\numberwithin{equation}{section}
\newcommand{\margnote}[1]{
\ifthenelse{\boolean{shownotes}}%
{\marginpar{\raggedright\tiny\texttt{#1}}}%
}
\newcommand{\hole}[1]{
\ifthenelse{\boolean{shownotes}}%
{\begin{center} \fbox{ \rule {.25cm}{0cm}
\rule[-.1cm]{0cm}{.4cm} \parbox{.85\textwidth}{\begin{center}
\texttt{#1}\end{center}} \rule {.25cm}{0cm}}\end{center}}
{}
}
\theoremstyle{plain}
\newtheorem{lemma}{Lemma}[section]
\newtheorem{theorem}[lemma]{Theorem}
\newtheorem{proposition}[lemma]{Proposition}
\newtheorem{corollary}[lemma]{Corollary}
\theoremstyle{definition}
\newtheorem{remark}[lemma]{Remark}
\newtheorem{definition}[lemma]{Definition}
\theoremstyle{remark}
\newcommand{\A}{\mathbb{A}}
\newcommand{\R}{\mathbb{R}}
\begin{document}

\title[Orbital Stability of Optical Solitons]{Orbital Stability of Optical Solitons in 2d}
\author[S. Moroni]{Sergio Moroni}
\address{{\rm (S. Moroni)} BCAM - Basque Center for Applied Mathematics, Alameda de Mazarredo 14, 48009, Bilbao, Bizkaia, Spain}
\email{smoroni@bcamath.org}

    \begin{abstract}
    We present a stability result for ground states of a Schrödinger-Poisson system in $(2+1)$ dimension, modelling the propagation of a light beam through a liquid crystal with nonlocal nonlinear response. The core of the proof is a coercivity bound on the second derivative of the action, where non scaling nonlinearities and the coupled system present the major difficulties. In addition we prove existence of a ground state with frequency $\sigma$ for any $\sigma \in (0,1)$ as a minimal point over an appropriate Nehari manifold.
    \end{abstract}
   \maketitle

\section*{Introduction}
Optical properties of nematic liquid crystals have received great attention in the last years, as they can support stationary optical waves, of large interest both in theory and in applications. Heuristically, when a light wave propagates through a nematic liquid crystal, its electric field induces a dipolar polarization in the anisotropic medium. The electromagnetic action of the dipoles cause a reorientation of the molecules in the liquid crystal, and hence a modification of the light refractive index of the material. Due to high susceptibility  of nematic liquid crystals, the response is nonlocal, meaning that has effects far beyond the region occupied by the light wave, and nonlinear. This response has a self-focusing effect on the light beam, supporting waveguides that counterbalance the diffraction spreading nature of light beam, and, in optimal shapes, allows the existence of stationary waves. \\The interested reader is referred to \cite{as1} or \cite{as2}, for a physical overview of the topic and a presentation of the main experiments in the field, or to \cite{lib}, Chapters 2 and 6, for a wider mathematical introduction.\bigskip
\\
In this paper we study the ground states, proving orbital stability, existence for any frequency $0<\sigma<1$ and a decay estimate, of the Schrödinger-Poisson system
\begin{align}
     &i\partial_z u + \frac{1}{2}\Delta u + u \sin (2\theta)=0  \label{eqcal}
 \\&   -\lambda \Delta \theta + q\sin (2\theta)= 2|u|^2 \cos(2\theta) \label{eqcal2} 
\end{align}
in dimension $(2+1)$. The axis $z$, referred to as the optical axis, is the direction of the propagation of a light beam, while $\Delta$ is the Laplacian in the transverse coordinates $(x,y)$.\\
The system models the propagation of a laser beam through a planar cell filled with a nematic liquid crystal, oriented by an external electric field $E$. Equation \eqref{eqcal} represents the evolution of the light beam, with $u:\R^2 \to \mathbb{C}$ the complex amplitude of the electric field, while \eqref{eqcal2} is the nonlocal response of the medium, with $\theta:\R^2 \to \R$ the director field angle of the light-induced reorientation. The values $q, \lambda$ are positive constants depending, respectively, on the intensity of the pre-tilting electric field and on the elastic response of the medium, that is on its property of nonlocality.
In \cite{Arg} a heuristic derivation of the equations is presented in the Appendix, while \cite{as1}, \cite{as2} and the references therein give a deeper understanding of the system and of the related observed phenomena.
\bigskip

The system was rigorously studied in \cite{Arg}, where the authors proved global existence and regularity for the Cauchy problem, and existence of stationary waves as minimizers, over couples $(u,\theta)$ with $L^2$ norm of $u$ fixed, of the Hamiltonian:
\begin{equation}
    \label{ene}
    E(u,\theta):= \frac{1}{4}\int_{\R^2} |\nabla u|^2 + \lambda |\nabla \theta|^2 - 2 |u|^2 \sin (2\theta) + q (1-\cos(2\theta)) \,dx
\end{equation}
A minimal configurations $(v,\phi)$ satisfy the equations 
\begin{align}
    -\Delta v& + 2\sigma v -2v\sin (2\phi)=0
    \label{sigma}
    \\ -\lambda \Delta \phi & +q \sin (2\phi) -2|v|^2 \cos(2\phi)=0 
\label{sig}
\end{align} 
where $\sigma\in \R$ is the Lagrange multiplier. The couple $(e^{i\sigma z}v(x,y), \phi(x,y))$ is then a stationary wave for the system \eqref{eqcal}-\eqref{eqcal2} as it evolves along the optic axis changing only by a phase shift of frequency $\sigma$. The paper does not state uniqueness for the ground state over the constraint.
\bigskip\\
We will present a first stability result for those stationary waves. Loosely speaking, a stationary wave is stable if the evolution through equations \eqref{eqcal}-\eqref{eqcal2} of an initial datum close to $(v, \phi)$ remains close to the orbit of the ground state. 
\\Proving stability or instability of stationary waves for a dynamical system has attracted considerable effort in the scientific community; we refer here the classical works \cite{StS}, \cite{W}, \cite{W2}, \cite{S} \cite{SS}, or the recent reports \cite{Rep1}, \cite{Rep2}. In particular, a stability result provides a strong justification for the relevance of
 the mathematical model to applications, as only locally stable solutions are expected to be seen in 
experiments and numerical simulations. 
\bigskip\\A possible path to a stability result is strictly variational. Ground states are often proved to exist as minimizers of the energy over a constraint, possibly  invariant with respect to the dynamic. The common difficulty in this passage revolves around getting \textit{at least one} minimizing sequence with enough compactness to guarantee convergence to a minimizer. On the other hand, if the variational method employed is robust enough to ensure convergence of \textit{any} minimizing sequence to a minimizer, stability follows readily. This approach was first conceived in \cite{sss}, and later used in many different settings (\cite{SP1}, \cite{SP2}, \cite{Luo}, \cite{SP4}, \cite{Din}). \\We point out that in the previous paper \cite{aaa}, for a simplified model of \eqref{eqcal}-\eqref{eqcal2} the authors were able to prove compactness with the required generality, and hence stability. This does not hold for solution of \eqref{sigma}-\eqref{sig}, as the variational method employed in \cite{Arg} improves compactness of a minimizing sequence through rearrangements. Similarly, orbital stability is not yet proved for ground states of a more complicated model, with higher order nonlinear effects taken into account, studied in \cite{Arg2}.
\bigskip\\A different stratagem adapts Lyapunov stability to Hamiltonian systems in Hilbert spaces. One defines a proper action $S$ which is preserved by the evolution law and with its first derivative nullifying in the ground state. Coercivity of the second derivative of the action then implies orbital stability. 
\\The implementation of this idea for stability of stationary states dates back to \cite{StS}, \cite{W}, \cite{W2}; and it was later extensively exploited: see \cite{DeB}, \cite{Ir}, \cite{Fuk1}, \cite{Kom}, \cite{Ad}, \cite{Jea}.
\bigskip\\The main goal of this paper is proving stability by the second approach. The reason for this is twofold. On the one hand, it is mathematically more challenging: we have to carry out an analysis of the spectrum of $S''$, which contains a number of difficulties. On the other, we regard this stability result as more complete. It provides a better description of the stable orbit, which the evolution of a perturbed datum remains close to. Moreover, gaining a deep understanding of the spectra of the linearized operator is necessary for studying stronger notions of stability.\bigskip
\\We give two definitions of orbital stability, which reflect the two different approaches, and state our main result.
\begin{definition}
    \label{defst}
Let $(v,\phi)$ be a ground state, solving \eqref{sigma}-\eqref{sig} for a certain $\sigma$. We say that it is orbitally stable if for every $\varepsilon>0$ there exists a $\delta>0$ such that  
 \begin{equation}
      \label{eqst}
    \|u_0 - v\|_{H^1} \le \delta \longrightarrow \sup_{t} \inf_{\substack{y\in\R^2, \alpha \in \R\\ (w, \psi) \in \mathcal{M}^\sigma}} \left \| (u,\theta)(t) - \left ( e^{i\alpha} w(\cdot-y), \psi(\cdot - y) \right) \right \|_{H^1\times H^1} <\varepsilon
  \end{equation}
  where $\left (u, \theta)(t)\right )$ is the solution of \eqref{eqcal}-\eqref{eqcal2} with initial condition $u_0$ and $\mathcal{M}^\sigma $ is defined as
  \begin{equation}
  \label{DefM}
      \mathcal{M^\sigma} := \left \{ (w,\psi) \in \left (H^1_{rad}\right )^2  \ \biggm| \begin{array}{l}
 2E(v,   \phi) + \sigma \int |v|^2= 2E(w,\psi) + \sigma \int |w|^2\\\
(w, \psi ) \ \mathrm{solve \ } \eqref{sigma}-\eqref{sig} \mathrm{ \ for \ the \ given \ } \sigma
\end{array}
     \right \} 
  \end{equation}
\end{definition}
We will prove in Section \ref{secstab} that the set $M^\sigma$ collects all the radial real valued ground states with the same frequency $\sigma$ and $L^2$ norm. Up to symmetries, it represents the orbit around which stability is proved. For the moment we stress that by our definition $\mathcal{M}^\sigma$ always contains at least a ground state.
\begin{definition}
    \label{defst2}
Let $(v,\phi)$ be a ground state over the constraint 
  \begin{equation*}
    S_a:= \left \{ (u, \theta) \in H^1 \times H^1 \ \ | \ \ \|u\|_{L^2}^2 = a \right \}
\end{equation*}We say that it is variationally stable if for every $\varepsilon>0$ there exists a $\delta>0$ such that  
 \begin{equation}
      \label{eqstv}
      \|u_0 - v\|_{H^1} \le \delta \longrightarrow \sup_{t} \inf_{(\tilde v, \tilde \phi) \in \Sigma_a} \left \| (u,\theta)(t) -  ( \tilde v, \tilde \phi) \right \|_{H^1\times H^1} <\varepsilon
  \end{equation}
  where $\Sigma_a$ is defined as
  \begin{equation}
      \Sigma_a := \left \{ (w,\psi) \in S_a \ | \ E(w, \psi ) = E(v, \phi) \right \}
  \end{equation}
\end{definition}
\begin{theorem}
\label{teost}
    Let $a_0$ be such that for any $a>a_0$, there exists a ground state $(v_a, \phi_a) $ over $S_a$. Then for almost every $a>a_0$ the ground state is orbitally stable.
\end{theorem}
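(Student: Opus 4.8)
The plan is to follow the classical Grillakis–Shatah–Strauss (GSS) scheme, adapted to the coupled Schrödinger–Poisson structure, and I would organize it around three ingredients: conservation laws, the structure of the action, and a coercivity estimate for the second variation. First I would introduce the action $S_\sigma(u,\theta) := E(u,\theta) + \tfrac{\sigma}{2}\int |u|^2\,dx$, whose critical points over $H^1\times H^1$ are exactly the solutions $(v,\phi)$ of \eqref{sigma}--\eqref{sig}; note that $\theta$ does not carry its own conserved charge, so the Poisson equation \eqref{eqcal2}/\eqref{sig} should be thought of as a constraint, and one may either work on the constraint manifold $\{\theta = \Theta[u]\}$ (solving \eqref{eqcal2} for $\theta$ in terms of $u$, which is uniquely solvable since the left side is strictly monotone in $\theta$ for $|2\theta|<\pi/2$) and reduce to an effective action $\mathcal{S}_\sigma(u) := S_\sigma(u,\Theta[u])$ depending on $u$ alone, or keep the system and exploit that along solutions the difference between $\theta(t)$ and $\Theta[u(t)]$ is controlled. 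The reduction route is cleaner: $\mathcal{S}_\sigma$ is a functional of $u$ only, conserved by the flow (up to the charge $\|u\|_{L^2}^2$, also conserved), and its Hessian at $v$ is the operator whose spectral analysis is advertised in the abstract as the core of the paper.

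Next I would set up the standard GSS trichotomy via the scalar function $d(\sigma) := \mathcal{S}_\sigma(v_\sigma) = E(v_\sigma,\phi_\sigma) + \tfrac{\sigma}{2}\|v_\sigma\|_{L^2}^2$, where $(v_\sigma,\phi_\sigma)$ is the ground state of frequency $\sigma$. By the envelope/Lagrange relation one has $d'(\sigma) = \tfrac12 \|v_\sigma\|_{L^2}^2 = \tfrac12 a$, so the map $\sigma \mapsto a(\sigma)$ and the map $a \mapsto \sigma(a)$ are inverse to each other (on the relevant range), and the convexity of $d$ corresponds exactly to $d''(\sigma) = \tfrac12 \tfrac{d}{d\sigma}\|v_\sigma\|_{L^2}^2 > 0$, i.e. to $a$ being strictly increasing in $\sigma$. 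Since $d$ is a convex-or-not scalar function of one variable, it is differentiable, hence twice differentiable, for almost every $\sigma$ (monotone functions are a.e. differentiable, and a locally Lipschitz $d'$ would give it everywhere; in any case $d$ is convex along the branch if the slope condition holds on a set of full measure). This is precisely where the "almost every $a$" in Theorem~\ref{teost} comes from: for a.e.\ $a > a_0$, the frequency $\sigma = \sigma(a)$ is a point where $d$ is twice differentiable and (one shows) $d''(\sigma) > 0$, the GSS slope condition. I would phrase the a.e.\ claim through: $\sigma \mapsto a(\sigma)$ is monotone (this itself needs an argument, or one passes through the a.e.\ differentiability of the convex minorant), hence differentiable a.e., hence the Vakhitov–Kolokolov-type condition $\partial_\sigma a > 0$ holds a.e., which is exactly the hypothesis of the abstract stability theorem.

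With the slope condition in hand for a.e.\ $a$, I would invoke the coercivity of the second variation — the technical heart of the paper, to be established in the section on $S''$ — in the form: there is $c>0$ with $\langle \mathcal{S}_\sigma''(v_\sigma)\xi,\xi\rangle \ge c\|\xi\|_{H^1}^2$ for all $\xi$ in the subspace orthogonal to the three-dimensional "trivial" kernel directions (generated by the phase symmetry $iv$ and the two translations $\partial_x v,\partial_y v$) and to the charge direction $v$ itself (the constraint $\|u\|_{L^2}^2 = a$). Given this, the standard Lyapunov argument closes the proof: suppose $(v_\sigma,\phi_\sigma)$ were not orbitally stable; take a sequence of initial data converging to $v_\sigma$ in $H^1$ whose solutions leave an $\varepsilon$-tube; by conservation of $\mathcal{S}_\sigma$ and of the charge, and by choosing at each time the optimal phase $\alpha(t)$ and shift $y(t)$ (a modulation/implicit-function argument using that the tube is thin), the perturbation $\xi(t) = e^{-i\alpha(t)}u(t,\cdot+y(t)) - v_\sigma$ stays in (a small neighborhood of) the good subspace, so coercivity forces $\|\xi(t)\|_{H^1}^2 \lesssim \mathcal{S}_\sigma(u_0) - \mathcal{S}_\sigma(v_\sigma) + |{\rm charge\ defect}| \to 0$, contradicting that the solution exits the tube. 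Finally I would translate back from the reduced functional to the full system: because $\Theta[\cdot]$ is Lipschitz from $H^1 \to H^1$ (again by monotonicity and elliptic regularity for \eqref{eqcal2}), closeness of $u(t)$ to the $u$-orbit upgrades to closeness of $(u(t),\theta(t))$ to $\mathcal{M}^\sigma$, which via the relation between $\mathcal{M}^\sigma$ and $\Sigma_a$ proved earlier gives \eqref{eqstv}.

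The main obstacle — and I expect the bulk of the paper to be spent here, as the abstract signals — is the coercivity of $\mathcal{S}_\sigma''(v_\sigma)$ on the good subspace. Two features make it hard: the nonlinearity $\sin(2\theta)$ is not homogeneous, so the usual Pohozaev/scaling identities that pin down the Morse index of the scalar NLS linearized operator are unavailable and must be replaced by monotonicity-in-$\sigma$ arguments plus comparison; and the coupling means $\mathcal{S}_\sigma''$ carries the extra term coming from differentiating $\Theta[u]$, a nonlocal positive-type operator, so one must show this correction does not destroy the negative-direction count. Concretely one needs: (i) the linearized operator $L_+ := -\Delta + 2\sigma - 2\sin(2\phi) + (\text{nonlocal coupling correction})$ has exactly one negative eigenvalue and kernel spanned by $\partial_x v,\partial_y v$; (ii) $L_- := -\Delta + 2\sigma - 2\sin(2\phi)$ is nonnegative with kernel $\mathrm{span}\{v\}$; (iii) the single negative direction of $L_+$ is killed by the charge constraint precisely when $d''(\sigma) = \partial_\sigma a > 0$. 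Step (i), controlling the nonlocal correction's effect on the spectrum (e.g.\ via the fact that it is a rank-structured or sign-definite perturbation, or by a continuity/deformation argument in a parameter), is the crux; everything else is, modulo care, the GSS machine.
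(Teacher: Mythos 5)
Your overall scheme (action, conservation laws, coercivity of the second variation, Lyapunov/modulation argument) is the same family as the paper's, but two steps as you state them do not close, and they are precisely the points where the paper deviates from the classical GSS machine. First, the a.e.\ claim: you derive it from ``$\sigma\mapsto a(\sigma)$ is monotone, hence differentiable a.e., hence $\partial_\sigma a>0$ a.e.'' Monotonicity gives a.e.\ existence and \emph{finiteness} of the derivative, not strict positivity (a strictly increasing function can have vanishing derivative a.e.), and convexity of $d$ is not available to invoke Alexandrov-type second differentiability. So the Vakhitov--Kolokolov condition $d''(\sigma)>0$ cannot be extracted a.e.\ from monotonicity, and your route through it stalls. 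The paper's fix is structural: it proves that $a\mapsto\sigma(a)$ is strictly increasing (by an energy comparison between $J_{a_1}$ and $J_{a_2}$), deduces only that the upper Dini derivative $\bar D^+\sigma(a)$ is finite for a.e.\ $a$, and then redesigns the coercivity argument so that this \emph{weaker} condition suffices: assuming a null direction $(\eta,\theta)$ with $\mathcal{L}_1(\eta,\theta)^t=(v,0)^t$, it produces a test configuration whose third-order Taylor term forces $J_{a+\varepsilon^2}\le J_a-\tfrac{\varepsilon^2}{2}\sigma(a)-c\varepsilon^3+o(\varepsilon^3)$, contradicting the lower bound $J_{a+\varepsilon^2}\ge J_a-\tfrac{\varepsilon^2}{2}\sigma(a)+O(\varepsilon^4)$ that follows from $\bar D^+\sigma(a)<\infty$. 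The contradiction lives at order $\varepsilon^3$, so no strict slope condition is ever needed.

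Second, your spectral input (i)--(iii) presupposes nondegeneracy: kernel of the linearized operator spanned by $\partial_x v,\partial_y v$ (plus $iv$), and a one-negative-eigenvalue count. Neither is known here, because uniqueness of the ground state is open and the trigonometric, non-scaling nonlinearity blocks the usual uniqueness/Morse-index arguments. The paper does not count negative eigenvalues at all: nonnegativity of $\mathcal{L}_1$ on $\{(\eta,\theta):(\eta,v)_{L^2}=0\}$ is obtained directly from constrained minimality over $S_a$, and the radial part of the kernel is \emph{not} shown to be trivial; instead it is identified (via the implicit function theorem) with the tangent space of the manifold $\mathcal{M}^\sigma$ of all ground states with the same frequency and action, the stable orbit is enlarged to include all of $\mathcal{M}^\sigma$, and a compactness result for $\mathcal{M}^\sigma$ in $H^2\times H^2$ makes the coercivity constant uniform so that modulation over $\mathcal{M}^\sigma$ is legitimate. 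Without this enlargement your modulation step has nothing to project away the possible extra radial kernel directions, so the coercivity you invoke may simply be false on your ``good subspace.'' (A smaller caveat: the Lipschitz property of $u\mapsto\Theta(u)$ from $H^1$ to $H^1$ is not a soft monotonicity fact; the paper needs a dedicated argument, with a constant depending on the fixed radial $u$, to remove the $L^\infty$ dependence in the known estimate.)
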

The work of \cite{Arg} assures the existence of $a_0$ as in the Theorem (see Theorem \ref{min} below). 
\\Once we can get strict coercivity of the bilinear form associated to $S''$ evaluated at a ground state, the proof is a simple adaptation of  \cite{StS}, or \cite{Fuk}. The difficulties are hence concentrated on this bound, stated in Proposition \ref{S''}.
\\$S''\ge 0$ is direct, as $(v,\phi)$ is a minimizer over $S_a$, up to natural restrictions linked to the presence of the constraint. Improving this to a coercivity control contains two major difficulties. 
\bigskip \\
The first difficulty comes from the Kernel of $S''$, as coercivity is clearly false for $0$-eigenvalues of the operator. We would like to prove that the Kernel of $S''$ is spanned by some "symmetry generators", which can be neglected by proper modulation. Invariances of the system guarantee that generators of translation and complex phase shift are in the Kernel, and their modulation is well understood.\\
This argument leaves uncovered the radial, real valued part of the Kernel. In the literature, nonexistence of similar eigenfunctions is often proved via uniqueness for the ground state equation. The reader is specifically referred to \cite{W}, Appendix A, for a clear exposition of the implication, or to \cite{DeB}, where  uniqueness is used in combination with topological argument on the Morse degree. We stress that in \cite{Jea}, \cite{Fuk1}, \cite{Kom} the authors prove stability with the same scheme, in a context where uniqueness is not known. Still, their strategy relies indirectly on it, as they consider frequency $\sigma$ for which the ground state, up to rescaling, converges to the unique and nondegenerate stationary wave of a simpler equation. 
\bigskip 
\\The question of uniqueness for radial positive ground state equations is classical and has received significant attention.
 Without claiming to be exhaustive, we direct the reader to \cite{Cof}, \cite{K}, where oscillation theory for ODE's is the key ingredient, or the more modern \cite{Y}, \cite{SW1}, \cite{SW2}, which rely on more intricate identities satisfied by the solutions. \\
 We are not able to prove uniqueness for \eqref{sigma}-\eqref{sig}. The method recalled above do not easily adapt to our case: oscillation theory is particular hard in the case of a system, and the trigonometric nonlinearities make all the delicate equalities far more complicated. Moreover uniqueness does not follow easily by the minimization problem, since the energy $E$ is not convex.
\\To overcome this problem, we consider in Definition \ref{defst} a larger orbit with all the ground states with the same frequency $\sigma$. If uniqueness held, the set would coincide with the ground state only; but where it is not known, it maintains the implicit definition above. 
\bigskip \\The second major difficulty concerns the coercivity bound. The information available at this moment for the self-adjoint operator $S''$ (namely, non-negativity and orthogonality to the kernel) is insufficient.\\
Loosely speaking, coercivity of the action serves as a selection criterion for the frequency of the dynamical orbit of a perturbed initial datum. In a number of work (see \cite{W2}, \cite{FA}, \cite{FA0}, \cite{StS}, \cite{New}, \cite{KT}), the condition for stability can be expressed as
\begin{equation}
\label{dp}
\frac{d}{d\sigma} \|\varphi_\sigma\|_{L^2}^2 > 0,
\end{equation}
where $\varphi_\sigma$ denotes the ground state corresponding to frequency $\sigma$. This formulation is essentially equivalent to the coercivity of $S''$, but better clarifies the heuristic of this step. The positivity of this derivative naturally indicates stability, while a negative value has been used as a criterion for instability (see \cite{FA0}, \cite{FA}, \cite{FukO}, \cite{OO}, \cite{KT}).  
\bigskip\\
In the stable case, the strict inequality ensures that the evolution of a perturbed datum selects the orbit associated with the initial ground state, without any change in frequency. By conservation laws, a solution originating from an initial datum close to $\varphi_\sigma$ preserves both the energy and the $L^2$ norm, remaining close to those of $\varphi_\sigma$. Heuristically, a zero derivative in place of \eqref{dp} implies existence of ground states with different frequencies but almost identical $L^2$ norms; in this case, conservation of flow-invariant quantities does not single out a specific $\sigma$. In \cite{W2}, \cite{WC}, the author investigates stability for critical power nonlinearities, which correspond to the $0$ derivative case. The desired coercivity is recovered by adding to the stable orbit dilations of the ground state, which do not preserve the frequency.\bigskip
\\
The condition \eqref{dp} is often checked through a Pohozaev identity and scaling properties of the equation, which allows to explicitly calculate the derivative ( see \cite{New}, \cite{FA}, \cite{W2}, or \cite{KT} for the case of double power nonlinearity). In our situation, lack of an easy scaling for the nonlinearities means less useful integral identities. Still, energy comparison implies a monotone dependence, and with some modifications to the proof of \cite{W2}, we can implement this weaker condition in the spectral analysis. Stability result follows for almost every value of the charge constraint $a$, reflecting the a.e. differentiability of a monotone function.
\bigskip\\The result of Theorem \ref{teost} is weaker than more standard stability results, due to the presence of the implicit $\mathcal{M}^\sigma$ and the a.e. limitation on $a$. Nevertheless, we consider it the major novelty of the paper as the strategies for the specific difficulties listed above are new; moreover to the best of our knowledge it is the first spectral stability result for a Schrödinger-Poisson system. The interested reader can compare with other proofs for coupled systems present in the literature: \cite{SP1}, \cite{SP2}, \cite{SP3}, \cite{SP4}, \cite{D2}.
\bigskip\\ 
To complete the stability study, we  prove the following result, covering also the missing ground states.
\begin{proposition}
Let $a_0$ be as in Theorem \ref{teost}. For any $a>a_0$ the ground state over $S_a$ are variationally stable.     
\label{teost2}
\end{proposition}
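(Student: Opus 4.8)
The plan is to prove Proposition~\ref{teost2} by the concentration--compactness method of Cazenave--Lions \cite{sss}, which is designed for exactly this kind of statement: it produces orbital stability relative to the \emph{whole} set of energy minimizers, as demanded by Definition~\ref{defst2}. First I would reduce everything to the $u$--variable. For fixed $u\in H^1$ the functional $\theta\mapsto E(u,\theta)$ has a minimizer $\Theta[u]$, the unique solution of \eqref{eqcal2} with values in the admissible range; $\Theta[u]$ depends continuously (indeed locally Lipschitz-ly) on $u$ in $H^1$ and is the director field of every energy minimizer (this is part of the study of \eqref{eqcal2} in \cite{Arg}). Hence $m_a:=\inf_{S_a}E=\inf\{E(u,\Theta[u]):\|u\|_{L^2}^2=a\}$, and along the flow of \eqref{eqcal}--\eqref{eqcal2} one has $\theta(t)=\Theta[u(t)]$ automatically. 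The proposition then follows from the compactness property: for every $a>a_0$, any sequence $u_n$ with $\|u_n\|_{L^2}^2=a$ and $E(u_n,\Theta[u_n])\to m_a$ has, after suitable translations and along a subsequence, a strong $H^1$ limit $u$, necessarily with $(u,\Theta[u])\in\Sigma_a$. Granting this, \eqref{eqstv} follows by the usual contradiction argument: if it failed at some $a>a_0$ there would be data $u_0^n\to v_a$ in $H^1$ and times $t_n$ with $\mathrm{dist}_{H^1\times H^1}\big((u^n,\theta^n)(t_n),\Sigma_a\big)\ge\varepsilon_0$; by the global well-posedness and conservation of mass and of \eqref{ene} from \cite{Arg}, together with the elementary bound $\big|\int_{\R^2}|u|^2\sin2\theta\big|\le\|u\|_{L^2}^2$ (which already makes $E$ coercive in $u$ on each $S_a$), the functions $u^n(t)$ are bounded in $H^1$ uniformly in $n$ and $t$, $\|u^n(t_n)\|_{L^2}^2\to a$, and $E(u^n(t_n),\Theta[u^n(t_n)])\to E(v_a,\phi_a)=m_a$; renormalizing the mass by the factor $\sqrt a/\|u_0^n\|_{L^2}\to1$ (an $H^1$--perturbation of size $o(1)$, uniform in $n$) yields a minimizing sequence on $S_a$, and the compactness property contradicts $\varepsilon_0$.

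For the compactness property I would run Lions' trichotomy on $\rho_n=|u_n|^2$. Boundedness of $(u_n,\Theta[u_n])$ in $H^1\times H^1$ is the coercivity just recalled. Vanishing is ruled out because $m_a<0$: if $\rho_n$ vanished, $u_n\to0$ in $L^3(\R^2)$, so $\big|\int_{\R^2}|u_n|^2\sin2\Theta[u_n]\big|\le\|u_n\|_{L^3}^2\|\sin2\Theta[u_n]\|_{L^3}\to0$ and hence $\liminf E(u_n,\Theta[u_n])\ge0$. Dichotomy is ruled out by the strict subadditivity
\[
 m_a<m_{a_1}+m_{a_2}\qquad\text{for all }a_1,a_2>0,\ a_1+a_2=a,
\]
where for $b\le a_0$ the value $m_b$ denotes the (non-attained) infimum, equal to $0$. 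In the surviving compact case, a translation $y_n$ makes $u_n(\cdot-y_n)$ tight; along a subsequence $u_n(\cdot-y_n)\rightharpoonup u$ in $H^1$ and strongly in every $L^p$, $2\le p<\infty$, so $\|u\|_{L^2}^2=a$ and, by continuity of the Poisson solver, $\Theta[u_n(\cdot-y_n)]\to\Theta[u]$ in $H^1$. Weak lower semicontinuity of $\|\nabla\cdot\|_{L^2}^2$ then forces $E(u,\Theta[u])\le\liminf E(u_n,\Theta[u_n])=m_a$, hence $E(u,\Theta[u])=m_a$ and $\|\nabla u_n(\cdot-y_n)\|_{L^2}\to\|\nabla u\|_{L^2}$, so $u_n(\cdot-y_n)\to u$ strongly in $H^1$; since $\Sigma_a$ is translation invariant, $(u,\Theta[u])\in\Sigma_a$.

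The heart of the argument is the strict subadditivity, and this is where the absence of scaling makes itself felt. The idea is that, precisely because \cite{Arg} provides a ground state $(v_a,\phi_a)$ at \emph{every} charge $a>a_0$ (Theorem~\ref{min}), for $\beta>1$ one can use $(\sqrt\beta\,v_a,\phi_a)\in S_{\beta a}$ as a competitor, and a direct computation from \eqref{ene} gives
\[
 m_{\beta a}\le E(\sqrt\beta\,v_a,\phi_a)=\beta\,E(v_a,\phi_a)+\frac{1-\beta}{4}\int_{\R^2}\lambda|\nabla\phi_a|^2+q\,(1-\cos2\phi_a)\,dx<\beta\,m_a,
\]
the strict inequality because $1-\beta<0$ and $\phi_a\not\equiv0$ (else \eqref{sig} forces $v_a\equiv0$, contradicting $\|v_a\|_{L^2}^2=a$). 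Thus $m_{\beta a}<\beta\,m_a$ for all $\beta>1$ and $a>a_0$; in particular $m$ is negative and strictly decreasing on $(a_0,\infty)$. Strict subadditivity then follows by the standard case analysis for $0<a_1\le a_2$ with $a_1+a_2=a$: for $a_1=a_2$ it is the case $\beta=2$; for $a_0<a_1<a_2$ one chains $m_a=m_{(a/a_2)a_2}<(a/a_2)m_{a_2}=m_{a_2}+(a_1/a_2)m_{a_2}<m_{a_2}+m_{a_1}$, using $m_{a_2}=m_{(a_2/a_1)a_1}<(a_2/a_1)m_{a_1}$; if $a_1\le a_0<a_2$ then $m_{a_1}=0$ and $m_a<m_{a_2}$ by strict monotonicity since $a_0<a_2<a$; and if $a_1,a_2\le a_0$ the right--hand side is $0>m_a$.

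The main obstacle is exactly this subadditivity estimate. With the trigonometric, non-homogeneous nonlinearities there is no identity $m_a=a^\gamma m_1$ to fall back on, and the device above is available only because minimizers exist at every supercritical charge, whose ``frequency'' contributions scale differently from the charge rescaling of $u$; making this, and the intermediate regime $b\le a_0$, watertight is the delicate part. The remaining ingredients --- the sign $m_a<0$ (and $m_b=0$ for $b\le a_0$), the coercivity and conservation laws, and the uniqueness and continuity of $\Theta$ --- are standard, if somewhat lengthy, and are essentially contained in \cite{Arg}. Unlike the coercivity bound of Proposition~\ref{S''}, no spectral information about the linearization enters here, which is why the conclusion holds for every $a>a_0$ rather than almost every one: no differentiability of $a\mapsto m_a$ is invoked.
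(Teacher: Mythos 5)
Your overall strategy is the one the paper actually uses: reduce to the $u$-variable through the minimizing angle of Lemma \ref{minan}, run the Cazenave--Lions scheme, exclude vanishing via $J_a<0$, and obtain strict subadditivity from the mass-rescaled competitor $\sqrt{\beta}\,v_a$ with the fixed angle $\phi_a$ (your identity $E(\sqrt\beta v_a,\phi_a)=\beta E(v_a,\phi_a)+(1-\beta)E^+(\phi_a)$ is exactly the paper's comparison $J_b< \tfrac{b}{a}J_a$, written with the decomposition \eqref{ende}); your handling of the sub-threshold masses $b\le a_0$ via $J_b=0$ is also fine. The compactness case and the final contradiction argument likewise match the paper.

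However, there is a genuine gap in the dichotomy step. You assert that ``dichotomy is ruled out by the strict subadditivity,'' but strict subadditivity of $a\mapsto J_a$ is only half of that argument: you still must show that a dichotomizing minimizing sequence satisfies
\begin{equation*}
E\bigl(u_n,\Theta[u_n]\bigr)\;\ge\;E\bigl(u_n^1,\Theta[u_n^1]\bigr)+E\bigl(u_n^2,\Theta[u_n^2]\bigr)+o(1),
\end{equation*}
and this is not automatic here because the functional is \emph{nonlocal} in $u$: the angle $\Theta[u_n^1+u_n^2]$ solves \eqref{eqcal2} driven by both bumps, so $\Theta[u_n^1+u_n^2]\neq\Theta[u_n^1]+\Theta[u_n^2]$, the term $E^+(\theta)$ does not decompose, and on the support of $u_n^1$ the coupling term involves an angle partly generated by the far-away bump $u_n^2$. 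This is precisely why the paper proves a dedicated lemma before Proposition \ref{teost2}: for $\theta_n=\Theta[u_n^1+u_n^2]$, one picks (by pigeonholing on the $H^1$-bound \eqref{0an}) an annulus of unit width between the two supports on which $\|\theta_n\|_{H^1}$ is small, cuts $\theta_n$ there with a smooth cutoff $\chi_n$, uses that the $u$-dependent part of the energy is local (so it splits exactly, since $\chi_n\theta_n=\theta_n$ on $\mathrm{supp}\,u_n^1$, etc.), estimates the cutting error in $E^+$ by $C\|\theta_n\|_{H^1(\mathrm{annulus})}$, and finally invokes the minimality of $\Theta[u_n^i]$ from Lemma \ref{minan} to replace $\chi_n\theta_n$ and $(1-\chi_n)\theta_n$ by the optimal angles. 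Without this decoupling argument (or an equivalent one), the passage from dichotomy to the lower bound $\liminf E\ge J_b+J_{a-b}$ is unjustified, so you should add it; the remaining ingredients of your proposal are sound and coincide with the paper's proof.
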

The result is an adaptation of the concentration-compactness method to a coupled system, following \cite{aaa}. \bigskip \\
In the third result of our paper, we prove the existence of a stationary wave for the system for any frequency value $\sigma \in (0,1)$:
\begin{theorem}
\label{teosig}
    For any $0<\sigma<1$ there exists a radially symmetric and decreasing stationary wave $(v_\sigma,\phi_\sigma)\in H^1_{rad}\times H^1_{rad}$ that is a solution of \eqref{sigma}-\eqref{sig} with the fixed value of $\sigma$.
\end{theorem}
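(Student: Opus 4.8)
The plan is to produce $(v_\sigma,\phi_\sigma)$ as a minimiser of the action $S_\sigma(v,\phi):=2E(v,\phi)+\sigma\int_{\R^2}|v|^2$ on a Nehari-type manifold, after eliminating $\phi$. First I would set up the reduction of the second equation. For fixed $v\in H^1_{rad}$, \eqref{sig} has a unique solution $\phi=\phi[v]\in H^1_{rad}$ with $0\le\phi[v]\le\pi/4$: a maximum-principle argument shows that at an interior maximum $\phi_0$ one has $q\sin 2\phi_0\le 2v^2\cos 2\phi_0$, which is impossible for $\phi_0\in(\pi/4,\pi/2)$, nonnegativity being analogous, and $\phi[v]\to0$ at infinity forces exponential decay, hence $\phi[v]\in H^1$. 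On the range $[0,\pi/4]$ the linearisation $-\lambda\Delta+2q\cos 2\phi[v]+4v^2\sin 2\phi[v]$ of \eqref{sig} is a strictly positive operator, so $v\mapsto\phi[v]$ is $C^1$ (implicit function theorem), and the comparison principle gives that it is monotone (larger $|v|^2$ yields larger $\phi[v]$). Consequently $J_\sigma(v):=S_\sigma(v,\phi[v])$ is a $C^1$ functional on $H^1_{rad}$, and since $\phi[v]$ solves \eqref{sig} one has $J_\sigma'(v)=\partial_vS_\sigma(v,\phi[v])$; thus critical points of $J_\sigma$ are exactly the $v$-components of radial solutions of \eqref{sigma}-\eqref{sig}.

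Next comes the fibering analysis. Along a ray $t\mapsto tv$ one computes $\tfrac{d}{dt}J_\sigma(tv)=\tfrac1t\langle J_\sigma'(tv),tv\rangle=\tfrac1t N(tv)$ with $N(v):=\|\nabla v\|_{L^2}^2-2\int v^2(\sin 2\phi[v]-\sigma)$, and $N(tv)=t^2 g_v(t)$ where $g_v(t)=\|\nabla v\|_{L^2}^2-2\int v^2(\sin 2\phi[tv]-\sigma)$. Differentiating \eqref{sig} in $t$ shows $\partial_t\phi[tv]\ge0$, whence $g_v$ is strictly decreasing on $(0,\infty)$. Since $g_v(0^+)=\|\nabla v\|_{L^2}^2+2\sigma\|v\|_{L^2}^2>0$ and, using $\phi[tv]\to\pi/4$ on $\{v\ne0\}$ together with dominated convergence, $g_v(+\infty)=\|\nabla v\|_{L^2}^2-2(1-\sigma)\|v\|_{L^2}^2$, for any $\sigma\in(0,1)$ and any $v$ spread out enough that $\|\nabla v\|_{L^2}^2<2(1-\sigma)\|v\|_{L^2}^2$ (possible by dilation in $\R^2$) the ray meets the Nehari manifold $\mathcal{N}_\sigma:=\{v\in H^1_{rad}\setminus\{0\}:N(v)=0\}$ in a unique point, which is the maximum of $J_\sigma$ along that ray; for $\sigma\ge1$ one has $g_v>0$ identically and $\mathcal{N}_\sigma=\varnothing$, matching the Pohozaev obstruction to existence above the threshold. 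On $\mathcal{N}_\sigma$ the relation $\int v^2\sin 2\phi[v]=\tfrac12\|\nabla v\|_{L^2}^2+\sigma\|v\|_{L^2}^2$ gives the clean identity $J_\sigma(v)=\tfrac\lambda2\|\nabla\phi[v]\|_{L^2}^2+\tfrac q2\int(1-\cos 2\phi[v])\ge0$; combining $1-\cos 2\phi\gtrsim\phi^2$ on $[0,\pi/4]$, the two-dimensional Gagliardo–Nirenberg inequality and the Nehari relation rules out $\phi[v]\to0$ and $v\to0$ along near-minimising sequences, so $m_\sigma:=\inf_{\mathcal{N}_\sigma}J_\sigma>0$ and $\inf_{\mathcal{N}_\sigma}\|v\|_{H^1}>0$.

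Finally, minimisation and compactness. Take $v_n\in\mathcal{N}_\sigma$ with $J_\sigma(v_n)\to m_\sigma$; since $\phi[\,\cdot\,]$ depends only on $|v|^2$ and the coupled term is improved by symmetrisation, one may — as in \cite{Arg} — replace $v_n$ by the Nehari point on the ray through its decreasing rearrangement without increasing $J_\sigma$, so the $v_n$ may be taken radially decreasing. The main obstacle is $H^1$-boundedness of $(v_n,\phi[v_n])$: the coercive part of $J_\sigma$ on $\mathcal{N}_\sigma$ bounds $\|\phi[v_n]\|_{H^1}$ but \emph{not} $\|v_n\|_{L^2}$ directly. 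To close the estimate one must exploit \eqref{sig} tested against $\phi[v_n]$, the bound on $\int(1-\cos 2\phi[v_n])$ (hence on the measure of $\{\phi[v_n]\ge\delta\}$), and the monotone profile of $v_n$, to show that the attractive integral $\int v_n^2\sin 2\phi[v_n]$ exceeds $\sigma\|v_n\|_{L^2}^2$ only by a bounded amount, which through the Nehari relation $\|\nabla v_n\|_{L^2}^2+2\sigma\|v_n\|_{L^2}^2=2\int v_n^2\sin 2\phi[v_n]$ bounds both norms; I expect this step to be the genuinely delicate point of the proof. Once boundedness is in hand, the compact embedding $H^1_{rad}(\R^2)\hookrightarrow L^p(\R^2)$ for $2<p<\infty$ lets one pass to the limit in the (super-quadratic, hence subcritically compact) nonlinear terms and in the continuous map $v\mapsto\phi[v]$, and weak lower semicontinuity of the $H^1$ seminorms together with the fibering structure (each ray has a single Nehari point, its maximum) forces the weak limit $v_\sigma$ to lie in $\mathcal{N}_\sigma$ with $J_\sigma(v_\sigma)=m_\sigma$ and $\nabla v_n\to\nabla v_\sigma$ strongly. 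Then $v_\sigma$ is a constrained minimiser; the Lagrange multiplier vanishes because the constraint is transversal, $\langle N'(v_\sigma),v_\sigma\rangle=-4\int v_\sigma^2\cos 2\phi[v_\sigma]\,\partial_t\phi[tv_\sigma]\big|_{t=1}<0$ by the positivity and monotonicity of $\phi[\,\cdot\,]$, so $J_\sigma'(v_\sigma)=0$ and $(v_\sigma,\phi_\sigma):=(v_\sigma,\phi[v_\sigma])$ solves \eqref{sigma}-\eqref{sig} for the prescribed $\sigma$. Elliptic bootstrap, the strong maximum principle, and radial rearrangement (respectively moving-plane arguments, as in \cite{Arg}) upgrade the minimiser to a positive, radially symmetric and decreasing solution, which is the assertion of Theorem \ref{teosig}.
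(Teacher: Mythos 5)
Your overall route is the same as the paper's (eliminate the angle by solving \eqref{sig} for fixed $v$, minimise over a Nehari manifold in $H^1_{rad}$, use the compact radial embeddings, and show the constrained minimiser is a free critical point), but the step you yourself flag as ``the genuinely delicate point'' — $H^1$-boundedness of a minimising sequence — is precisely where the proposal has a genuine gap, and the estimate you sketch cannot close it. First, a small but real error: bounding $\int v_n^2\sin 2\phi[v_n]-\sigma\|v_n\|_{L^2}^2$ and inserting it into the Nehari relation bounds only $\|\nabla v_n\|_{L^2}$, not ``both norms'' (an extra Gagliardo--Nirenberg step using the $H^1$ bound on $\phi[v_n]$ is needed to recover the $L^2$ bound, and you do not give it). More seriously, on $\mathcal{N}_\sigma$ one has $\int v^2\sin2\phi[v]-\sigma\|v\|_{L^2}^2=\tfrac12\|\nabla v\|_{L^2}^2$, so your claim is exactly the gradient bound you are trying to prove, and the ingredients you propose (testing \eqref{sig} against $\phi[v_n]$, the bound on $\int(1-\cos2\phi[v_n])$ and hence on $|\{\phi[v_n]\ge\delta\}|$, the monotone profile) use near-minimality only through the bound on $E^+$. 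That is not enough: there exist sequences on $\mathcal{N}_\sigma$ with bounded $E^+$ and $\|v_n\|_{H^1}\to\infty$, e.g.\ directions supported in a fixed ball approaching the threshold $\|\nabla w\|_{L^2}^2=2(1-\sigma)\|w\|_{L^2}^2$, whose Nehari points blow up while $\Theta\simeq\pi/4$ only on that fixed ball, so $E^+$ stays bounded. Ruling out this scenario requires using minimality through an energy comparison, not a local estimate. This is exactly what the paper does: assuming $\|u_n\|\to\infty$ it normalises, shows the weak limit $v$ is nonzero with compact support and $\phi\equiv\pi/4$ on $\operatorname{supp}v$, and then builds a competitor from the symmetric decreasing rearrangement rescaled as $v_s(x)=v^*(sx)$, $s<1$, whose Nehari point has $E^+$ strictly below the infimum — a contradiction. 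That construction is the heart of the proof and is absent from your proposal.

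Two secondary points. (i) You lean repeatedly on $C^1$-dependence of $v\mapsto\phi[v]$ (for the fibering derivative, the strict monotonicity of $g_v$, and the transversality/Lagrange-multiplier step). The paper explicitly does not know whether this map is $C^1$ and works around it: it proves directly that $E_\sigma$ is $C^1$ using the Lipschitz-type estimate \eqref{scad} (whose constant depends on $u$ through Lemmas \ref{lat}--\ref{leman}), and replaces the smooth-constraint argument by the Szulkin--Weth homeomorphism $m:S^\sigma\to\mathcal{N}_\sigma$ plus Ekeland's principle. Your implicit-function-theorem claim is plausible (the linearisation $-\lambda\Delta+2q\cos2\phi+4v^2\sin2\phi$ has essential spectrum in $[2q,\infty)$ and a strictly positive form), but you would have to prove it — including the choice of spaces and the identification of the IFT branch with the canonical solution $0\le\phi\le\pi/4$ — since nothing in the cited literature provides it. (ii) The claim that a minimising sequence may be replaced by the Nehari points over decreasing rearrangements ``without increasing $J_\sigma$'' is also only asserted; the paper avoids it by working in $H^1_{rad}$ throughout and recovering monotonicity of the minimiser at the very end by a rearrangement-plus-rescaling comparison of the same type as in its unboundedness argument.
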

The Theorem states that the models \eqref{eqcal}-\eqref{eqcal2} provides the existence of a stationary wave with frequency $\sigma $ for any $0<\sigma <1$. Providing a relevant physical meaning for this family of stationary waves goes beyond the purpose of this article; specifically, this paper does not contain a stability result for them. Still, the parameter $\sigma$ appears as physically relevant in Theorem \ref{min}, and it is mathematically interesting to prove existence or non existence results for singular solutions with respect to a moving parameter.  
\bigskip \\
The idea of the proof is to get the existence of the stationary wave as minimum points for the energy. This cannot be done directly with the energy $E$ defined in \eqref{ene}, as it is easily seen to be unbounded below. \\In Theorem \ref{min} from \cite{Arg}, the authors get the existence of a minimum looking to the constrained problem with $L^2$ norm fixed. On the one hand this leads quite naturally to the existence of a minimizer, but on the other $\sigma$ has the role of a Lagrangian multiplier and there is no possibility to control its value a priori.
\bigskip\\We consider a modified energy with a term depending on $\sigma$, and minimize the functional over its Nehari manifold. The idea was firstly used by Nehari in \cite{Ne1}, \cite{Ne2} where he obtained some non trivial solution to specific nonlinear ODEs via a variational methods, and later had a huge application in critical point theory. We were mainly inspired by \cite{Nene}, \cite{Nena}.\bigskip\\In the proof of our result, apart from some technicalities linked to the low regularity the modified energy considered, the main difficulties come from lack of compactness for our problem. In particular, even with the restriction to the manifold, the energy is not coercive. \\But, by some energetic consideration, we are able to recover the desired compactness at the level of the minimal value, and hence conclude the Theorem. In this a crucial role is played by the interval of the parameter $0<\sigma<1$, as it leads to optimal competition between the two terms of opposite sign 
\begin{equation*}
    2\sigma\int |v|^2 - 2\int|v|^2 \sin (2\Theta(v))
\end{equation*}
\begin{remark}
    The limitation on the values of $\sigma$ emerges naturally from the equation. It is easy to show that there is no nontrivial solution of \eqref{sigma} for $\sigma \ge 1$: multiplying by $v$ and integrating by parts would lead
    \begin{equation*}
        \int |\nabla v|^2 + k \int |v|^2 \le 0
    \end{equation*}
    for a certain $k\ge 0$; hence $v=0$.
\\On the other hand, the bound $\sigma>0$ is related to the variational method used to prove existence. Again by equation \eqref{sigma}, $\sigma $ is linked to the part of the energy with a negative sign; a configuration satisfying equation \eqref{sigma} with $\sigma<0$ would have strictly positive energy and it would probably not appear as a minimum of a any constrained problem.
\end{remark}

At last we prove a regularity result and a decaying estimate for the stationary waves.
\begin{proposition}
    A radial decreasing solution $(v, \phi)$ of system \eqref{sigma}-\eqref{sig} belongs to $ H^k(\R^2) \times H^k(\R^2)$ for any $k$. and it decays exponentially at infinity; i.e. there exist constants $m, R, C$ such that for any $r>R$
    \begin{equation*}
        |v(r)| \le C e^{-mr}; \ \ \ \ |\phi(r)| \le C e^{-mr}
    \end{equation*}
    \label{decay}
\end{proposition}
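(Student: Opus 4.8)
The plan is to obtain the regularity statement by a standard elliptic bootstrap and then to deduce the exponential decay from a comparison (barrier) argument applied successively to the two equations.

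\emph{Regularity.} One starts from $(v,\phi)\in H^1_{rad}\times H^1_{rad}$. In dimension two $H^1\hookrightarrow L^p$ for every $p<\infty$, and $\sin(2\phi),\cos(2\phi)$ are bounded; moreover $|\sin(2\phi)|\le 2|\phi|\in L^2$ and $v^2\in L^2$ (since $v\in L^4$). Hence the right-hand sides of \eqref{sigma} and \eqref{sig}, namely $2\sigma v-2v\sin(2\phi)$ and $-q\sin(2\phi)+2v^2\cos(2\phi)$, belong to $L^2$, and elliptic regularity gives $(v,\phi)\in H^2\times H^2$. From here one iterates: in two dimensions $H^2$ is a Banach algebra contained in $L^\infty\cap C^0$, and composition with a smooth function vanishing at the origin (which covers $\phi\mapsto\sin(2\phi)$ and $\phi\mapsto\cos(2\phi)-1$) maps $H^s$ into itself for every integer $s\ge1$; consequently, whenever $(v,\phi)\in H^{2k}$ with $k\ge1$, the right-hand sides of \eqref{sigma}--\eqref{sig} lie in $H^{2k}$, so $(v,\phi)\in H^{2k+2}$, and therefore $(v,\phi)\in H^k(\R^2)\times H^k(\R^2)$ for every $k$. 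In particular $v,\phi\in C^\infty(\R^2)$ are bounded; being radial, continuous, non-increasing and square-integrable, they tend to $0$ at infinity and are therefore nonnegative.

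\emph{Exponential decay of $v$.} Fix $\varepsilon\in(0,\sigma)$ and $R$ so large that $|\sin(2\phi(r))|<\varepsilon$ for $r\ge R$. Rewriting \eqref{sigma} as $-\Delta v+c(r)v=0$ with $c(r):=2\sigma-2\sin(2\phi(r))\ge 2\sigma-2\varepsilon=:\mu^2>0$ on $\{r\ge R\}$, one compares $v$ with the radial function $\overline v(r):=Ce^{-\mu' r}$ for a fixed $\mu'\in(0,\mu)$. A direct computation gives $-\Delta\overline v+\mu^2\overline v=\bigl(\mu^2-(\mu')^2+\mu'/r\bigr)\overline v\ge 0$, so $\overline v$ is a supersolution of $-\Delta+\mu^2$ on $\{r\ge R\}$, while $v$ satisfies $-\Delta v+\mu^2 v\le-\Delta v+c(r)v=0$ there. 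Choosing $C$ with $\overline v(R)\ge v(R)$ and using that $v-\overline v\to 0$ at infinity, the maximum principle for $-\Delta+\mu^2$ on the exterior domain $\{r\ge R\}$ (a positive interior maximum of $v-\overline v$ is impossible because $\mu^2>0$, and the vanishing at infinity disposes of the behaviour there) yields $v(r)\le Ce^{-\mu' r}$, whence $v^2\le C^2e^{-2\mu' r}$, for $r\ge R$.

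\emph{Exponential decay of $\phi$, and the main obstacle.} Enlarging $R$ so that $\phi(r)$ is small enough that $\sin(2\phi(r))\ge\phi(r)$ and $\cos(2\phi(r))\le 1$ there, equation \eqref{sig} gives $-\lambda\Delta\phi+q\phi\le 2v^2\cos(2\phi)\le 2C^2e^{-2\mu' r}$ on $\{r\ge R\}$. Picking $\beta\in\bigl(0,\min\{2\mu',\sqrt{q/\lambda}\}\bigr)$ and $\overline\phi(r):=Ae^{-\beta r}$, one has $-\lambda\Delta\overline\phi+q\overline\phi=\bigl(q-\lambda\beta^2+\lambda\beta/r\bigr)\overline\phi\ge(q-\lambda\beta^2)Ae^{-\beta r}$ with $q-\lambda\beta^2>0$, which for $A$ large is $\ge 2C^2e^{-2\mu' r}$ on $\{r\ge R\}$ (because $\beta<2\mu'$); so $\overline\phi$ is a supersolution. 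Taking in addition $A\ge\phi(R)e^{\beta R}$ and applying the same maximum principle for $-\lambda\Delta+q$ gives $\phi(r)\le Ae^{-\beta r}$, and the proposition follows with $m=\min\{\mu',\beta\}$ and suitable $C,R$, since $|v|=v$ and $|\phi|=\phi$. The bootstrap and the explicit barriers are routine; the point requiring care is the use of the maximum principle on the unbounded set $\{r\ge R\}$, where one must combine the positivity of the zero-order coefficient with the decay of $v-\overline v$ (resp.\ $\phi-\overline\phi$) at infinity — essentially a Phragm\'en--Lindel\"of argument — and one should check that the scheme is not circular: the decay of $v$ uses only $\phi(r)\to 0$, and the exponential decay of $v^2$, once established, is precisely what drives the decay of $\phi$.
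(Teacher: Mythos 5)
Your proposal is correct, and its overall skeleton matches the paper's: bootstrap regularity from the elliptic system, get exponential decay of $v$ from equation \eqref{sigma} once the potential $2\sigma-2\sin(2\phi(r))$ is bounded below by a positive constant for large $r$, and then feed the bound $v^2\lesssim e^{-2\mu' r}$ as a source into \eqref{sig} and run a barrier argument; indeed your treatment of $\phi$ (supersolution $Ae^{-\beta r}$ with $\beta^2<q/\lambda$, comparison on an exterior ball, decay at infinity to justify the maximum principle) is essentially identical to the paper's. The genuine difference is in the decay of $v$: the paper follows the Berestycki--Lions device (\cite{BL1}, Lemma 4.2), introducing $w(r)\sim r^{1/2}v$-type auxiliary functions and $z=e^{-\sqrt{\sigma}r}(w_r+\sqrt{\sigma}w)$, and excludes $z>0$ by the integrability of $r v^2$ and $r v_r^2$; this route needs no maximum principle and no sign information on $v$, since it works with $v^2$. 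Your route instead uses an explicit supersolution $Ce^{-\mu' r}$ and a Phragm\'en--Lindel\"of-type maximum principle, which makes the treatment of $v$ and $\phi$ uniform and arguably more elementary, but it does hinge on $v\ge 0$ on the exterior region --- a point you correctly supply from radial monotonicity together with $v\in L^2$ (or Lemma \ref{le1}), so there is no gap; you also rightly flag, and correctly resolve, the only delicate points (the maximum principle on the unbounded domain and the non-circularity of the two-step scheme). Either argument proves the Proposition.
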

The matter of the paper is organised as follows. In Section \ref{pre} we recall the exact Theorems from \cite{Arg} which we use subsequentially; moreover we improve some properties of the angle $\theta$ as solution of equation \eqref{eqcal2} and get a relation between charge constraints and frequency of the ground state. 
Section \ref{secstab} is dedicated to stability results Theorem \ref{teost} and Proposition \ref{teost2}. In Section \ref{secexi} we prove Theorem \ref{teosig} by a min–max method and finally, in the last section, we prove Proposition \ref{decay}.

\section{Preliminaries}\label{pre}
We start stating the precise results from \cite{Arg} needed in  this paper.
At first, we recall the existence Theorem for ground states.
\begin{theorem}
\label{min}
Let $J_a:= \inf_{S_a} E$. For $a$ above a certain threshold $a_0$, $J_a<0$ and is decreasing, while $J_a\equiv 0 $ for $a\le a_0$.\\For $a>a_0$ there exists a minimizer $(v,\phi)$ for $J_a$ and it satisfies the system \eqref{sigma}-\eqref{sig} for a real $\sigma$. Moreover we have $(v,\phi)\in H^1_{rad}\times H^1_{rad}$ decreasing, $v\ge 0$, $0\le \phi \le \pi/4$.\\Finally, there exists a $0<\tilde{a}\le a_0$ such that there is no $(v,\phi)$ solution to equations \eqref{sigma}-\eqref{sig} with $\|v\|_{L^2}^2\le \tilde{a}$.
\end{theorem}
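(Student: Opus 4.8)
The plan is to run the direct method of the calculus of variations for $E$ on the constraint $S_a$, using symmetric rearrangement to recover compactness, and then to read off the qualitative properties and the mass threshold from the Euler--Lagrange system by maximum-principle and Pohozaev arguments. First, boundedness and the sign of $J_a$: since $|\sin 2\theta|\le 1$ and $1-\cos 2\theta\ge 0$, one has $E(u,\theta)\ge \tfrac14\|\nabla u\|_{L^2}^2+\tfrac{\lambda}{4}\|\nabla\theta\|_{L^2}^2-\tfrac a2$, so $E$ is bounded below on $S_a$ and minimizing sequences have bounded gradients. Testing with $(u_\mu,0)$, $u_\mu=\mu u(\mu\,\cdot)$, gives $E\to 0$ as $\mu\to 0$, whence $J_a\le 0$ for every $a$. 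To locate the threshold I would minimize out $\theta$: for fixed $u$ the map $\theta\mapsto E(u,\theta)$ has a minimizer $\Theta(u)$ solving \eqref{eqcal2}, and the comparison principle (with $\phi\equiv 0$ a subsolution and $\phi\equiv\pi/4$ a strict supersolution of \eqref{eqcal2}, as $q>0$) confines $0\le\Theta(u)\le\pi/4$. Testing \eqref{eqcal2} against $\Theta(u)$ then yields $\int|u|^2\sin 2\Theta(u)\lesssim\|u\|_{L^4}^4\le C\|\nabla u\|_{L^2}^2\|u\|_{L^2}^2$ by the $2$D Gagliardo--Nirenberg inequality, so $E(u,\Theta(u))\ge\tfrac14\|\nabla u\|_{L^2}^2(1-2Ca)\ge 0$ for $a<\tfrac1{2C}$; combined with $J_a\le 0$ this gives $J_a\equiv 0$ for small $a$. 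For large $a$, a trial pair with $u$ spread on a disc of area $A$ and $\theta\approx\pi/4$ there gives $E\approx\tfrac12\sqrt{aq}-\tfrac a2<0$ after optimizing $A\sim\sqrt{a/q}$, so $J_a<0$ eventually; adding a widely separated, maximally spread bump of mass $a'-a$ with vanishing energy shows $J_{a'}\le J_a$. These facts define $a_0$ and give the first claim.

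For $a>a_0$ I would take a minimizing sequence and pass to Schwarz symmetric decreasing rearrangements. By P\'olya--Szeg\H{o} the gradient term does not increase, and once the angle is optimized to $\Theta(u)$ the reduced functional $u\mapsto E(u,\Theta(u))$ does not increase under rearrangement, so we may assume the sequence is radial and decreasing. On $H^1_{rad}(\R^2)$ the Strauss embedding gives the compactness $H^1_{rad}\hookrightarrow\hookrightarrow L^4$, so up to a subsequence $u_n\rightharpoonup v$ in $H^1$ and $u_n\to v$ in $L^4$. Because $J_a<0$ the weak limit is nontrivial: if $v=0$ then the coupling term tends to $0$ and $\liminf E\ge 0$, contradicting $J_a<0$. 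Weak lower semicontinuity of the gradient terms and strong $L^4$ convergence of the coupling term give $E(v,\Theta(v))\le J_a$, while the strict decrease of $J$ on $(a_0,\infty)$ excludes mass loss and forces $\|v\|_{L^2}^2=a$; hence $(v,\phi)$ with $\phi=\Theta(v)$ is a minimizer.

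I expect the main obstacle to be exactly this rearrangement step for the \emph{coupled} system, i.e. showing $E(u^\ast,\Theta(u^\ast))\le E(u,\Theta(u))$: one must symmetrize $u$ and simultaneously control the optimal angle $\Theta(u)$, and it is precisely here that compactness is available for \emph{one} suitably chosen minimizing sequence rather than for every one (the point flagged in the Introduction as the reason the robust variational scheme is unavailable here).

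The $L^2$ constraint produces a real Lagrange multiplier, so the minimizer solves $-\tfrac12\Delta v-v\sin2\phi=-\sigma v$ and $-\lambda\Delta\phi+q\sin2\phi=2v^2\cos2\phi$, that is \eqref{sigma}--\eqref{sig}. Replacing $v$ by $|v|$ does not increase the energy, giving $v\ge 0$, strictly positive and radial decreasing by the strong maximum principle and by construction, while the same sub/supersolution comparison as above gives $0\le\phi\le\pi/4$. Finally, for the nonexistence below a mass threshold I would combine two identities valid for any finite-energy solution (the dilation Pohozaev identity being legitimate by the exponential decay of Proposition \ref{decay}): testing \eqref{sigma} against $v$ gives $\|\nabla v\|_{L^2}^2+2\sigma\|v\|_{L^2}^2=2\int v^2\sin2\phi$, and the dilation identity gives $\int v^2\sin2\phi=\tfrac q2\int(1-\cos2\phi)+\sigma\|v\|_{L^2}^2$; subtracting eliminates $\sigma$ and yields $\|\nabla v\|_{L^2}^2=q\int(1-\cos2\phi)$. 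Since $0\le\phi\le\pi/4$ holds for every decaying solution, testing \eqref{sig} against $\phi$ bounds $\|\phi\|_{L^2}\lesssim\|v\|_{L^4}^2$, whence $q\int(1-\cos2\phi)\le 2q\|\phi\|_{L^2}^2\lesssim\|v\|_{L^4}^4\le C\|\nabla v\|_{L^2}^2\|v\|_{L^2}^2$. Dividing by $\|\nabla v\|_{L^2}^2$ forces $\|v\|_{L^2}^2\ge\tilde a>0$ with $\tilde a$ independent of $\sigma$, which is the asserted nonexistence for $\|v\|_{L^2}^2\le\tilde a$; and $\tilde a\le a_0$ since minimizers exist for all $a>a_0$.
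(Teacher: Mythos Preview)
The paper does not prove Theorem~\ref{min}: it is quoted verbatim from \cite{Arg} (see the sentence ``Theorem~\ref{min} recaps the results from Chapter~5 of \cite{Arg}'' at the end of Section~\ref{pre}). So there is no ``paper's own proof'' to compare against; your sketch is to be judged on its own.

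Your outline is the standard route and is essentially what \cite{Arg} does: reduce to the angle--minimized functional $u\mapsto E(u,\Theta(u))$, pass to symmetric decreasing rearrangements (P\'olya--Szeg\H{o} for the gradients, Hardy--Littlewood for $\int |u|^2\sin 2\theta$, equimeasurability for $\int(1-\cos 2\theta)$), and use the radial Strauss compactness in $L^4$. That part is fine, including your identification of the rearrangement step as the place where compactness is only recovered for a \emph{particular} minimizing sequence.

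Two points deserve tightening. First, in the existence step you invoke ``the strict decrease of $J$ on $(a_0,\infty)$'' to exclude mass loss, but your separated--bump argument only gives $J_{a'}\le J_a$. You need strict monotonicity (equivalently, strict subadditivity) \emph{before} you know a minimizer exists; the clean way is the scaling comparison $E(\sqrt{b/a}\,u,\theta)=\tfrac{b}{a}E^-(u,\theta)+E^+(\theta)$, which for $b>a$ and any $(u,\theta)\in S_a$ with $E(u,\theta)<0$ gives $J_b<J_a$ once one knows $E^-(u,\theta)<0$ along a minimizing sequence --- a consequence of $J_a<0$. Second, in the nonexistence part you appeal to Proposition~\ref{decay} to justify the Pohozaev identity and you assert $0\le\phi\le\pi/4$ ``for every decaying solution''. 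In this paper Proposition~\ref{decay} is stated only for \emph{radial decreasing} solutions, and the bound $0\le\phi\le\pi/4$ is obtained either from the minimization (Theorem~\ref{min}) or from Lemma~\ref{minan}, again via a variational characterization of $\phi$; neither is available a priori for an arbitrary solution of \eqref{sigma}--\eqref{sig}. If the nonexistence claim is meant in full generality you must supply an independent argument for decay and for $0\le\phi\le\pi/4$ (a direct maximum--principle argument on \eqref{sig} is not immediate once $\phi$ is allowed to cross $\pi/4$). If instead the intended class is solutions with $0\le\phi\le\pi/4$ and sufficient decay --- which is how \cite{Arg} sets things up --- then your Pohozaev/Gagliardo--Nirenberg chain is correct and gives the threshold $\tilde a$.
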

Moreover they prove that if $u$ has some additional integrability, namely $u\in L^\infty \cap L^4$, it is possible to have a unique solution to \eqref{eqcal2} up to some technicalities.
\begin{theorem}
\label{teoang}
Given $u\in L^\infty\left (\R^2\right) \cap L^4\left(\R^2\right )$, there exists a unique $\theta \in H^2\left (\R^2\right ), \theta =\Theta(u)$ solution of \eqref{eqcal}
satisfying $0\le \theta < \pi/4$ and $\| \theta \|_{H^2} \le C\|u \|_{L^4}$.
\\Furthermore we have the following estimate
\begin{equation}
    \label{L4}
    \|\Theta(u_1) - \Theta(u_2)\|_{H_2} \le C_{q, \lambda}\left (\|u_1\|_{H^1}, \|u_2\|_{H^1}\right )  \left(1+ \|u_1\|_{L^\infty}^2 + \|u_2\|_{L^\infty}^2\right) \| u_1-u_2\|_{L^4}
\end{equation}
\end{theorem}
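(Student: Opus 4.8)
The plan is to realise $\theta=\Theta(u)$ as the unique minimiser of a convex functional, to read off the pointwise bounds $0\le\theta<\pi/4$ from the maximum principle, and to obtain both quantitative estimates by testing the (difference of) Euler--Lagrange equation against $\theta$ (resp. $\theta_1-\theta_2$) and bootstrapping with elliptic regularity. Equation \eqref{eqcal2} is the Euler--Lagrange equation of
\[ F(\theta)=\int_{\R^2}\frac{\lambda}{2}|\nabla\theta|^2+\frac{q}{2}\bigl(1-\cos 2\theta\bigr)-|u|^2\sin 2\theta\,dx. \]
Since this functional is not coercive on all of $H^1$ (the potential is periodic), I would first replace the nonlinearity $s\mapsto 2|u|^2\cos 2s-q\sin 2s$ by a function $P(x,s)$ that agrees with it on $[0,\pi/4]$ and is extended affinely and decreasingly for $s<0$ and $s>\pi/4$, so that $P(x,\cdot)$ is non-increasing. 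The associated functional $\tilde F(\theta)=\frac{\lambda}{2}\|\nabla\theta\|_{L^2}^2+\int W(x,\theta)$, with $\partial_s W=-P$, is then \emph{convex} in $\theta$; using $1-\cos 2\theta\ge\frac{8}{\pi^2}\theta^2$ on $[0,\pi/4]$, the bound $\int|u|^2\sin 2\theta\le 2\|u\|_{L^4}^2\|\theta\|_{L^2}$ and Young's inequality, it is also coercive on $H^1(\R^2)$. A convex coercive functional on the reflexive space $H^1$ has a unique minimiser $\theta$, solving $-\lambda\Delta\theta=P(x,\theta)$. The maximum principle applied at a would-be interior extremum forces $0\le\theta\le\pi/4$ (at a point where $\theta>\pi/4$ one has $P<0$ while $-\lambda\Delta\theta\ge 0$, and symmetrically where $\theta<0$); hence $P(x,\theta)$ coincides with the original nonlinearity and $\theta$ solves \eqref{eqcal2}. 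The same argument at $\theta=\pi/4$, where the right-hand side of \eqref{eqcal2} equals $-q<0$, upgrades this to the strict bound $\theta<\pi/4$.

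For uniqueness I would subtract the equations of two solutions $\theta_1,\theta_2\in[0,\pi/4)$, test against $w=\theta_1-\theta_2$, and use that $s\mapsto q\sin 2s$ is increasing while $s\mapsto 2|u|^2\cos 2s$ is decreasing on $[0,\pi/4)$: both contributions carry the favourable sign, leaving $\lambda\|\nabla w\|_{L^2}^2\le 0$, so $w\equiv 0$. For the norm bound, testing \eqref{eqcal2} against $\theta$ and using $\sin 2\theta\ge\frac{4}{\pi}\theta$ and $\cos 2\theta\le 1$ gives $\lambda\|\nabla\theta\|_{L^2}^2+\frac{4q}{\pi}\|\theta\|_{L^2}^2\le 2\|u\|_{L^4}^2\|\theta\|_{L^2}$, hence $\|\theta\|_{H^1}\le C\|u\|_{L^4}^2$. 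Since then $\Delta\theta=\lambda^{-1}\bigl(q\sin 2\theta-2|u|^2\cos 2\theta\bigr)\in L^2$ with $\|\Delta\theta\|_{L^2}\le C(\|\theta\|_{L^2}+\|u\|_{L^4}^2)$, the elliptic estimate $\|\theta\|_{H^2}\le C(\|\theta\|_{L^2}+\|\Delta\theta\|_{L^2})$ delivers the stated $H^2$ control, with the natural homogeneity $\|\theta\|_{H^2}\le C\|u\|_{L^4}^2$ reflecting that $|u|^2$ is the source term.

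The crucial preliminary observation for the Lipschitz estimate is an $L^\infty$ \emph{separation bound}: evaluating \eqref{eqcal2} at a maximum point of $\theta$, where $-\Delta\theta\ge 0$, gives $q\sin 2\|\theta\|_\infty\le 2\|u\|_\infty^2\cos 2\|\theta\|_\infty$, that is $\tan\bigl(2\|\theta\|_\infty\bigr)\le 2\|u\|_\infty^2/q$, so that $\cos 2\theta\ge c(\|u\|_\infty)>0$ uniformly on $\R^2$. This is exactly where the hypothesis $u\in L^\infty$ enters. Writing $\theta_i=\Theta(u_i)$, $w=\theta_1-\theta_2$, subtracting the equations, testing against $w$ and moving the monotone terms to the left produces
\[ \lambda\|\nabla w\|_{L^2}^2+\frac{4}{\pi}\int_{\R^2}\bigl(q\cos(\theta_1+\theta_2)+2|u_1|^2\sin(\theta_1+\theta_2)\bigr)w^2\le 2\int_{\R^2}\bigl||u_1|^2-|u_2|^2\bigr|\,|w|. \]
By the separation bound, $\theta_1+\theta_2\le\|\theta_1\|_\infty+\|\theta_2\|_\infty<\pi/2$, so $\cos(\theta_1+\theta_2)\ge c_*>0$ with $c_*=c_*(\|u_1\|_\infty,\|u_2\|_\infty)$, which converts the second term into a genuine coercive contribution $\frac{4qc_*}{\pi}\|w\|_{L^2}^2$. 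Bounding the right-hand side by $2\|\,|u_1|+|u_2|\,\|_{L^4}\|u_1-u_2\|_{L^4}\|w\|_{L^2}$ and absorbing then yields $\|w\|_{H^1}\le C_{q,\lambda}(\|u_1\|_{H^1},\|u_2\|_{H^1})\|u_1-u_2\|_{L^4}$ (via $\|u_i\|_{L^4}\le C\|u_i\|_{H^1}$). Finally, expressing $\Delta w$ from the difference equation and estimating $\|\,|u_1|^2(\cos 2\theta_1-\cos 2\theta_2)\|_{L^2}\le 2\|u_1\|_\infty^2\|w\|_{L^2}$ produces precisely the factor $1+\|u_1\|_\infty^2+\|u_2\|_\infty^2$; combined with elliptic regularity this closes \eqref{L4}.

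I expect the main difficulty to be exactly the passage from gradient control to $L^2$ control of $w$. The operator linearising \eqref{eqcal2} is only non-negative, not coercive, on $\R^2$, and its zero-order coefficient $q\cos(\theta_1+\theta_2)$ degenerates as $\theta_i\to\pi/4$, so there is no spectral gap to exploit directly. The $L^\infty$ separation bound is what removes this degeneracy and restores uniform $L^2$-coercivity; accordingly I would organise the whole proof around establishing that bound first, and I read the explicit appearance of $\|u_i\|_\infty^2$ in \eqref{L4} as the signature of precisely this mechanism.
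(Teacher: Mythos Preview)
The paper does not actually prove Theorem~\ref{teoang}; it is quoted from \cite{Arg} (``Theorem~\ref{teoang} resumes Proposition~3.1 and 4.1''), and the only hint about the original argument is that ``the authors used the hypothesis $u\in L^\infty$ as their proof for the existence of $\Theta(u)$ relies on the continuation properties of a map on Banach spaces'' --- i.e.\ a fixed-point/continuation argument rather than your variational one. What the present paper \emph{does} contain is Lemma~\ref{minan} (existence/uniqueness of $\Theta(u)$ as unique minimiser of the convex functional $F_u$) and Lemma~\ref{lat} (a Lipschitz bound obtained by subtracting the equations and testing against $\theta_1-\theta_2$). Your proposal is essentially a synthesis of these two lemmas together with the $L^\infty$ separation bound, and is correct; in particular, your identification of the mechanism ``$\|u\|_\infty$ keeps $\theta$ uniformly away from $\pi/4$ and restores coercivity of the linearised zero-order term'' is exactly what the paper later works to \emph{remove} in Lemma~\ref{lat} via the localised condition~\eqref{tec}.

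Two minor points. First, evaluating $-\Delta\theta$ pointwise at an interior maximum is informal for an $H^2$ solution; the clean version is the Stampacchia test against $(\theta-M)^+$ with $M=\tfrac12\arctan(2\|u\|_\infty^2/q)$, which the paper itself uses (see the end of the proof of Lemma~\ref{leman}). Second, if you track constants, your $H^1$ step already costs a factor $1/c_*\sim 1+\|u_1\|_\infty^2+\|u_2\|_\infty^2$, and the bootstrap to $H^2$ multiplies by another $1+\|u_1\|_\infty^2$; so your route yields the square of the factor in~\eqref{L4} rather than the first power. This is cosmetically weaker but irrelevant for every application in the paper.
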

\begin{remark}
    As it was stated in the original paper \cite{Arg}, by the Gagliardo-Nirenberg inequality we can recast the $H^1$ norm on the right hand side, i.e. with the same hypothesis as in the Theorem \ref{teoang}, it holds 
\begin{equation}
    \label{stan}
    \|\Theta(u_1) - \Theta(u_2)\|_{H_2} \le C_{q, \lambda}\left(\|u_1\|_{H^1}, \|u_2\|_{H^1}\right)  \left(1+ \|u_1\|_{L^\infty}^2 + \|u_2\|_{L^\infty}^2\right) \| u_1-u_2\|_{H^1}
\end{equation}
\end{remark}
Finally, the authors prove global well posedness, uniqueness and regularity of the Schrödinger-Poisson system \eqref{eqcal}-\eqref{eqcal2} for an initial datum in the energy space.
\begin{theorem}    
\label{teoca}
    Given $u_0\in H^1\left (\R^2\right )$, there exists a unique $(u,\theta)\in C\left (\R, H^1\left (\R^2\right)\right ) \times L^\infty \left (\R, H^2\left (\R^2\right )\right )$ solution of the evolution problem \eqref{eqcal}-\eqref{eqcal2}
with initial datum $u_0$, such that $0\le \theta \le \pi/4$, $\nabla u \in L^4_{loc}(\R, L^4(\R^2))$.
Moreover, the quantities 
\begin{equation*}
    E(u,\theta)=\frac{1}{4}\int_{\R^2} |\nabla u|^2 + \lambda |\nabla \theta|^2 - 2 |u|^2 \sin (2\theta) + q (1-\cos(2\theta)) \,dx; \ \ \ Q(u):= \frac{\int_{\R^2}|u|^2}{2}
\end{equation*}
are preserved for all times. 
\end{theorem}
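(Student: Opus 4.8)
The plan is to decouple the system, reducing it to a single nonlinear Schrödinger equation with a bounded nonlocal nonlinearity, run a standard Strichartz contraction for local existence and uniqueness, and then globalize via the conservation laws. First I would use Theorem \ref{teoang} to eliminate $\theta$: for a function $u$ in the right class the elliptic equation \eqref{eqcal2} has the unique solution $\theta=\Theta(u)$, so \eqref{eqcal}--\eqref{eqcal2} is equivalent to the single equation
\[
i\partial_z u + \tfrac12\Delta u + u\sin(2\Theta(u)) = 0,
\]
with a bounded nonlinearity $N(u):=u\sin(2\Theta(u))$ satisfying $|N(u)|\le|u|$ pointwise, since $0\le\Theta(u)<\pi/4$. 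The delicate point at the outset is that $\Theta$ is only defined on $L^\infty\cap L^4$, and in dimension two $H^1$ does not embed into $L^\infty$; this is exactly why the solution must carry the extra integrability $\nabla u\in L^4_{loc}(\R,L^4)$, for then $u\in W^{1,4}(\R^2)\hookrightarrow L^\infty$ for almost every $z$ and $\Theta(u)$ is well defined along the flow.

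For local well-posedness I would set up a contraction for the Duhamel map
\[
\Phi(u)(z) = e^{iz\Delta/2}u_0 + i\int_0^z e^{i(z-s)\Delta/2}\,N(u(s))\,ds
\]
in the space $X_T := C([-T,T];H^1)\cap L^4([-T,T];W^{1,4})$, using the admissible Strichartz pair $(4,4)$ of the two-dimensional Schrödinger group. The nonlinear estimates reduce to bounding $N$ in the dual Strichartz norm and in $L^1_zH^1_x$: the $L^2$ part is immediate from $|N(u)|\le|u|$, while for the gradient one writes $\nabla N(u)=\sin(2\Theta(u))\nabla u+2u\cos(2\Theta(u))\nabla\Theta(u)$ and controls $\nabla\Theta(u)$ through the elliptic regularity $\|\Theta(u)\|_{H^2}\le C\|u\|_{L^4}\le C\|u\|_{H^1}$ of Theorem \ref{teoang}, together with the embeddings $H^1\hookrightarrow L^p$, $p<\infty$. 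Contraction then follows from the Lipschitz bound \eqref{stan} on $\Theta(u_1)-\Theta(u_2)$, whose $L^\infty$ factors are absorbed by the $W^{1,4}$ component of $X_T$; the existence time depends only on $\|u_0\|_{H^1}$, and uniqueness is built into the same estimates.

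Conservation of $Q$ and $E$ I would establish first for regularized data and then extend by density and continuity. Charge conservation is the imaginary part of the $L^2$ pairing of \eqref{eqcal} with $\bar u$. For the energy, the key structural remark is that \eqref{eqcal2} is precisely the Euler--Lagrange equation $\delta E/\delta\theta=0$, so along the constraint $\theta=\Theta(u)$ the reduced energy $\mathcal E(u):=E(u,\Theta(u))$ has variational derivative $\delta\mathcal E/\delta\bar u=\tfrac14(-\Delta u-2u\sin(2\Theta(u)))$, the $\theta$-variation contributing nothing, and the reduced equation is its Hamiltonian flow, whence $\mathcal E$ is formally conserved. Since $\mathcal E$ is continuous on $H^1$ by the boundedness of the trigonometric terms and the Lipschitz property of $\Theta$, conservation passes to general $H^1$ solutions.

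Finally, globalization is immediate from the energy identity: using $\lambda|\nabla\theta|^2\ge0$, $q(1-\cos 2\theta)\ge0$ and $\sin(2\theta)\le1$,
\[
\tfrac14\|\nabla u(z)\|_{L^2}^2 \le E(u_0,\theta_0) + \tfrac12\int_{\R^2}|u(z)|^2\,dx = E(u_0,\theta_0) + Q(u_0),
\]
which with charge conservation gives a uniform-in-$z$ bound on $\|u(z)\|_{H^1}$. As the local existence time depends only on this norm, the solution extends to all $z$, and the bounds $0\le\theta\le\pi/4$ together with $\theta\in L^\infty(\R,H^2)$ follow by applying Theorem \ref{teoang} to $u(z)$ under the uniform $H^1$ control. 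I expect the main obstacle to be the two-dimensional $L^\infty$ borderline: making the nonlinearity well defined and closing the contraction requires carefully threading the $W^{1,4}$-Strichartz regularity through every use of $\Theta$, and the rigorous justification of energy conservation at the $H^1$ level needs the regularization argument rather than a direct computation on rough data.
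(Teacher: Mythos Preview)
The paper does not prove Theorem \ref{teoca}: it is quoted in the Preliminaries as a result from \cite{Arg} (the paper states explicitly that ``Theorem \ref{teoca} comes from Theorem 4.1 and 4.2 from the same paper'' and ``we refer the interested reader to the original paper for more details''). So there is no proof in the paper to compare against; only the one-line heuristic that the Strichartz regularity $\nabla u\in L^4_{loc}(\R,L^4)$ gives $u(t)\in L^\infty$ for a.e.\ $t$, which is what makes $\Theta(u(t))$ well defined via Theorem \ref{teoang}.

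Your sketch is the expected strategy and matches that heuristic: decouple through $\theta=\Theta(u)$, run a Strichartz fixed point in $C_t H^1\cap L^4_t W^{1,4}_x$ using the $(4,4)$ admissible pair in $\R^2$, control the $L^\infty$ factors in the Lipschitz estimate \eqref{L4}/\eqref{stan} via the $W^{1,4}\hookrightarrow L^\infty$ embedding, and globalize by the conserved quantities. The structural observation that \eqref{eqcal2} is $\partial_\theta E=0$, so the reduced energy $E(u,\Theta(u))$ is a Hamiltonian for the reduced equation, is the right way to see energy conservation. The one point I would flag as needing care in an actual write-up is closing the contraction: the Lipschitz constant in \eqref{stan} depends on $\|u_i\|_{L^\infty}$, which in your space is only in $L^4_t$, so the difference estimate for $\nabla N(u_1)-\nabla N(u_2)$ must be organized so that the $L^\infty_x$ factors land in an $L^p_t$ slot compatible with the dual Strichartz norm; this is doable but is exactly the technical step where the argument is not merely ``standard NLS''.
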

 The coupled system has a regularizing effect on $u$, as it propagates the initial $H^1$ regularity, and gets $\nabla u(t) \in L^4$ for a.e. $t$; for any such $t$ it follows $u(t)\in L^\infty $ and by Theorem \ref{teoang} exists $\theta(t)$ solution of \eqref{eqcal2}. We refer the interested reader to the original paper for more details.\\
Theorem \ref{min} recaps the results from Chapter 5 of \cite{Arg}, Theorem \ref{teoang} resumes Proposition 3.1 and  4.1, Theorem \ref{teoca} comes from Theorem 4.1 and 4.2  from the same paper. \\

We turn now to present an improvement of the estimate \eqref{stan}. In the proof of Theorem \ref{teoang}, the authors used the hypothesis $u\in L^\infty$ as their proof for the existence of $\Theta(u)$ relies on the continuation properties of a map on Banach spaces. More crucially, the $L^\infty$ norm appears explicitly in the Lipschitz control of the map, see \eqref{L4}, \eqref{stan}. \\We need to weaken this dependence on belonging to $L^\infty$ of $u$, as our arguments for Theorem \ref{teost} and in particular Theorem \ref{teosig} are strongly variational, and the $L^\infty$ space is not naturally related to a variational problem with the energy \eqref{ene}. 
\\We prove that we can naturally extend the definition of the angle $\Theta(u)$ to the space $u\in H^1$. 
Rather than using some density argument, we prefer to characterize $\Theta(u)$, as the unique minimizer of the energy $E$, for $u$ fixed. This variational property of $\Theta(u)$ has a key role in the proof of Theorem \ref{teosig}.
\begin{lemma}
\label{minan}
    For $u\in H^1(\R^2)$, there exists a $\tilde{\Theta}(u)\in H^1(\R^2)$, such that $\tilde\Theta (u) $ solves equation \eqref{eqcal2} and verifies $0\le \tilde\Theta(u) \le \pi/4$. 
    \\$\tilde\Theta(u) $ is characterized as the only minimizer of the functional 
    \begin{equation*}
        F_u(\theta) = \lambda\int|\nabla \theta|^2+ q\int 1-\cos 2\theta - 2\int|u|^2 \sin 2\theta
    \end{equation*}
    and for $u \in L^\infty\cap H^1$ it coincides with $\Theta(u)$ given by Theorem \ref{teoang}.\\The map $\tilde{\Theta}:H^1\to H^1$ is continuous; if $u$ is radially symmetric (resp. radially symmetric and decreasing), then $\tilde{\Theta}(u)$ is radially symmetric (resp. radially symmetric and decreasing).
    \\Finally, $\tilde{\Theta}(u)\in H^2$ and satisfies the estimate
    \begin{equation}
    \label{0an}
        \| \tilde{\Theta}(u)\|_{H^2} \le C\| u\|_{L^4}
    \end{equation}
\end{lemma}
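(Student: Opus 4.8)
The plan is to construct $\tilde\Theta(u)$ directly as a minimizer of $F_u$, then establish its properties (Euler--Lagrange equation, pointwise bounds, symmetry, regularity, continuity) essentially by elliptic arguments, and finally identify it with $\Theta(u)$ when $u\in L^\infty\cap H^1$ via uniqueness. First I would check that $F_u$ is well-defined and coercive on $H^1(\R^2)$: the term $\lambda\int|\nabla\theta|^2$ controls the gradient, the term $q\int(1-\cos2\theta)\ge 0$ is harmless, and $|2\int|u|^2\sin2\theta|\le 2\|u\|_{L^2}^2$ is bounded uniformly in $\theta$ since $u\in H^1\subset L^2$ and $|\sin2\theta|\le1$; hence $F_u(\theta)\ge \lambda\|\nabla\theta\|_{L^2}^2 - 2\|u\|_{L^2}^2$. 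This gives coercivity of $\|\nabla\theta\|_{L^2}$ but not of $\|\theta\|_{L^2}$ — a first, minor obstacle — which I would handle by restricting the minimization to $0\le\theta\le\pi/4$ a priori: given any $\theta$, replacing it by $\min(\max(\theta,0),\pi/4)$ does not increase $\lambda\int|\nabla\theta|^2$ (truncation is a contraction in $\dot H^1$) and, since $\sin2\theta\ge0$ and $1-\cos2\theta$ is increasing on $[0,\pi/4]$ while $|u|^2\ge0$, it does not increase the other two terms either (one checks separately the regions $\theta<0$ and $\theta>\pi/4$). So a minimizing sequence may be assumed to lie in $[0,\pi/4]$, hence bounded in $L^2_{loc}$, and combined with the gradient bound it is bounded in $H^1(\R^2)$; weak $H^1$ limits, weak lower semicontinuity of the Dirichlet energy, and compactness of the trigonometric terms (Fatou / dominated convergence after passing to a.e.-convergent subsequences, using the uniform $L^\infty$ bound) produce a minimizer $\tilde\Theta(u)$ with $0\le\tilde\Theta(u)\le\pi/4$.

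Next I would derive the Euler--Lagrange equation. Since $\tilde\Theta(u)$ is an interior-constrained minimizer and the constraint $0\le\theta\le\pi/4$ is (generically) inactive in the interior where one perturbs, the first variation of $F_u$ vanishes: $-\lambda\Delta\tilde\Theta + q\sin2\tilde\Theta - 2|u|^2\cos2\tilde\Theta = 0$ in the distributional sense, which is exactly \eqref{eqcal2}. Strictly, because of the pointwise constraint one first gets a variational inequality; but the obstacle values $0$ and $\pi/4$ are themselves strict sub/supersolutions off the support structure — at $\theta\equiv 0$ the left side is $-2|u|^2\le 0$ and at $\theta\equiv\pi/4$ it is $q>0$ — so a standard maximum-principle / penalization argument removes the constraint and yields the genuine PDE. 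Uniqueness of the minimizer then follows from strict convexity of $F_u$ restricted to $\{0\le\theta\le\pi/4\}$: the map $\theta\mapsto\int(1-\cos2\theta)$ is convex there since $(1-\cos2\theta)''=4\cos2\theta\ge0$ on $[0,\pi/4]$, $\theta\mapsto-\int|u|^2\sin2\theta$ is convex there since $(-\sin2\theta)''=4\sin2\theta\ge 0$, and the Dirichlet term is strictly convex; so $F_u$ is strictly convex on this convex set and has at most one minimizer. (For $u\in L^\infty\cap H^1$, Theorem \ref{teoang} provides a solution of \eqref{eqcal2} with $0\le\theta<\pi/4$, which is then a critical point of $F_u$ inside the convex set; by strict convexity it is the minimizer, so $\tilde\Theta(u)=\Theta(u)$.)

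For regularity and the bound \eqref{0an}: rewrite the equation as $-\lambda\Delta\tilde\Theta = 2|u|^2\cos2\tilde\Theta - q\sin2\tilde\Theta$; the right side lies in $L^2$ because $|u|^2\in L^2$ (as $u\in H^1(\R^2)\hookrightarrow L^4$) and $\sin2\tilde\Theta\in L^2$ (it is bounded and, from $q\sin2\tilde\Theta = 2|u|^2\cos2\tilde\Theta+\lambda\Delta\tilde\Theta$... — more simply, $|\sin 2\tilde\Theta|\le 2\tilde\Theta$ and one gets $L^2$ from an energy estimate, or directly from $0\le\tilde\Theta$ and testing the equation against $\tilde\Theta$). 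Elliptic regularity then gives $\tilde\Theta(u)\in H^2$ with $\|\tilde\Theta(u)\|_{H^2}\le C(\|u\|_{L^4}^2 + \|\sin2\tilde\Theta\|_{L^2})$; absorbing the $\sin$ term by a Poincaré-type/energy estimate (test against $\tilde\Theta$: $\lambda\|\nabla\tilde\Theta\|_{L^2}^2 + q\int\tilde\Theta\sin2\tilde\Theta = 2\int|u|^2\tilde\Theta\cos2\tilde\Theta\le C\|u\|_{L^4}^2\|\tilde\Theta\|_{L^2}$, and using $\tilde\Theta\sin2\tilde\Theta\gtrsim\tilde\Theta^2$ on $[0,\pi/4]$) yields the clean estimate $\|\tilde\Theta(u)\|_{H^2}\le C\|u\|_{L^4}$. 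Continuity of $\tilde\Theta:H^1\to H^1$: if $u_n\to u$ in $H^1$ then $u_n\to u$ in $L^4$, the $\tilde\Theta(u_n)$ are bounded in $H^2$ by \eqref{0an}, hence precompact in $H^1_{loc}$ and weakly convergent in $H^2$; any limit point solves \eqref{eqcal2} with datum $u$ and stays in $[0,\pi/4]$, so by the uniqueness just proved it equals $\tilde\Theta(u)$, giving full convergence — and one upgrades $H^1_{loc}$ to $H^1(\R^2)$ using the uniform decay implied by the $H^2$ bound plus the equation (or by the difference-estimate argument in the spirit of \eqref{L4}, now available since the $L^\infty$ norms are controlled by $H^2\hookrightarrow L^\infty$). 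Radial symmetry (resp. radial monotonicity) is inherited because the functional $F_u$ is invariant under rotations when $u$ is radial, so by uniqueness the minimizer is radial; decreasing monotonicity follows from a reflection/rearrangement comparison (Schwarz symmetrization decreases $\int|\nabla\theta|^2$, preserves $\int(1-\cos2\theta)$, and increases $\int|u|^2\sin2\theta$ when $|u|^2$ is itself radially decreasing, since $\sin2\theta$ is increasing in $\theta\in[0,\pi/4]$ — so the symmetric-decreasing rearrangement of $\tilde\Theta(u)$ lowers $F_u$ and must coincide with $\tilde\Theta(u)$).

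The main obstacle I anticipate is the clean passage from the constrained variational inequality to the unconstrained PDE \eqref{eqcal2} together with the pointwise bound $0\le\tilde\Theta(u)\le\pi/4$ — i.e. making rigorous that the obstacles are never ``really'' active — and, relatedly, promoting $H^1_{loc}$ convergence to $H^1(\R^2)$ convergence for the continuity statement without an a priori $L^\infty$ bound on $u$. Both are handled by the same mechanism: the $H^2$ estimate \eqref{0an} gives $\tilde\Theta(u)\in L^\infty$ with quantitative decay, which feeds back into a maximum principle for the first and into a global (not merely local) difference estimate for the second.
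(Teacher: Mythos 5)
Your overall route is the paper's: the direct method applied to $F_u$ with the a priori reduction of minimizing sequences to the range $[0,\pi/4]$, the Euler--Lagrange equation \eqref{eqcal2}, uniqueness via convexity of $F_u$ on that range, and Pólya--Szegő plus the Hardy--Littlewood rearrangement inequality for the radially decreasing case. You differ in two executions, both legitimate: you derive \eqref{0an} self-containedly (testing the equation against $\tilde\Theta(u)$, using $\theta\sin 2\theta\gtrsim\theta^2$ on $[0,\pi/4]$, then elliptic regularity), whereas the paper gets it by density from the estimate of Theorem \ref{teoang}; note your computation actually yields $\|\tilde\Theta(u)\|_{H^2}\le C\|u\|_{L^4}^2$ rather than the literal linear bound, which is harmless for every use made of it. And you prove continuity of $\tilde\Theta$ by compactness-plus-uniqueness, whereas the paper argues variationally: $F_{u_n}\to F_u$, the minimizers converge weakly to the minimizer, and strong convergence follows from convergence of the minimal values and of the norms.

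Two concrete soft spots. First, the plain truncation $\theta\mapsto\min(\max(\theta,0),\pi/4)$ does \emph{not} pointwise decrease the potential part of $F_u$ once $\theta$ leaves roughly $[-\pi/2,\pi/2]$: where $\theta\approx\pi$, truncation raises $q\int(1-\cos 2\theta)$ by about $q$ while lowering the $u$-term only by $2|u|^2$, a net increase wherever $2|u|^2<q$; symmetrically, near $\theta\approx-3\pi/4$ the $u$-term increases. You need a $1$-Lipschitz folding of $\R$ onto $[0,\pi/4]$ built from the reflections $\theta\mapsto-\theta$, $\theta\mapsto\pi-\theta$, $\theta\mapsto\pi/2-\theta$ (which simultaneously increase $\sin 2\theta$ and $\cos 2\theta$), or simply quote Lemma 5.1 of \cite{Arg}, as the paper does, for a minimizing sequence in $[0,\pi/4]$. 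Second, on continuity: your fallback ``difference estimate in the spirit of \eqref{L4}'' is not available here, since \eqref{L4} requires $\|u_n\|_{L^\infty}$, which $H^1(\R^2)$ convergence does not control, and the coercivity of $\sin 2\theta$ degenerates at $\theta=\pi/4$ --- this is exactly why the paper needs the extra hypothesis \eqref{tec} and radial symmetry in Lemmas \ref{lat}--\ref{leman} to obtain such an estimate. Your primary route (weak $H^2$ limit solves \eqref{eqcal2}, takes values in $[0,\pi/4]$, hence equals $\tilde\Theta(u)$ by convexity) is fine and gives strong $H^1_{loc}$ convergence, but the upgrade to convergence in $H^1(\R^2)$ needs uniform tail control, which a uniform $H^2$ bound alone does not supply; you can close it by comparison, e.g. $0\le\tilde\Theta(u_n)\le\zeta_n$ with $-\lambda\Delta\zeta_n+(4q/\pi)\zeta_n=2|u_n|^2$ and $\zeta_n$ convergent in $H^2$ since $|u_n|^2\to|u|^2$ in $L^2$, or by the paper's softer argument through convergence of the minimal values together with weak convergence.
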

\begin{proof}
    For $u\in H^1$ fixed, $F_u$ is bounded from below, coercive and lower semicontinuous with respect to weak convergence in $H^1$. It was observed in \cite{Arg}, see Lemma 5.1, that there exists a minimizing sequence $\theta_n$ satisfying $0\le \theta_n \le \pi / 4$.
    \\By standard variational methods $\theta_n \to \phi=: \tilde{\Theta}(u)$ minimum point for the energy, which solves the related Euler-Lagrange equation \eqref{eqcal2}. The convergence of $\theta_n \to \tilde{\Theta}(u)$ is strong in $H^1$ by convergence of the norms, and pointwise convergence implies $0  \le \tilde{\Theta}(u) \le \pi/4$. The minimum point is unique by convexity of $F_u$.
\\For $u_n\to u$ in $H^1$, since the convergence holds also in $L^4$, $F_{u_n}(\theta) \to F_u(\theta)$ for any $\theta$. Hence the sequence $\tilde\Theta(u_n)$, bounded in $H^1$, converges weakly to the minimizer $\tilde\Theta(u)$; again by continuity of the minimal value with respect to $u$, and convergence of the norms, the convergence is actually strong.\\For $u \in H^1_{rad}$ the variational problem $F_u$ can be considered for $\theta \in H^1_{rad}$. It follows naturally $\tilde{\Theta}(u)\in H^1_{rad}$. \\If $u$ is radially decreasing, let $\phi$ be the symmetric rearrangement of $\Theta(u)$; then by Pólya–Szegö inequality and basic properties for symmetric rearrangements (see \cite{Arg} and reference therein) $F_u(\phi)\le F_u (\Theta(u))$.\\Finally, estimates \eqref{0an} follows, by density argument, by the analogous estimate given in Theorem \ref{teoang} for $u  \in L^\infty\cap L^4$.
\end{proof}
We keep saying $\Theta(u) $ in the rest of the paper to refer to the solution $\tilde \Theta(u) $ of this Lemma.\bigskip\\
 We want to lose the dependence on the $ {L^\infty}$ norm on the right hand side of estimate \eqref{stan}. It is enough for our aim to get a bound on $H^1$ norm of the difference $\theta_1- \theta_2$. We closely follow the proof of Theorem \ref{teoang}, with the necessary modifications: we unbalance the inequality at the expense of $u_1$ to avoid the undesired dependence on $\|u_1\|_{L^\infty},\|u_2\|_{L^\infty}$. \\
 In particular, the reasons for a bound of the $L^\infty $ norm in the original proof will be absorbed by technical hypothesis on $u_1$ only. We assume them in the following Lemma, and later we prove they are fulfilled by any $0\neq u_1 \in H^1_{rad}$. This procedure makes the dependence of the constant $C$ on $u_1$ much worse; nonetheless, the estimate is used with a fixed $u_1$ and $u_2$ a small perturbation in $H^1$.
\begin{lemma} \label{lat}
   Let $u_1, u_2\in  H^1(\R^2)$, and $\theta_1, \theta_2$ the respective angles given by Lemma \ref{minan}. Assume also there exist $\varepsilon>0$, $\alpha< \pi/4$ such that the following implication holds:
    \begin{equation}
    \label{tec}
    \mathrm{for\ a.e.\ } x\in \R^2, \ \mathrm{if\ \ }\theta_1(x) > \alpha\ \ \ \ \mathrm{then\ \ } |u_1(x)| \ge \varepsilon
    \end{equation}
    Then exists $C= C\left ( \|u_1\|_{H^1}, \|u_2\|_{H^1}, \varepsilon, \alpha, q ,\lambda\right )$ such that
\begin{equation}
          \label{stan2}
    \|\theta_1 - \theta_2\|_{H_1} \le C \| u_1-u_2\|_{H^1}
\end{equation}
\label{leman1}
\end{lemma}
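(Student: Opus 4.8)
The plan is to revisit the proof of Theorem~\ref{teoang} from \cite{Arg} and track precisely where the $L^\infty$ bounds on $u_1,u_2$ were used, replacing them with the structural hypothesis \eqref{tec}. Write $\theta_i = \Theta(u_i)$, $i=1,2$, and set $w := \theta_1-\theta_2$. Subtracting the two copies of \eqref{eqcal2} gives
\begin{equation*}
-\lambda\Delta w + q\bigl(\sin 2\theta_1 - \sin 2\theta_2\bigr) = 2|u_1|^2\cos 2\theta_1 - 2|u_2|^2\cos 2\theta_2 .
\end{equation*}
Testing against $w$ and integrating by parts, the elastic term $\lambda\|\nabla w\|_{L^2}^2$ is produced on the left, and the term $q\int(\sin 2\theta_1-\sin 2\theta_2)w$ is nonnegative (monotonicity of $\sin$ on $[0,\pi/2]$, since $0\le\theta_i\le\pi/4$), so it can be dropped or, better, used to control a piece of $\|w\|_{L^2}$. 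The right-hand side, after adding and subtracting $2|u_1|^2\cos 2\theta_2$, splits as
\begin{equation*}
2\int |u_1|^2\bigl(\cos 2\theta_1 - \cos 2\theta_2\bigr)w \;+\; 2\int \bigl(|u_1|^2-|u_2|^2\bigr)\cos 2\theta_2\, w .
\end{equation*}

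The first integral is the delicate one: $\cos 2\theta_1-\cos 2\theta_2 = -2\sin(\theta_1+\theta_2)\sin(w)$, which has the \emph{good sign} against $w$ (again because $0\le\theta_1+\theta_2\le\pi/2$ and $\sin$ is odd), so it contributes a negative term $-4\int|u_1|^2\sin(\theta_1+\theta_2)\sin(w)\,w \le 0$ that helps rather than hurts; this is exactly the coercivity from convexity of $F_{u_1}$. The hypothesis \eqref{tec} enters here: where $\theta_1>\alpha$ we have $|u_1|\ge\varepsilon$, so $\sin(\theta_1+\theta_2)\gtrsim \sin(\theta_1)\gtrsim$ (something positive) and this good term actually dominates $\int_{\{\theta_1>\alpha\}} w^2$; on the complementary set $\{\theta_1\le\alpha\}$ one has $\theta_2 \le \theta_1 + |w|$ hence $\sin 2\theta_1-\sin 2\theta_2$ together with the $q$-term controls $w^2$ directly there (or one estimates $w$ pointwise by $\theta_1+\theta_2\le 2\alpha$ plus the region where $w$ is large). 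Combining, one gets $\|\nabla w\|_{L^2}^2 + \|w\|_{L^2}^2 \lesssim \bigl|\int(|u_1|^2-|u_2|^2)\cos 2\theta_2\,w\bigr|$.

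For the remaining term, write $|u_1|^2-|u_2|^2 = (|u_1|-|u_2|)(|u_1|+|u_2|)$ and estimate
\begin{equation*}
\Bigl|\int (|u_1|^2-|u_2|^2)\cos 2\theta_2\,w\Bigr| \le \|\,|u_1|+|u_2|\,\|_{L^4}\,\|u_1-u_2\|_{L^4}\,\|w\|_{L^2}
\end{equation*}
by Hölder with exponents $4,4,2$, and then absorb $\|w\|_{L^2}$ into the left side by Young's inequality, while bounding $\|u_1\|_{L^4},\|u_2\|_{L^4}\lesssim \|u_1\|_{H^1},\|u_2\|_{H^1}$ via Gagliardo--Nirenberg in $\R^2$. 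Finally convert $\|u_1-u_2\|_{L^4}$ to $\|u_1-u_2\|_{H^1}$ by Gagliardo--Nirenberg again, as in the Remark following Theorem~\ref{teoang}. This yields \eqref{stan2} with a constant depending only on $\|u_1\|_{H^1},\|u_2\|_{H^1},\varepsilon,\alpha,q,\lambda$.

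\textbf{Main obstacle.} The one genuinely nontrivial point is the lower bound for the "good" term $\int|u_1|^2\sin(\theta_1+\theta_2)\,w\sin w$ on the region where $\theta_1$ is bounded away from $0$: one must argue that there $|u_1|^2\sin(\theta_1+\theta_2)$ is bounded below by a positive constant (using \eqref{tec} to get $|u_1|\ge\varepsilon$ and $\theta_1>\alpha>0$ to get $\sin(\theta_1+\theta_2)\ge\sin\alpha$), and that this, added to the $q$-monotonicity term controlling the region $\{\theta_1\le\alpha\}$, suffices to recover the full $\|w\|_{L^2}^2$ — carefully handling the set where $w$ itself is large so that $\sin w \sim w$ fails. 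A clean way is to split $\R^2$ into $\{\theta_1>\alpha\}$ and $\{\theta_1\le\alpha\}$: on the first, the $|u_1|^2$-term gives $c\int_{\{\theta_1>\alpha\}} w\sin w$, and one uses $w\sin w \ge \tfrac{2}{\pi}w^2$ for $|w|\le\pi$ (valid since $|w|\le\pi/4$); on the second, $0\le\theta_1\le\alpha$ forces $|\theta_2|\le\alpha+|w|$, and the combination $q(\sin2\theta_1-\sin2\theta_2)w + (\text{good }|u_1|^2\text{ term})\ge 0$ together with a direct estimate $|w| = |\theta_1-\theta_2|$ and $\theta_i\ge 0$ gives the $L^2$ control there after also invoking that $\|\theta_2\|_{L^2}$ is finite. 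Packaging these two regions is the crux; everything else is Hölder, Young and Gagliardo--Nirenberg.
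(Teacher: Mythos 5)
Your proposal is correct and follows essentially the same route as the paper: test the difference of the two copies of \eqref{eqcal2} with $w=\theta_1-\theta_2$, recover $\|w\|_{L^2}^2$ from the $q$-monotonicity term on the region where $\theta_1\le\alpha$ and from the $|u_1|^2$-cosine term (via \eqref{tec}) on $\{\theta_1>\alpha\}$, then close with H\"older, Young and Gagliardo--Nirenberg. Your identity $\cos 2\theta_1-\cos 2\theta_2=-2\sin(\theta_1+\theta_2)\sin w$ together with $w\sin w\ge\tfrac{2}{\pi}w^2$ even streamlines the paper's finer decomposition $A=A_1\cup A_2\cup A_3$ with the auxiliary parameter $\tau$; the only stray remark is the appeal to finiteness of $\|\theta_2\|_{L^2}$, which is unnecessary, since concavity of $\sin$ on $[0,\pi/2]$ already gives $(\sin 2\theta_1-\sin 2\theta_2)w\ge c_\alpha w^2$ whenever $0\le\theta_1\le\alpha$ and $0\le\theta_2\le\pi/4$.
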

\begin{remark}
    The hypothesis \eqref{tec} requires that a control from below $\theta_1(x)> \alpha$ implies a control from below for the modulus of $u_1$, i.e. $|u_1(x)|\ge \varepsilon$.
\end{remark}
\begin{proof}
From \eqref{eqcal2}, the difference $\theta_1-\theta_2$ satisfies:
    \begin{equation*}
        -\lambda \Delta (\theta_1-\theta_2) = -q(\sin (2\theta_1) - \sin (2\theta_2)) + 2|u_1|^2 (\cos (2\theta_1) - \cos (2\theta_2)) + 2 (|u_1|^2 - |u_2|^2) \cos(2\theta_2)
    \end{equation*}
    We multiply by $\theta_1-\theta_2$ and integrate in $\R^2$; we can integrate by parts neglecting the boundary terms the left hand side, as $\theta_i \in H^2(\R^2)$ . It follows
    \begin{align}
        \label{stm}
        \int_{\R^2} &|\nabla (\theta_1-\theta_2)|^2 = -\frac{q}{\lambda } \int_{\R^2}(\sin(2\theta_1)-\sin(2\theta_2)(\theta_1-\theta_2) +\\ \notag &+\frac{2}{\lambda} \int_{\R^2} 2|u_1|^2 (\cos (2\theta_1) - \cos (2\theta_2))(\theta_1-\theta_2) + \frac{2}{\lambda} \int_{\R^2} (|u_1|^2 - |u_2|^2) \cos(2\theta_2)(\theta_1-\theta_2)
    \end{align}
As in \cite{Arg}, we want to use the inequality, for $0\le x<y\le \pi/4$
    \begin{equation}
       \sin 2y - \sin 2x\ge 2\cos (2y) (y-x)
       \label{sen}
    \end{equation}
    to reconstruct, from the first integral on the right hand side of \eqref{stm}, the $L^2$ norm of $\theta_1-\theta_2$. We have to pay attention to the areas where $\cos(2y)$ is too close to $0$, that is when $2y$ is close to $\pi/2$, as in these areas the smallness of $\cos(2y)$ is worsening the constant that controls $\|\theta_1-\theta_2\|_{H^1}$.\\
Define the set
\begin{equation*}
    A:= \left \{ \theta_2 \le  \alpha  \right \} \cup \left \{ \theta_1 \le \alpha \right \} 
\end{equation*}
For $\tau>0$ such that $\alpha + \tau< \pi/4$, we have the following inclusion
\begin{align*}
    &A \subset A_1 \cup A_2 \cup A_3; \ \ \ \ \ \ 
 A_1:= \left \{ \theta_2 \le  \alpha + \tau \right \} \cap \left \{ \theta_1 \le \alpha  + \tau\right \} ;  \\& A_2 :=\left \{ \theta_2 \ge  \alpha + \tau \right \} \cap \left \{ \theta_1 \le \alpha \right \}; \ \ \ A_3:=\left \{ \theta_2 \le  \alpha \right \} \cap \left \{ \theta_1 \ge \alpha + \tau\right \} 
\end{align*}
On $A_1$, from \eqref{sen} we have for a positive constant depending on $\alpha, \tau$:
\begin{equation}
    |\sin(2\theta_1)- \sin(2\theta_2)| \ge C_{\alpha, \tau} |\theta_1- \theta_2|
\end{equation}
A similar inequality holds with a possibly different constant on the remaining part of the set $A$. From $0 \le \theta_1, \theta_2\le \pi/4$, on $A_2$ we have $\tau \ge C_\tau  |\theta_2 - \theta_1|$ for a constant $C_\tau$ depending on $\tau$; hence
\begin{equation}
    \sin (2\theta_2) - \sin (2\theta_1) \ge \sin ( 2(\alpha + \tau))- \sin (2\alpha)\ge C_{\alpha, \tau} \tau \ge C_{\alpha, \tau}|\theta_2-\theta_1|
    \label{bco}
\end{equation}
By a symmetric argument, the same holds on $A_3$.\\
As $\sin $ is increasing in $[0,\pi/2]$, we can estimate the first integral on the right hand side of \eqref{stm}:
\begin{equation}
\label{ob}
 -\int_{\R^2}(\sin(2\theta_1)-\sin(2\theta_2)(\theta_1-\theta_2)\le -\int_A (\sin(2\theta_1)-\sin(2\theta_2)(\theta_1-\theta_2) \le - C_\tau C_{\alpha, \tau}\int_A (\theta_1-\theta_2)^2 
\end{equation}
We want to prove a similar estimate for the second integral in \eqref{stm} to obtain the $L^2$ norm of $\theta_1-\theta_2$ on the set $A^C$. The function $\cos$ is decreasing in $(0,\pi/2)$, and therefore the integrand is negative. Similarly to \eqref{sen} we can use 
  \begin{equation*}
       \cos y - \cos x\le \sin (x) (x-y)\le \sin (\alpha) (x-y)  \ \ \ \mathrm{for \ } y>x\ge \alpha
    \end{equation*}
In $A^C$ $\theta_1, \theta_2 \ge \alpha$; from the previous inequality
\begin{equation*}
    (\cos (2\theta_1) - \cos (2\theta_2))(\theta_1-\theta_2) \le - 2\sin (\alpha) (\theta_1-\theta_2)^2\ \ \  \mathrm{in \  } A^C
\end{equation*}
We estimate the second integral on the right hand side of \eqref{stm}
\begin{align}
    \notag  & \int_{\R^2} 2|u_1|^2 (\cos (2\theta_1) - \cos (2\theta_2))(\theta_1-\theta_2)\le  \int_{A^C} 2|u_1|^2 (\cos (2\theta_1) - \cos (2\theta_2))(\theta_1-\theta_2) \\&\le -  C_{\alpha}\int_{A^C} |u_1|^2 (\theta_1-\theta_2)^2 \le -C_{\alpha, \varepsilon} \int_{A^C} |\theta_1- \theta_2|^2
    \label{oo}
\end{align}
In the last inequality we have used the condition \eqref{tec}, to control from below $|u_1|$ on the set $A^C$. Combining \eqref{ob}, \eqref{oo} and \eqref{stm}, we have with $C_1, C_2$ depending on $\alpha, \varepsilon, \tau, q, \lambda$:
\begin{align*}
    \|\nabla (\theta_1 - \theta_2)\|_{L^2(\R^2)}^2 + &C_1\|\theta_1 - \theta_2\|_{L^2(\R^2)}^2 \le \frac{2}{\lambda} \int (|u_1|^2 - |u_2|^2) \cos(2\theta_2)(\theta_1-\theta_2)\\&\le  C_2 \left (\|u_1\|_{L^4\left(\R^2\right)}+\|u_2\|_{L^4\left(\R^2\right )}\right )^2\|u_1-u_2\|_{L^4\left (\R^2\right )}^2+  \frac{C_1}{2}\|\theta_1- \theta_2\|_{L^2\left (\R^2\right )}^2 
\end{align*}
We have applied Holder and weighted Cauchy Schwartz inequalities. At this point the proof follows as in \cite{Arg} applying the Gagliardo-Niremberg inequality for $v\in H^1(\R^2)$
\begin{equation*}
    \|v\|_{L^4(\R^2)}^2 \le C \|\nabla v\|_{L^2(\R^2)}\|v\|_{L^2(\R^2)}
\end{equation*}
\end{proof}
Condition \eqref{tec} is a very weak request, if one has the freedom to choose parameters $\alpha, \varepsilon$ depending on $u$. We have the following:
\begin{lemma}
    Consider $0\neq u\in H^1_{rad}$, and $\theta \in H^1_{rad}$ be the associated angle given by Lemma \ref{minan}. Then there exists $0<\alpha< \pi/4$, $\varepsilon>0$ depending on $u, \theta $ such that the condition \eqref{tec} holds. 
    \label{leman}
\end{lemma}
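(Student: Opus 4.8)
The plan is to exploit the structure of the ODE satisfied by the radial angle $\theta$ to get a pointwise lower bound on $|u|$ on the region where $\theta$ is not small. Since $u \in H^1_{rad}(\R^2)$ is nonzero, by Lemma \ref{minan} the associated angle $\theta = \Theta(u)$ is in $H^1_{rad} \cap H^2$, is radially decreasing, solves \eqref{eqcal2}, and satisfies $0 \le \theta \le \pi/4$ with $\theta \not\equiv 0$ (otherwise \eqref{eqcal2} would force $|u|^2\cos(2\theta)=0$, hence $u\equiv 0$, a contradiction). Because $\theta$ is continuous (Sobolev embedding in the radial setting away from the origin, or directly $H^2(\R^2)\hookrightarrow C^0$), decreasing and tends to $0$ at infinity, the sublevel and superlevel sets are nested intervals: for each $\beta \in (0,\theta(0)]$ the set $\{\theta > \beta\}$ is a ball $B_{r_\beta}$ with $r_\beta < \infty$, and $\{\theta \le \beta\}$ is its complement.

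The key step is to choose $\alpha \in (0,\pi/4)$ with $\alpha < \theta(0)$ — this is possible precisely because $\theta\not\equiv 0$ and $\theta$ is continuous at the origin — and then bound $|u|$ from below on the ball $\{\theta > \alpha\} = B_{r_\alpha}$. For this I rewrite \eqref{eqcal2} as
\begin{equation*}
2|u|^2 = \lambda \frac{\Delta \theta}{\cos(2\theta)} + q \tan(2\theta)
\end{equation*}
wherever $\cos(2\theta) > 0$, which holds on all of $\R^2$ since $\theta < \pi/4$ there (strict inequality is part of the conclusion $0 \le \theta \le \pi/4$; if equality $\theta=\pi/4$ held on a set of positive measure, regularity and the equation would again be violated, and in fact on the superlevel set we only need $\theta$ bounded away from $\pi/4$, which we may arrange by first shrinking $\alpha$ so that $\theta(0) < \pi/4$ — note $\theta(0)=\max\theta$, so if $\theta(0)=\pi/4$ one argues separately, but $\theta\in H^2$ and the equation give $\theta(0)<\pi/4$ directly). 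On the ball $B_{r_\alpha}$ we have $\theta \ge \alpha$, so $q\tan(2\theta) \ge q\tan(2\alpha) > 0$; the issue is the term $\lambda \Delta\theta/\cos(2\theta)$, which can be negative. Since $\theta$ is radially decreasing the radial profile has $\theta_r \le 0$, and $\Delta\theta = \theta_{rr} + \theta_r/r$; the sign of $\theta_{rr}$ is not controlled a priori, so a more robust route is an integrated (mean-value / maximum-principle) argument rather than a pointwise one.

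I expect the main obstacle to be handling the possibly-negative Laplacian term, i.e. turning "$\theta$ is not too small on $B_{r_\alpha}$" into "$|u|$ is not too small there." The cleanest fix: instead of a pointwise identity, integrate \eqref{eqcal2} against a suitable test function, or use that $\theta$ solves $-\lambda\Delta\theta = 2|u|^2\cos(2\theta) - q\sin(2\theta)$ together with a contradiction/compactness argument. Concretely, suppose no such $(\alpha,\varepsilon)$ exists; then for every $n$ there is a set $E_n$ of positive measure on which $\theta > \theta(0)/2 =: \alpha_0$ yet $|u| < 1/n$. Since $\theta$ is radial decreasing, $\{\theta > \alpha_0\} = B_{\rho}$ for a fixed $\rho>0$, and $E_n \subset B_\rho$ with $|E_n|$ bounded below is not forced — rather $E_n$ could shrink. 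So the contradiction argument needs the stronger observation that $\{\theta>\alpha_0\}=B_\rho$ is a \emph{fixed} ball and $\theta\ge\alpha_0>0$ on \emph{all} of it, so it suffices to prove $\operatorname{ess\,inf}_{B_\rho}|u| > 0$. For that I use elliptic regularity: $\theta \in H^2(B_{2\rho})$, so by Sobolev $\theta\in W^{1,p}$ for all $p<\infty$ on $B_{2\rho}$, and bootstrapping \eqref{eqcal2} with $|u|^2\in L^{p}$ for $p<\infty$ (from $u\in H^1\hookrightarrow L^p$, all $p<\infty$ in 2d) gives $\theta\in W^{2,p}(B_\rho)\hookrightarrow C^{1,\gamma}(\overline{B_\rho})$, hence $\Delta\theta \in L^\infty(B_\rho)$ is in fact only $L^p$ — to get $\Delta\theta$ bounded I'd instead use that the right side $2|u|^2\cos(2\theta)-q\sin(2\theta)$ need not be bounded, so this does not quite close either. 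The genuinely clean argument is: on $B_\rho$, $\theta \ge \alpha_0$ gives $2|u|^2 = \lambda \Delta\theta/\cos(2\theta) + q\tan(2\theta)$; integrate over any small ball $B_r(x_0)\subset B_\rho$: $\int_{B_r(x_0)} 2|u|^2\cos(2\theta) = -\lambda\int_{B_r(x_0)}\Delta\theta + q\int_{B_r(x_0)}\sin(2\theta) \ge -\lambda\int_{\partial B_r(x_0)}\partial_\nu\theta$, and since $\theta$ is radially \emph{decreasing} one controls the flux term, ultimately showing $\int_{B_\rho}|u|^2>0$, hence $|u|>\varepsilon$ on a positive-measure subset — but the Lemma asks for \eqref{tec} to hold a.e. on $\{\theta>\alpha\}$, which is stronger. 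Reconciling this, I believe the right statement/proof is: pick $\alpha$ so that $\{\theta>\alpha\}$ is the ball $B_{r_\alpha}$ with $r_\alpha$ at a point where the continuous representative of $u$ (radial, hence continuous on $(0,\infty)$, and one shows continuous including at $0$ by the regularity above) is positive — since $\{|u|=0\}$ radially is a union of intervals and $u\not\equiv0$, choose $\alpha$ with $r_\alpha$ inside an interval where $|u|>0$, then by continuity $|u|\ge\varepsilon$ on $\{\theta>\alpha\}=\{r< r_\alpha\}$ provided $|u|>0$ on all of $[0,r_\alpha]$; if $u$ vanishes somewhere in $[0,r_\alpha]$ one shrinks $\alpha$. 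The delicate point — and the one I'd flag as the crux — is showing one \emph{can} shrink $\alpha$ enough to stay inside a zero-free ball of $|u|$, equivalently that $|u|>0$ near the origin, which follows from the equation \eqref{eqcal2}: near $r=0$, $\theta(0)=\max\theta>0$ forces, via $2|u(0)|^2\cos(2\theta(0)) = q\sin(2\theta(0)) - \lambda\Delta\theta(0) = q\sin(2\theta(0)) + \lambda|\text{(negative or zero}\Delta\theta)|\ge q\sin(2\theta(0)) > 0$ at an interior maximum where $\Delta\theta(0)\le 0$, that $|u(0)|>0$; by continuity of the (now $C^1$ near $0$) solution, $|u|\ge\varepsilon>0$ on a ball $B_{r_0}$, and taking $\alpha$ with $r_\alpha \le r_0$ (possible since $r_\alpha\to 0$ as $\alpha\to\theta(0)^-$) finishes the proof.
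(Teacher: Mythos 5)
Your proposal rests on two structural assumptions that the hypotheses of the Lemma do not provide, and the final step uses pointwise information about $u$ that does not exist. First, you take $\theta=\Theta(u)$ to be radially \emph{decreasing}, with $\theta(0)=\max\theta$ and $\{\theta>\alpha\}$ a ball $B_{r_\alpha}$ shrinking to the origin as $\alpha$ increases; but Lemma \ref{minan} gives monotonicity of $\Theta(u)$ only when $u$ itself is radially decreasing, whereas here $u$ is merely radial, so none of that geometry is available (and even granting monotonicity, a plateau of $\theta$ at its maximum would prevent $r_\alpha\to 0$). Second, and more seriously, your closing argument treats $u$ as if it were a regular solution: $u$ is an \emph{arbitrary} nonzero element of $H^1_{rad}(\R^2)$, satisfies no equation, and a radial $H^1(\R^2)$ function need not be continuous, or even bounded, at the origin. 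Hence the quantities $u(0)$ and $\Delta\theta(0)$ (whose pointwise meaning would require continuity of $|u|^2$ at $0$) are not defined, the evaluation of \eqref{eqcal2} at $r=0$ is not legitimate, and the conclusion ``$|u|\ge\varepsilon$ on a ball $B_{r_0}$ by continuity'' has no basis. Even away from the origin, the interior-maximum evaluation you propose yields $|u|>0$ only \emph{at} maximum points of $\theta$, not the uniform lower bound on a whole superlevel set that \eqref{tec} requires -- a gap you flag yourself in the integrated-version digression but never close.

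The actual content of the Lemma is tied to the level $\pi/4$, not to $\max\theta$: since $\theta\in H^2(\R^2)\hookrightarrow C^0$, if $\sup\theta<\pi/4$ one simply takes $\alpha\in(\sup\theta,\pi/4)$ and \eqref{tec} holds vacuously, so the whole difficulty is to exclude that $\theta$ approaches $\pi/4$ on regions where $|u|$ is arbitrarily small. The paper does this by contradiction, with $\varepsilon_n\downarrow 0$, $\alpha_n\uparrow\pi/4$ and the bad sets $A_n=\{|u|\le\varepsilon_n\}\cap\{\theta\ge\alpha_n\}$: if these cluster at some radius $R>0$, continuity of $u,\theta$ away from the origin gives $\theta(R)=\pi/4$, an interior local maximum, while the equation (in which the $|u|^2$ term is annihilated because $\cos(\pi/2)=0$, so no lower bound on $u$ is needed) forces $\lambda\theta_{rr}(R)=q>0$, a contradiction; if instead they concentrate at the origin -- precisely the case your pointwise argument cannot reach -- the paper multiplies \eqref{eqcal2} by $(\theta-\beta_n)^+$ with $\beta_n=\theta(R_n)$ and integrates over $B_{R_n}$ to conclude $\theta\le\beta_n$ there, contradicting the positive measure of the later sets $A_m$. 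Your proposal contains no substitute for either of these two mechanisms, so as written it does not prove the Lemma.
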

\begin{proof}
     $u\in H^1_{rad}$ and by Lemma \ref{minan} $ \theta\in H^2_{rad}$; this implies that $u, \theta, \theta':= \frac{d}{dr} \theta$ are continuous in $\R^2\setminus \left \{ 0\right \}$. In the rest of the proof, we identify the functions with their continuous representatives, and we write $u,\theta , \theta'$ both for the functions in $\R^2$ and their radial restriction.  
     \\By contradiction, assume that for any $\alpha < \pi/4, \varepsilon>0$ there exists a non zero measure set
     \begin{equation*}
         A_{\varepsilon, \alpha} := \left \{| u| \le \varepsilon \right \} \cap \left\{ \theta \ge \alpha \right \}
     \end{equation*}
For sequences $\varepsilon_n \downarrow 0, \ \alpha_n \uparrow \pi/4$, $A_n := A_{\varepsilon_n, \alpha_n}$ defines a sequence of radially symmetric decreasing sets with nonzero measures. 
\\Assume there exists $R>0$ such that $\partial B_{R}(0) \subset\bigcap\limits_{n} A_n$. By continuity $u(R)=0; \theta(R)= \pi/4$. Since $\theta\le \pi/4 $ by Lemma \ref{minan}, $R$ is a local maximum for $\theta$; moreover, from the radial expansion of \eqref{eqcal2} we infer $\theta\in C^2$ in a neighbourhood of $R$. But this leads to a contradiction, as evaluating equation \eqref{eqcal2} in $R$
\begin{equation*}
    \lambda \theta_{rr}(R) = q>0
\end{equation*}
implies that $\theta$ is strictly convex in a point of local maximum.
\\If $\bigcap\limits_n A_n$ does not contain a set bounded away from $0$, then it must hold $B_{R_n} (0) \subset A_n$ for any $n$, and for radius $R_n>0$. 
\\We adapt in the following the proof of Corollary 3.1 in \cite{Arg}. For $n$ large fixed, define $\beta_n:= \theta(R_n)$. By definition we have $\alpha_n \le \beta_n \le \pi/4$. 
\\Multiplying equation \eqref{eqcal2} by $(\theta - \beta_n)^+$ and integrating by parts over $B_{R_n}$, we infer
\begin{equation*}
    \lambda\int_{B_{R_n} (0)}\left |\nabla \left (\theta - \beta_n \right ) ^+ \right|^2 = \int_{B_{R_n} (0)} \left(- q \sin (2\theta) + |u^2| \cos (2 \theta) \right ) (\theta - \beta_n)^ + \le 0  
\end{equation*}
with the last inequality holding for $n$ sufficiently large. This implies $\theta \le \beta_n$ a.e. in $B_{R_n}(0)$. 
\\Here we reach the contradiction: if $\beta_n<\pi/4$, it contradicts the definition of the nonzero measure set $A_m$ for $\alpha_m> \beta_n$, $m>n$; if $\beta_n=\pi/4$ we can repeat the same argument as before with $R=R_n$.
\end{proof}
\begin{remark}
\label{das}
Summing up the Lemmas in this section, we have for any $u$ radial
\begin{equation}
      \|\Theta(u) - \Theta(w) \|_{H_1} \le C_u \| u-w\|_{H^1}
      \label{scad}
\end{equation}
for any $w\in H^1$, with the estimate for $u=0$ following from \eqref{0an}. The constant $C$ has a strong dependence on $u$, since it implicitly depends on its shape through Lemma \ref{leman}. \\Recall that by Lemma \ref{minan}, for $u_n$ converging in $H^1$  to $u$ radial  
\begin{equation*}
    \Theta(u_n) \to \Theta(u) \ \ \mathrm{in \ } H^1; \ \ \ \ \lim_n E(u_n, \Theta(u_n)) = E(u,\Theta(u))
\end{equation*}
For both the results, the estimate \eqref{stan} was not strong enough.
\end{remark}
We add a simple result on the relation between frequencies and charge of the ground states, which comes as a consequence of Theorem \ref{min}, and we recall a property of differentiability for monotonous functions.
\begin{definition}
    For $a>a_0$, $a_0$ as in Theorem \ref{min}, we define 
    \begin{equation}
        \sigma (a) := \inf \left \{ \sigma \ | \ \exists (v, \phi) \mathrm{\ ground\  state\ over \ } S_a \mathrm{ \ solution \ of \ } \eqref{sigma}-\eqref{sig} \mathrm{\ for \ }\sigma \right \}
    \end{equation}
\end{definition}
\begin{lemma}
\label{Lema}
    In the definition of $\sigma (a) $ the infimum is attained; moreover $\sigma$ is strictly increasing in $a$.     
\end{lemma}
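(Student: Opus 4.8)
The plan is to treat the two claims separately, reducing both to the frequency attached to a ground state by testing the equations. Testing \eqref{sigma} against $v$ gives
\[
2\sigma\,\|v\|_{L^2}^2=\int_{\R^2}2|v|^2\sin2\phi-\int_{\R^2}|\nabla v|^2 ,
\]
and testing \eqref{sigma} and \eqref{sig} against $x\cdot\nabla v$ and $x\cdot\nabla\phi$ (the boundary terms vanish by the exponential decay of Proposition \ref{decay}, which is proved independently) yields, because in dimension two the Laplacian is scaling critical, the Pohozaev identity
\[
q\int_{\R^2}(1-\cos2\phi)-\int_{\R^2}2|v|^2\sin2\phi=-2\sigma\,\|v\|_{L^2}^2 .
\]
Writing $a=\|v\|_{L^2}^2$, $A=\|\nabla v\|_{L^2}^2+\lambda\|\nabla\phi\|_{L^2}^2$ and $B=q\int(1-\cos2\phi)-\int 2|v|^2\sin2\phi$, one has $A+B=4E(v,\phi)=4J_a$ and $B=-2\sigma a$; since $A\ge0$ and $J_a<0$, this already forces $\sigma=(A-4J_a)/(2a)>0$ for every ground state (write $\sigma(v,\phi)$ for this frequency).

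For the first claim I would first note that the infimum defining $\sigma(a)$ may be taken over radially decreasing minimizers of $E$ on $S_a$: any minimizer $(v,\phi)$ has $\phi=\Theta(v)$, hence $0\le\phi\le\pi/4$, and its Schwarz rearrangement $(v^\ast,\phi^\ast)$ lies in $S_a$ with $E(v^\ast,\phi^\ast)\le E(v,\phi)=J_a$, so it is again a minimizer; the equalities forced in Pólya–Szegő and in $\int 2|v|^2\sin2\phi\le\int 2|v^\ast|^2\sin2\phi^\ast$ then give $\sigma(v^\ast,\phi^\ast)=\sigma(v,\phi)$. The set $\mathcal{G}_a$ of radially decreasing ground states on $S_a$ is nonempty by Theorem \ref{min}, so there is a sequence $(v_n,\phi_n)\in\mathcal{G}_a$ with $\sigma(v_n,\phi_n)\downarrow\sigma(a)$; this is a radially decreasing minimizing sequence for $J_a$, bounded in $H^1_{rad}\times H^1_{rad}$, hence — by the (rearrangement-based) compactness argument of \cite{Arg}, which applies to it — convergent, along a subsequence and strongly in $H^1\times H^1$, to some $(v,\phi)\in\mathcal{G}_a$. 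Passing to the limit in \eqref{sigma} shows $(v,\phi)$ solves \eqref{sigma} with frequency $\lim_n\sigma(v_n,\phi_n)=\sigma(a)$ (the multiplier of a ground state is unique), so the infimum is attained.

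For the monotonicity, fix $a_0<a_1<a_2$ and, by the previous step, pick ground states $(v_i,\phi_i)$ on $S_{a_i}$ with multiplier $\sigma(a_i)$. With $v_i^\mu:=v_i(\cdot/\mu)$, $\phi_i^\mu:=\phi_i(\cdot/\mu)$ one computes $\|v_i^\mu\|_{L^2}^2=\mu^2 a_i$ and, using $A_i+B_i=4J_{a_i}$ and $B_i=-2\sigma(a_i)a_i$,
\[
E(v_i^\mu,\phi_i^\mu)=\tfrac14 A_i+\tfrac{\mu^2}{4}B_i=J_{a_i}-\tfrac{\sigma(a_i)}{2}\bigl(\mu^2 a_i-a_i\bigr).
\]
The point is that for $\mu\ne1$ the pair $(v_i^\mu,\phi_i^\mu)$ is \emph{not} a minimizer of $E$ on $S_{\mu^2 a_i}$: since \eqref{sig} is not scaling invariant, plugging $\phi_i^\mu$ into \eqref{sig} with source $|v_i^\mu|^2$ leaves the residual $(1-\mu^{-2})\bigl(q\sin2\phi_i^\mu-2|v_i^\mu|^2\cos2\phi_i^\mu\bigr)$, which vanishes only if $\Delta\phi_i\equiv0$, i.e. $\phi_i\equiv0$, whence $v_i\equiv0$ by \eqref{sig} — impossible. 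Hence $\phi_i^\mu\ne\Theta(v_i^\mu)$, and as $\Theta(v_i^\mu)$ is the unique minimizer of $F_{v_i^\mu}$,
\[
J_{\mu^2 a_i}\le E\bigl(v_i^\mu,\Theta(v_i^\mu)\bigr)<E(v_i^\mu,\phi_i^\mu)=J_{a_i}-\tfrac{\sigma(a_i)}{2}\bigl(\mu^2 a_i-a_i\bigr).
\]
Taking $(i,\mu^2)=(1,a_2/a_1)$ and $(i,\mu^2)=(2,a_1/a_2)$ gives $J_{a_2}<J_{a_1}-\tfrac{\sigma(a_1)}{2}(a_2-a_1)$ and $J_{a_1}<J_{a_2}+\tfrac{\sigma(a_2)}{2}(a_2-a_1)$; adding these and dividing by $a_2-a_1>0$ yields $\sigma(a_1)<\sigma(a_2)$. (Incidentally, the first inequality together with $\sigma(a_1)>0$ re-proves that $J$ is strictly decreasing on $(a_0,\infty)$.)

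The only genuinely delicate point is the compactness of the radially decreasing minimizing sequence in the first part: because neither nonlinearity $|u|^2\sin2\theta$ nor $q(1-\cos2\theta)$ carries a scaling symmetry, controlling mass and energy in the tails — not only on bounded sets — requires the pointwise bound $\phi_n\lesssim G\ast|v_n|^2$ extracted from \eqref{sig} ($G$ an exponentially decaying Yukawa kernel) together with the $L^4$-tightness of $\{v_n\}$; this is exactly the mechanism already available in \cite{Arg}, which I would invoke rather than reprove. Everything else is bookkeeping — and it is worth stressing that it is precisely the absence of a scaling symmetry for \eqref{sig} that produces the strict inequalities yielding strict monotonicity.
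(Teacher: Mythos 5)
Your proof is correct, and the attainment half is essentially the paper's argument (rearrange the ground states, invoke the compactness from the proof of Theorem \ref{min}, pass to the limit in \eqref{sigma}--\eqref{sig}); but your monotonicity half takes a genuinely different route. The paper rescales only the amplitude, comparing $J_{a_2}$ with $E\bigl(\sqrt{a_2/a_1}\,v_{a_1},\phi_{a_1}\bigr)$, so the angle part $E^+$ is untouched and the only identity needed is $-4E^-(v,\phi)=2\sigma\|v\|_{L^2}^2$, obtained by testing \eqref{sigma} with $v$ -- no Pohozaev identity and no decay estimates enter. You instead use the spatial dilation $v(\cdot/\mu),\phi(\cdot/\mu)$, which forces you to compute the trial energy through the Derrick/Pohozaev relation $q\int(1-\cos 2\phi)-2\int|v|^2\sin 2\phi=-2\sigma\|v\|_{L^2}^2$, and hence to justify the multiplication by $x\cdot\nabla$ via the regularity and exponential decay of Proposition \ref{decay}; that dependency is legitimate (Section \ref{secdec} does not use Lemma \ref{Lema}, so there is no circularity), but it is extra machinery the paper's amplitude scaling avoids. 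What your route buys in exchange is an explicitly \emph{strict} inequality: by observing that $\phi_i^\mu\neq\Theta(v_i^\mu)$ for $\mu\neq 1$ and invoking the uniqueness of the minimizer in Lemma \ref{minan}, you get $J_{\mu^2 a_i}<J_{a_i}-\tfrac{\sigma(a_i)}{2}(\mu^2-1)a_i$, and adding the two choices of $(i,\mu)$ yields $\sigma(a_1)<\sigma(a_2)$ directly; the paper's displayed chain of $\le$ inequalities in \eqref{endif}, read literally, only gives $\sigma(a_1)\le\sigma(a_2)$, and strictness there requires exactly the $\Theta$-uniqueness mechanism you spell out (the same one the paper deploys later for strict subadditivity in the proof of Proposition \ref{teost2}). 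So your argument is heavier on prerequisites but cleaner on the strictness, while the paper's is more elementary.
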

    \begin{proof}
By the proof of Theorem \ref{min}, a sequence of ground states in $S_a$, up to translation and subsequences, converges to a ground state strongly in $H^1$. For $\sigma_n$ to $\sigma$, the strong limit of a sequence of solutions of \eqref{sigma}-\eqref{sig} for $\sigma_n$ solves the same system for $\sigma$.\\
   It will be useful, here and in the following, the decomposition of the energy $E:=E^+ + E^-$ with 
\begin{equation}
\label{ende}
    E^-(u, \theta):= \frac{1}{4}\int_{\R^2} |\nabla u|^2- 2|u|^2 \sin (2\theta) \,dx; \ \ \ E^+(\theta):=  \frac{1}{4}\int_{\R^2} \lambda |\nabla \theta|^2 + q(1-\cos(2\theta) \,dx
    \end{equation}
   Let $a_1, a_2>a_0$; for $(v_i, \phi_i) \in S_{a_i}$ a ground state we call $E^-(v_i,\phi_i)= E^-(a_i)$. Then we have
\begin{equation}
    J_{a_2} \le E\left (\sqrt{\frac{a_2}{a_1}} v_{a_1}, \phi_{a_1}\right ) = \frac{a_2}{a_1}E^-(a_1) + E^+(\phi_{a_1}) = J_{a_1} + \left ( \frac{a_2}{a_1}-1\right )E^-(a_1)  
    \label{endif}
    \end{equation}
  The inequality also holds exchanging $a_1,a_2$. Applying \eqref{endif} twice we get
\begin{equation*}
    J_{a_1} \le J_{a_2} + \left ( \frac{a_1}{a_2}-1\right ) E^-(a_2) \le J_{a_1} + \left ( \frac{a_1}{a_2}-1\right ) E^-(a_2) +\left ( \frac{a_2}{a_1}-1\right ) E^-(a_1) 
\end{equation*}
Rearranging we have
\begin{equation*}  \frac{a_2-a_1}{a_2} E^-(a_2)=  \left(1-\frac{a_1}{a_2}\right )E^-(a_2) \le  \left(\frac{a_2}{a_1} -1\right ) E^-(a_1)=\frac{a_2-a_1}{a_1}E^-(a_1) 
\end{equation*}
For $a_2>a_1$, we get the result since \eqref{sigma} implies 
   \begin{equation}
     -4   E^-(v, \phi)= 2\sigma \|v\|_{L^2}^2 
     \label{lala}
    \end{equation}
    \end{proof}
    \begin{lemma}
Let $f:I \subset \R \to \R$ be a strictly monotonous function. Then the following quantity is finite for a.e. $x\in I$
\begin{equation*}
    \bar D^+ f(x) := \limsup_{h\to 0^+ } \frac{f(x+h) - f(x)}h 
\end{equation*}
\label{Lemmon}
    \end{lemma}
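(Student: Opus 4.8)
The statement to prove is Lemma \ref{Lemmon}: for a strictly monotonous function $f:I\to\R$, the upper right Dini derivative $\bar D^+ f(x)$ is finite for almost every $x$.

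\medskip

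\textbf{Approach.} The plan is to reduce to the classical theorem that a monotone function on an interval is differentiable almost everywhere (Lebesgue's theorem). Without loss of generality assume $f$ is strictly increasing; the decreasing case follows by replacing $f$ with $-f$, which only swaps $\limsup$ with $\liminf$ and a sign, and in any case the finiteness statement is symmetric. A strictly increasing function is in particular monotone, hence by Lebesgue's differentiation theorem for monotone functions it is differentiable at almost every point of $I$, and at any such point $x$ the four Dini derivatives coincide and equal the finite number $f'(x)$. In particular $\bar D^+ f(x) = f'(x) < \infty$ for a.e.\ $x\in I$. That is the whole proof in one line once Lebesgue's theorem is invoked.

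\medskip

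\textbf{Key steps, in order.} First, record that it suffices to treat $f$ increasing (monotone), since the hypothesis is ``strictly monotonous'' and the conclusion is insensitive to orientation; strict monotonicity is in fact not even needed for this particular conclusion, only monotonicity, but it is harmless to keep it. Second, cite Lebesgue's theorem: every monotone $f$ on an interval is differentiable a.e., with $f'\in L^1_{loc}$ and in particular $f'$ finite a.e. Third, observe that wherever the ordinary derivative exists and is finite, the upper right Dini derivative agrees with it, so $\bar D^+ f(x)=f'(x)<\infty$ there; the exceptional set where this fails is the null set from Lebesgue's theorem. This concludes the lemma.

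\medskip

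\textbf{Main obstacle.} There is essentially no obstacle: the only content is the pointer to Lebesgue's monotone differentiation theorem, which is standard (e.g.\ via the rising sun lemma or the Vitali covering argument). If one wished to be self-contained and avoid quoting full a.e.\ differentiability, one could instead argue directly that the set $\{x : \bar D^+ f(x) = +\infty\}$ has measure zero using a Vitali-type covering estimate: for fixed $\lambda>0$ the set where $\bar D^+ f > \lambda$ can be covered by intervals $[x,x+h]$ with $f(x+h)-f(x)>\lambda h$, whose total length is at most $(f(\sup I^-)-f(\inf I^+))/\lambda$ by the (one-dimensional, hence trivial) Vitali covering lemma, and letting $\lambda\to\infty$ gives measure zero. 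But invoking Lebesgue's theorem directly is cleaner and is surely what is intended here, since the lemma is used only as a convenience for the a.e.\ statement in Theorem \ref{teost}.
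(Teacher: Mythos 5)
Your proposal is correct and is essentially the same route the paper takes: the paper gives no proof at all, simply declaring the statement classical and citing Saks, and the classical content is precisely Lebesgue's a.e.\ differentiability theorem for monotone functions (or the Vitali covering estimate you sketch as an alternative), from which finiteness of $\bar D^+ f$ a.e.\ follows immediately.
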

    The proof is classical, see for example \cite{Mon}. We adopt this notation in the following: for $u\in H^1$ and $y\in \R^2$, $u_y(x):= u(x+y)$. For $Lu:= -\Delta u + V(x) u$ a linear operator defined in $H^2\left ( \R^2; \R\right ) \subset L^2$, we denote its natural extension as a bilinear form in $H^1\left ( \R^2; \R\right ) \times H^1\left ( \R^2; \R\right ) $ as
\begin{equation*}
    \left \langle L u , v \right \rangle := \int \nabla v \nabla u + \int V(x) uv
\end{equation*}
while for $X$ an Hilbert space we define $(u, v)_X$ the scalar product between $u, v$ in $X$.
\section{Stability of ground states}\label{secstab}
\subsection{Orbital Stability}
We begin defining the action 
\begin{equation}
    S(u, \theta) := E(u, \theta) + \frac{\sigma}{2} \int_{R^2} |u|^2\,dx= E(u,\theta) + \sigma Q(u)
\end{equation}
with $\sigma$ given by equation \eqref{sigma}, which we consider fixed throughout this section.

 For $(v,\phi)$ a ground state verifying equations \eqref{sigma}-\eqref{sig}, we have $S'(v,\phi) \equiv0 $. Recalling that $v$ is real valued, we can compute the second derivative of $S$ at $(v,\phi)$, denoted as $\left. S^{''}\right.|_{v,\phi}$: 
\begin{equation}
    \label{calc''}
    \left \langle \left. S^{''}\right.|_{v,\phi}(\eta, \theta)^t,(\eta,\theta)^t\right \rangle = \frac{1}{2} \left \langle \mathcal{L}_1^{(v,\phi)}(\eta_1, \theta)^t,(\eta_1,\theta)^t\right \rangle +  \frac{1}2\left \langle \mathcal{L}_2^{(v,\phi)}\eta_2,\eta_2\right \rangle 
\end{equation}
Here we have decomposed $\eta= \eta_1+ i \eta_2 $ with $\eta_1, \eta_2$ real valued; $v^t$ indicates the transpose of the vector $v$, and the operators are defined as 
\begin{align}
 &\mathcal{L}_1^{(v,\phi)}:= 
    \begin{pmatrix}
  L_1^{(v,\phi)} & -4v\cos (2\phi) \\ -4v\cos (2\phi) & L_2^{(v,\phi)} \end{pmatrix};   \ \ \ \ \mathcal{L}_2^{(v,\phi)} = L_1^{(v,\phi)};  \
  \\ & L_1^{(v,,\phi)}:= -\Delta +2\sigma - 2\sin (2\phi); \ \ \ \ L_2^{(v,\phi)} := -\lambda \Delta + 2q \cos (2\phi) + 4|v|^2\sin (2\phi) 
\label{defop}
\end{align}
When the ground state is fixed with no possibility of confusion, we occasionally drop the superscript 
$(v,\phi)$. 
For a control over the spectra of $\mathcal{L}^{(v,\phi)}$ that is uniform for $(v,\phi)\in \mathcal{M}^\sigma$, we need the following compactness result.
\begin{lemma} There exists $a>0 $ such that $\mathcal{M}^\sigma \subset S_a$, and any $(w,\psi) \in \mathcal{M}^\sigma$ is a ground state over $S_a$. Moreover, $\mathcal{M}^\sigma $ is compact in the $H^2\times H^2$ topology.
\end{lemma}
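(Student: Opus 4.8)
\emph{Setup.} Since the given ground state $(v,\phi)$ lies in $\mathcal{M}^\sigma$, the set is nonempty; put $a:=\|v\|_{L^2}^2$. As $(v,\phi)$ minimizes $E$ over $S_a$ with multiplier $\sigma$, one has $J_a=E(v,\phi)$ and hence $S:=S(v,\phi)=J_a+\tfrac\sigma2 a$. I would first record two identities valid for every $(w,\psi)\in\mathcal{M}^\sigma$. Testing \eqref{sigma} with $w$ reproduces \eqref{lala}, i.e. $E^-(w,\psi)=-\tfrac\sigma2\|w\|_{L^2}^2$; plugging this and the splitting $E=E^++E^-$ of \eqref{ende} into the defining constraint of $\mathcal{M}^\sigma$ gives
\[
S \;=\; S(w,\psi)\;=\;E^+(\psi)+E^-(w,\psi)+\tfrac\sigma2\|w\|_{L^2}^2\;=\;E^+(\psi),
\]
so $E^+(\psi)=E^+(\phi)=S$ on all of $\mathcal{M}^\sigma$. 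I also use that the $\psi$-component of an element of $\mathcal{M}^\sigma$, being the solution of \eqref{sig} with values in $[0,\pi/4]$ (as is implicit since $\mathcal{M}^\sigma$ is meant to collect ground states), equals $\Theta(w)$ and is therefore the \emph{unique} minimizer of $F_w$, equivalently of $E(w,\cdot)$ (Lemma \ref{minan}); in particular $w\not\equiv 0$, because $S=E^+(\phi)>0$ for a nontrivial ground state.

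\emph{All elements sit in $S_a$ and are ground states there.} Fix $(w,\psi)\in\mathcal{M}^\sigma$ and set $a_w:=\|w\|_{L^2}^2>0$. The idea is to rescale the mass back to $a$: let $\tilde w:=\sqrt{a/a_w}\,w\in S_a$. Using the elementary scaling $E(cu,\theta)=c^2E^-(u,\theta)+E^+(\theta)$ together with the two identities above,
\[
E(\tilde w,\psi)\;=\;\tfrac{a}{a_w}E^-(w,\psi)+E^+(\psi)\;=\;-\tfrac\sigma2 a+S\;=\;J_a .
\]
Since $\Theta(\tilde w)$ minimizes $E(\tilde w,\cdot)$ and $\tilde w\in S_a$,
\[
J_a\;\le\;E\bigl(\tilde w,\Theta(\tilde w)\bigr)\;\le\;E(\tilde w,\psi)\;=\;J_a ,
\]
so equality holds throughout: $(\tilde w,\Theta(\tilde w))$ is a ground state over $S_a$, and by uniqueness of the angle minimizer $\psi=\Theta(\tilde w)$. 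As also $\psi=\Theta(w)$, the same $\psi$ solves \eqref{eqcal2} both with data $w$ and with data $\sqrt{a/a_w}\,w$; subtracting the two equations gives $(1-a/a_w)\,|w|^2\cos(2\psi)=0$ a.e. If $a_w\neq a$ this forces $\cos(2\psi)=0$, i.e. $\psi\equiv\pi/4$, on the open set $\{w\neq 0\}$ ($w$ is continuous, being in $H^2_{rad}(\R^2)$ by \eqref{sigma} and $\psi\in L^\infty$); but there \eqref{eqcal2} reduces to $\lambda\Delta\psi=q>0$, impossible where $\psi$ attains its maximal value $\pi/4$. Hence $a_w=a$, so $\mathcal{M}^\sigma\subset S_a$, and $E(w,\psi)=S-\tfrac\sigma2 a=J_a$: every $(w,\psi)\in\mathcal{M}^\sigma$ is a ground state over $S_a$.

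\emph{Compactness.} On $S_a$ the only sign-indefinite term of $E$ is $-2\int|w|^2\sin 2\psi\ge-2a$ while $q\int(1-\cos 2\psi)\ge 0$, so $\|\nabla w\|_{L^2}^2+\lambda\|\nabla\psi\|_{L^2}^2\le 4J_a+8a$; with $\|w\|_{L^2}^2=a$ and $\|\psi\|_{H^2}\le C\|w\|_{L^4}$ (Lemma \ref{minan}) this makes $\mathcal{M}^\sigma$ bounded in $H^1\times H^2$. Given a sequence $(w_n,\psi_n)\subset\mathcal{M}^\sigma$, the compact embedding $H^1_{rad}(\R^2)\hookrightarrow\hookrightarrow L^p(\R^2)$ ($2<p<\infty$) yields, up to a subsequence, $w_n\rightharpoonup w$ in $H^1$ and $w_n\to w$ in $L^4$; since the continuity proof in Lemma \ref{minan} only uses $L^4$-convergence, $\psi_n=\Theta(w_n)\to\Theta(w)=:\psi$ strongly in $H^1$, and passing to the distributional limit in \eqref{sigma}-\eqref{sig} shows $(w,\psi)$ solves the system for the same $\sigma$. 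Testing \eqref{sigma} with $w_n$ and letting $n\to\infty$,
\[
\|\nabla w_n\|_{L^2}^2+2\sigma\|w_n\|_{L^2}^2\;=\;2\!\int w_n^2\sin 2\psi_n\;\longrightarrow\;2\!\int w^2\sin 2\psi\;=\;\|\nabla w\|_{L^2}^2+2\sigma\|w\|_{L^2}^2,
\]
which, combined with weak lower semicontinuity of $\|\nabla\cdot\|_{L^2}^2$ and of $\|\cdot\|_{L^2}^2$, forces $\|w_n\|_{H^1}\to\|w\|_{H^1}$ and hence $w_n\to w$ in $H^1$. Finally, isolating $-\Delta w_n$ and $-\lambda\Delta\psi_n$ in \eqref{sigma}-\eqref{sig} and checking that the right-hand sides converge in $L^2$ (split each nonlinear term and use $w_n\to w$ in every $L^p$, $p<\infty$, $\psi_n\to\psi$ in $H^1$, $0\le\psi_n\le\pi/4$), elliptic regularity upgrades the convergence to $H^2\times H^2$. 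The limit is a ground state over $S_a$ solving \eqref{sigma}-\eqref{sig} for $\sigma$, hence lies in $\mathcal{M}^\sigma$; therefore $\mathcal{M}^\sigma$ is compact in $H^2\times H^2$.

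\emph{Main difficulty.} Everything after the first step is either algebraic or a routine radial-compactness-plus-elliptic-bootstrap argument; the delicate point is the mass rigidity in the second step — exploiting the scaling law $E(cu,\theta)=c^2E^-(u,\theta)+E^+(\theta)$, the identity \eqref{lala}, and the a priori known value $S=J_a+\tfrac\sigma2 a$ to pin the rescaled configuration to a minimizer, and then ruling out $a_w\neq a$ through the coincidence $\Theta(w)=\Theta(\sqrt{a/a_w}\,w)$ together with the maximum principle for \eqref{eqcal2}.
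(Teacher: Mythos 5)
Your proof is correct and its core follows the same strategy as the paper: rescale the mass of $(w,\psi)$ back to $a$, use \eqref{lala} to see that the action of any element of $\mathcal{M}^\sigma$ reduces to $E^+(\psi)$, and compare with $J_a$ through the minimality and uniqueness of the angle from Lemma \ref{minan}. Where the paper simply asserts the strict inequality $S(\lambda w,\Theta(\lambda w))<S(\lambda w,\psi)$ for $\lambda\neq1$ and concludes by minimality of $(v,\phi)$, you instead obtain the exact identity $E(\tilde w,\psi)=J_a$, deduce $\psi=\Theta(\tilde w)$ from the squeeze, and exclude $\|w\|_{L^2}^2\neq a$ by subtracting the two angle equations and using $q>0$; this is precisely the justification of the strictness that the paper leaves implicit, so on this point your write-up is more complete. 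For compactness the paper quotes the convergence of radial minimizing sequences from the proof of Theorem \ref{min} (i.e.\ from \cite{Arg}) plus elliptic regularity, whereas you give a self-contained argument (radial compact embedding, $L^4$-continuity of $\Theta$ from Lemma \ref{minan}, recovery of norm convergence by testing \eqref{sigma} with $w_n$ and using $\sigma>0$, then $H^2$ bootstrap); both work, yours trading a citation for a few extra lines. The only fragile spot is your setup assertion that $\psi=\Theta(w)$ because $0\le\psi\le\pi/4$ is ``implicit'': that bound is not part of the definition \eqref{DefM}, and the assertion is really only needed to rule out $w\equiv0$ (after the squeeze, $\psi=\Theta(w)$ is rigorous anyway); since the paper's own proof silently ignores the same corner case, this is a shared minor gap rather than a defect of your approach.
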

\begin{proof}
    Let $(v,\phi) \in \mathcal{M}^\sigma $ be a ground state over $S_{\|v\|_{L^2}^2}$. If $(w, \psi ) \in \mathcal{M}^\sigma$ satisfies $\lambda^2 \|w\|_{L^2}^2 = \|v\|_{L^2}^2$ for $\lambda \neq 1$, then we have the contradiction by Lemma \ref{minan}
    \begin{equation*}
        S(\lambda w , \Theta( \lambda w) ) < S(\lambda  w, \psi) = S(w , \psi ) = S(v, \phi)
    \end{equation*}
    using in the first equality $(w, \psi)$ solution of \eqref{sigma}-\eqref{sig}.
\\Any sequence in $\mathcal{M}^\sigma$ is then a minimizing radial sequence for the energy in $S_a$. Strong convergence up to subsequences in $H^1$ to a minimal configuration was already proved in Theorem \ref{min} (see \cite{Arg}), and the limit satisfies equations \eqref{sigma}-\eqref{sig} with the same $\sigma$. Finally, convergence in $H^2$ is deduced by elliptic regularity argument for \eqref{sigma}-\eqref{sig}. 
\end{proof}
The main result of this section is the following
\begin{proposition}
     Let $a>a_0$, $a_0$ as in Theorem \ref{min}. If the map $\sigma(a)$ verifies $\bar{D}^+ \sigma(a) < \infty$, there exists $\tau >0 $ such that 
    \begin{equation}
    \label{''} 
    \left \langle  S^{''}|_{v,\phi}(\eta, \theta)^t, (\eta, \theta)^t\right \rangle \ge \tau\left  ( \| \eta \|_{H^1}^2+ \| \theta \|_{H^1}^2\right ) 
\end{equation}
for any $(v,\phi) \in \mathcal{M}^{\sigma(a)}$,  $\eta, \theta \in H^1 $ with norm sufficiently small and
\begin{align}
\label{cond6}
\|\eta + v\|_{L^2}=&\|v\|_{L^2} \\ \ \ 
0 = (\eta, iv)_{H^1} &= ((\eta,\theta)^t, (\partial_l v, \partial_l \phi)^t)_{H^1\times H^1}= ((\eta,\theta)^t, (k_m, h_m)^t)_{H^1\times H^1}
\label{cond3}
\end{align}
where $(k_m, h_m)$ is an orthonormal basis of $\mathrm{Ker}\mathcal{L}_1^{(v,\phi)} \cap \left ( H^1_{rad} \times H^1_{rad} \right )$.
\label{S''}
\end{proposition}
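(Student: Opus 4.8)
The plan is to prove the coercivity estimate \eqref{''} by a standard spectral decomposition of the quadratic form $\langle S''|_{v,\phi}(\eta,\theta)^t,(\eta,\theta)^t\rangle$, combined with a contradiction-compactness argument that exploits the compactness of $\mathcal{M}^{\sigma(a)}$ in $H^2\times H^2$ (proved in the preceding lemma). Since \eqref{calc''} splits the form into the piece $\langle\mathcal{L}_1(\eta_1,\theta)^t,(\eta_1,\theta)^t\rangle$ acting on the real part together with the angle, and the piece $\langle\mathcal{L}_2\eta_2,\eta_2\rangle$ acting on the imaginary part, I would treat the two pieces separately. For the imaginary part, $\mathcal{L}_2 = L_1^{(v,\phi)} = -\Delta + 2\sigma - 2\sin(2\phi)$, and since $(v,\phi)$ solves \eqref{sigma}, $v>0$ is the ground state of $L_1$ with eigenvalue $0$; hence $L_1\ge 0$ with one-dimensional kernel $\Span\{v\}$, and the constraint $(\eta_2, iv)_{H^1}=0$ (equivalently $(\eta_2,v)_{H^1}=0$, which after using the equation becomes $L^2$-orthogonality to $v$) pushes $\eta_2$ into the spectral subspace where $L_1\ge \mu_1>0$ for the second eigenvalue $\mu_1$. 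A uniform gap over $\mathcal{M}^{\sigma(a)}$ follows from the $H^2$-compactness and continuity of eigenvalues. The translation generators $\partial_l v$ also lie in $\Ker\mathcal{L}_1$ but, being sign-changing, correspond to higher eigenvalues, so removing them does not conflict.

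The real part is the genuinely hard piece. Here $\mathcal{L}_1^{(v,\phi)}$ is the $2\times 2$ matrix operator coupling $\eta_1$ and $\theta$; differentiating \eqref{sigma}-\eqref{sig} with respect to translations shows $(\partial_l v,\partial_l\phi)^t\in\Ker\mathcal{L}_1^{(v,\phi)}$, and \eqref{cond3} removes exactly these and any radial real-valued kernel elements $(k_m,h_m)$. The first step is to argue that on the orthogonal complement of $\Ker\mathcal{L}_1^{(v,\phi)}$, with the extra scalar constraint $\|\eta+v\|_{L^2}=\|v\|_{L^2}$ (which linearizes to $(\eta_1,v)_{L^2}=0$ up to second order), the form $\langle\mathcal{L}_1^{(v,\phi)}\cdot,\cdot\rangle$ is strictly positive. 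Nonnegativity is inherited from the fact that $(v,\phi)$ is a constrained minimizer over $S_a$, as noted in the introduction. To upgrade to strict positivity one must rule out that the form vanishes on a nonzero vector in the constrained subspace that is not in the kernel: this is precisely the point where the condition $\bar D^+\sigma(a)<\infty$ enters. Following the mechanism of \cite{W2}, I would consider the vector obtained by differentiating the ground state branch in $\sigma$ (or, since no smooth branch is available, a difference quotient $(v_{\sigma+h}-v_\sigma)/h$, $(\phi_{\sigma+h}-\phi_\sigma)/h$), observe that $\mathcal{L}_1$ applied to the $\sigma$-derivative equals $(-2v,0)^t$ (from $\partial_\sigma$ of \eqref{sigma}), and deduce that the sign of the minimal constrained eigenvalue is tied to the sign of $\frac{d}{d\sigma}\|v_\sigma\|_{L^2}^2$; the monotonicity from Lemma \ref{Lema} together with $\bar D^+\sigma(a)<\infty$ gives, at a.e. $a$, that this derivative is finite and positive, yielding strict positivity of the form on the constrained complement.

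Once strict positivity is established pointwise for each $(v,\phi)\in\mathcal{M}^{\sigma(a)}$, I would obtain the uniform constant $\tau$ by a compactness/contradiction argument: suppose sequences $(v_n,\phi_n)\in\mathcal{M}^{\sigma(a)}$ and $(\eta_n,\theta_n)$ of unit $H^1\times H^1$ norm, satisfying the constraints \eqref{cond6}-\eqref{cond3} relative to $(v_n,\phi_n)$, with $\langle S''|_{v_n,\phi_n}(\eta_n,\theta_n)^t,(\eta_n,\theta_n)^t\rangle\to 0$. By $H^2$-compactness of $\mathcal{M}^{\sigma(a)}$ pass to a limit $(v,\phi)$; the potentials $\sin(2\phi_n)$, $\cos(2\phi_n)$, $|v_n|^2$ converge strongly enough (using $H^2\hookrightarrow L^\infty$ in 2d via the exponential decay of Proposition \ref{decay}, or a suitable cutoff) that the compact part of the forms converges, while the principal parts are weakly lower semicontinuous; one then extracts a weak limit $(\eta,\theta)$, shows it still satisfies the (closed) constraints, and concludes either $(\eta,\theta)=0$ — contradicting that the $H^1$ mass cannot escape because the quadratic form controls $\|\nabla\eta\|_{L^2}^2+\|\nabla\theta\|_{L^2}^2$ plus lower-order terms with signs handled as in the Lemma \ref{leman1} estimates — or $(\eta,\theta)\ne 0$ lies in the constrained complement with vanishing form, contradicting the strict positivity just proved. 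The main obstacle is the second paragraph: extracting the correct sign information from $\bar D^+\sigma(a)<\infty$ without a differentiable ground-state branch and without Pohozaev-type scaling identities, and making the difference-quotient argument rigorous given that $\mathcal{M}^\sigma$ is only an implicitly defined set; handling the coupling in the $2\times 2$ operator $\mathcal{L}_1^{(v,\phi)}$ (in particular that its kernel may be larger than the translation generators) is what forces the inclusion of the $(k_m,h_m)$ in \eqref{cond3} and is the reason the statement is phrased with that orthonormal basis.
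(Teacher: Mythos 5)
Your overall architecture (splitting the form via \eqref{calc''}, coercivity of $\mathcal{L}_2$ from positivity of the ground state, nonnegativity of $\mathcal{L}_1$ from constrained minimality as in Lemma \ref{para}, characterization of $\mathrm{Ker}\,\mathcal{L}_1$ to justify \eqref{cond3}, and a compactness/contradiction argument over $\mathcal{M}^{\sigma(a)}$ for a uniform $\tau$) matches the paper's scheme. But the step you defer — turning $\bar D^+\sigma(a)<\infty$ into strict positivity on the constrained complement — is the heart of the proposition, and the mechanism you propose for it does not work here. You suggest following \cite{W2} by differentiating the ground-state branch in $\sigma$ (or taking difference quotients $(v_{\sigma+h}-v_\sigma)/h$) so that $\mathcal{L}_1$ applied to the derivative gives $(-2v,0)^t$, and then reading the sign of the constrained bottom eigenvalue from $\frac{d}{d\sigma}\|v_\sigma\|_{L^2}^2$. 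In this problem there is no uniqueness of ground states, no known continuous (let alone $C^1$) selection $\sigma\mapsto (v_\sigma,\phi_\sigma)$, and Lemma \ref{Lema} only gives monotonicity of $a\mapsto\sigma(a)$, not differentiability of any map $\sigma\mapsto\|v_\sigma\|_{L^2}^2$; indeed the whole framework ($\mathcal{M}^\sigma$, the one-sided Dini condition on $\sigma(a)$) exists precisely because \eqref{dp} cannot be computed. You flag this yourself as the main obstacle, but flagging it leaves the proof without its central ingredient: as written, the passage from $\bar D^+\sigma(a)<\infty$ to nonexistence of a degenerate direction is asserted, not proved.

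The paper closes this gap without ever differentiating a branch. First (Lemma \ref{ult}), a minimizing-sequence and Lagrange-multiplier argument reduces failure of coercivity to the existence of $(v,\phi)\in\mathcal{M}^{\sigma(a)}$ and $(\eta,\theta)$ with $\mathcal{L}_1^{(v,\phi)}(\eta,\theta)^t=(v,0)^t$, vanishing quadratic form, and $(\eta,v)_{L^2}=0$, i.e. \eqref{app4}. Then this configuration is excluded by a purely variational third-order expansion of $S$: one shows $\langle S'''|_{v,\phi}(\eta,\theta)^t,(\eta,\theta)^t,(\eta,\theta)^t\rangle\neq 0$ (otherwise a mass-preserving correction of $v+\varepsilon\eta$ beats the minimizer at order $\varepsilon^3$), and then the competitor $v_\varepsilon=v+\frac{\varepsilon}{\|\eta\|_{L^2}}\eta$, which lives on $S_{a+\varepsilon^2}$, forces $J_{a+\varepsilon^2}\le J_a-\sigma(a)\frac{\varepsilon^2}{2}-c\varepsilon^3+o(\varepsilon^3)$; this contradicts the lower bound $J_{a+\varepsilon^2}\ge J_a-\frac{\varepsilon^2}{2}\sigma(a+\varepsilon^2)\ge J_a-\frac{\varepsilon^2}{2}\sigma(a)+O(\varepsilon^4)$, obtained from the scaling comparison \eqref{endif} together with the hypothesis $\sigma(a+\varepsilon^2)\le\sigma(a)+C\varepsilon^2+o(\varepsilon^2)$. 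This is where the Dini-derivative hypothesis is actually used, and it is the piece your proposal would need to supply (or replace by an argument of comparable strength) before the rest of your outline — which otherwise parallels Lemmas \ref{prim}, \ref{para}, \ref{LemKer} and the compactness step — can be assembled into a proof.
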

   \begin{remark}The first orthogonality in \eqref{cond3} are due to invariance by translations and multiplication by a complex exponential of the action. As was noted in \cite{Fuk}, \cite{StS}, the invariance implies that the inequality \eqref{''} cannot hold along those directions. Differentiating with respect to $\alpha$ the identity $S'(e^ {i\alpha} v, \phi)\equiv 0$ we infer $\left. S^{''}\right.|_{v,\phi} (iv,0) \equiv 0$, and the same applies for translation invariance.
   \\The last orthogonality is related to modulation in the manifold $\mathcal{M}^\sigma$. This maintains an implicit dependence on $(v,\phi)$ as the basis $(k_m, h_m)$, and even its cardinality, are determined by the specific ground state. To simplify the notation, we omit the indexing specifications of $m$; we will always be considering the number corresponding to the cardinality of the basis. Notice $\mathcal{L}_1^{(v,\phi)}$ is a compact perturbation of a self adjoint operator with positive essential spectrum, so by Weyl's Theorem the Kernel has finite dimension for any $(v,\phi)\in \mathcal{M}^\sigma$. 
    \label{0der}
     \end{remark}
     \begin{remark}
         If the map $\sigma(a)$  had a $C^1$ inverse, the hypothesis on $\bar D ^+ \sigma (a)$ would be equivalent to \eqref{dp}. We have not such regularity on $\sigma$: in particular proving the bijection between frequencies and charge seems quite challenging. A difficulty of our proof is to rely on this weaker notion of derivative.
     \end{remark}
Assuming Proposition \ref{S''}, we prove Theorem \ref{teost}. Without risk of confusion, we occasionally drop the subscript of the norm $\| \cdot \|_{H^1 \times H^1}$.
\begin{proposition}
  Let $\mathcal{M}^\sigma \subset S_a$ satisfy the hypothesis of Proposition \ref{S''}. There exist constants $D>0, \delta>0$ such that for any $u, \varphi \in H^1$ with 
\begin{equation}\|u\|_{L^2}^2=a; \ \ \ \ \inf_{\substack  {0 \le \beta\le 2\pi, \ y \in \R^2  \\ (w, \psi) \in \mathcal{M}^\sigma }} \left \|(u, \varphi) -(e^{i\beta}w_y, \psi_y)\right \|_{H^1\times H^1}= \left \|(u, \varphi) -(e^{i\alpha}v_x, \phi_x)\right \|\le \delta 
\label{conf}
\end{equation}
it holds
\begin{equation}
    E(u,\varphi)  - E(v, \phi) \ge D \inf_{\substack  {\beta, y  \\ (w, \psi) \in \mathcal{M}^\sigma }} \left \|(u, \varphi) -(e^{i\beta}w_y, \psi_y)\right \|_{H^1\times H^1}^2
\end{equation}
\label{proco}
\end{proposition}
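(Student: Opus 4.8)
The plan is to deduce the estimate from the coercivity bound of Proposition~\ref{S''}, via a modulation argument combined with a second order Taylor expansion of the action. Every $(v,\phi)\in\mathcal{M}^\sigma$ lies in $S_a$, and since $S=E+\sigma Q$ with $Q\equiv a/2$ on $S_a$, for any $(u,\varphi)$ with $\|u\|_{L^2}^2=a$ one has $E(u,\varphi)-E(v,\phi)=S(u,\varphi)-S(v,\phi)$. Both $E$ and $S$ are invariant under $(u,\varphi)\mapsto(e^{i\beta}u_y,\varphi_y)$, so, writing the infimum in \eqref{conf} as attained at $(e^{i\alpha}v_x,\phi_x)$, I would replace $(u,\varphi)$ by the phase shift and translation that bring this optimal element to $(v,\phi)$ itself. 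It then suffices to prove $S(u,\varphi)-S(v,\phi)\ge D\,\|(\eta,\theta)\|_{H^1\times H^1}^2$ with $(\eta,\theta):=(u,\varphi)-(v,\phi)$, $\|(\eta,\theta)\|\le\delta$, since after this normalization $\|(\eta,\theta)\|^2$ equals the infimum appearing in the statement.

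\emph{Modulation.} I would first check that for $\delta$ small the infimum in \eqref{conf} is attained: the minimization over $\beta\in[0,2\pi]$ and over $(w,\psi)\in\mathcal{M}^\sigma$ runs over compact sets (the $H^2\times H^2$–compactness of $\mathcal{M}^\sigma$ was recorded just before Proposition~\ref{S''}), while the translation parameter may be confined to a bounded region because the elements of $\mathcal{M}^\sigma$ are uniformly localized by Proposition~\ref{decay}. The first order optimality conditions at the minimizer then give exactly the orthogonality relations \eqref{cond3}: stationarity in $\beta$ yields $(\eta,iv)_{H^1}=0$, stationarity in $y$ yields $((\eta,\theta)^t,(\partial_l v,\partial_l\phi)^t)_{H^1\times H^1}=0$, and minimality over $(w,\psi)\in\mathcal{M}^\sigma$ yields orthogonality of $(\eta,\theta)$ to the radial kernel of $\mathcal{L}_1^{(v,\phi)}$. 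Condition \eqref{cond6} is automatic, since the modulation preserves $\|u\|_{L^2}^2=a=\|v\|_{L^2}^2$.

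\emph{Expansion and conclusion.} Since $S'(v,\phi)=0$,
\[
S(u,\varphi)-S(v,\phi)=\tfrac12\big\langle S''|_{v,\phi}(\eta,\theta)^t,(\eta,\theta)^t\big\rangle+R,
\]
where the quadratic parts of $S$ (the two Dirichlet energies and $\sigma Q$) contribute no remainder, and the trigonometric terms, estimated through $H^1(\R^2)\hookrightarrow L^p$ together with the uniform $L^\infty$ bound on $(v,\phi)\in\mathcal{M}^\sigma$ (finite since $\mathcal{M}^\sigma$ is $H^2$–compact and $H^2(\R^2)\hookrightarrow L^\infty$), give $|R|\le C\,\|(\eta,\theta)\|_{H^1\times H^1}^3$ with $C$ uniform over $\mathcal{M}^\sigma$. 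Proposition~\ref{S''}, whose hypothesis $\bar D^+\sigma(a)<\infty$ is inherited, provides $\langle S''|_{v,\phi}(\eta,\theta)^t,(\eta,\theta)^t\rangle\ge\tau\,\|(\eta,\theta)\|^2$ with $\tau>0$ uniform over $\mathcal{M}^{\sigma}$. Choosing $\delta$ so small that $C\delta\le\tau/4$ and that $(\eta,\theta)$ falls within the smallness range of Proposition~\ref{S''}, one gets
\[
E(u,\varphi)-E(v,\phi)=S(u,\varphi)-S(v,\phi)\ge\Big(\tfrac{\tau}{2}-\tfrac{\tau}{4}\Big)\|(\eta,\theta)\|^2=\tfrac{\tau}{4}\inf_{\beta,y,(w,\psi)}\big\|(u,\varphi)-(e^{i\beta}w_y,\psi_y)\big\|^2,
\]
which is the claim with $D=\tau/4$.

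\emph{Main obstacle.} The delicate point is the orthogonality of $(\eta,\theta)$ to the radial kernel $\mathrm{Ker}\,\mathcal{L}_1^{(v,\phi)}$ in the modulation step. Since $\mathcal{M}^\sigma$ is defined only implicitly and is not known to reduce to a point — indeed its purpose is precisely to absorb a possible radial kernel — one cannot differentiate the distance functional along $\mathcal{M}^\sigma$ without first knowing that near $(v,\phi)$ the set $\mathcal{M}^\sigma$ carries a $C^1$ family tangent to $\mathrm{Ker}\,\mathcal{L}_1^{(v,\phi)}\cap\big(H^1_{rad}\times H^1_{rad}\big)$. I would establish this by a Lyapunov--Schmidt reduction of the system \eqref{sigma}-\eqref{sig} under the constraint $\|v\|_{L^2}^2=a$, using that the relevant solutions are energy minimizers to force the reduced finite dimensional equation to be degenerate along the whole kernel, so that the kernel component of the error can genuinely be annihilated by an appropriate choice of $(v,\phi)\in\mathcal{M}^\sigma$. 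Everything else — attainment of the infimum, the two easy orthogonalities, and the Taylor bound — is routine, and the uniformity of $\tau$ and $C$ over the compact set $\mathcal{M}^\sigma$ is what renders $D$ and $\delta$ independent of the particular ground state.
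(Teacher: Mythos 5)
Your proposal is correct and follows essentially the same route as the paper: modulate by the attained infimum over phase, translation and $\mathcal{M}^\sigma$ to obtain \eqref{cond6}--\eqref{cond3}, Taylor expand $S$ at the critical point $(v,\phi)$, and invoke the uniform coercivity of Proposition~\ref{S''}, using $\|u\|_{L^2}^2=a$ to convert the $S$-inequality into the $E$-inequality. The delicate point you flag — that minimality over $\mathcal{M}^\sigma$ yields orthogonality to the radial kernel only once $\mathcal{M}^\sigma$ is known to be locally a smooth manifold tangent to $\mathrm{Ker}\,\mathcal{L}_1^{(v,\phi)}$ — is resolved in the paper exactly as you suggest, by the implicit function theorem argument in the $k=0$ case of Lemma~\ref{LemKer} and Corollary~\ref{corM}.
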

\begin{remark}
The infimum problem that defines the modulation parameters in the hypothesis \eqref{conf} is attained. $\|u-e^{i\beta}w_y\|_{H^1}$ is a continuous function of $(w,\psi), \beta, y\in \mathcal{M}^\sigma \times[0,2\pi]\times \R^2$; the first two sets are compact, and it is straightforward to verify that for a minimizing sequence $y_n$ is bounded.
\end{remark}
\begin{proof}
    We follow closely the proof in \cite{Fuk}. Consider $\alpha \in \R$, $x\in \R^2, (v, \phi) \in \mathcal{M}^\sigma$ such that
\begin{equation}
\inf_{\substack  {\beta, y  \\ (w, \psi) \in \mathcal{M}^\sigma }} \left \|(u, \varphi) -(e^{i\beta}w_y, \psi_y)\right \|_{H^1\times H^1}= \left \|(u, \varphi) -(e^{i\alpha}v_x, \phi_x)\right \|_{H^1\times H^1}\le \delta
\end{equation}
with $\delta$ to be fixed. $\eta:=e^{-i\alpha} u_{-x} - v$ and $\theta:= \varphi_{-x} -\phi$ satisfy the hypothesis of Proposition \ref{S''}: \eqref{cond6} by hypothesis \eqref{conf}, and \eqref{cond3} by $x, \alpha, (w,\psi)$ minimizing the distance over the symmetry group and the variety $\mathcal{M}^\sigma$. A Taylor expansion and Proposition \ref{S''} lead to
\begin{align}
    \notag S(u,\varphi)& = S(e^{-i\alpha} u_{-x}, \phi + \theta) \\& \notag = S(v, \phi) +  S'(v,\phi ) (\eta, \theta) + \frac{1}{2} \left \langle \left. S^{''}\right.|_{v,\phi}(\eta, \theta)^t, (\eta, \theta)^t\right \rangle + o\left (\|\eta\|_{ H^1}^2 + \|\theta \|_{ H^1}^2\right )
    \\ & \ge  S(v, \phi) + \tau \left (\|\eta\|_{H^1}^2 +\|\theta\|_{H^1}^2\right )  +  o\left (\|\eta\|_{ H^1}^2 + \|\theta \|_{ H^1}^2\right )
    \label{30}
\end{align}
The first order term vanishes for $(v,\phi)$ solution of the system \eqref{sigma}-\eqref{sig}. 
  The $L^2$ norms of $v$ and $u$ are equal, so the conclusion follows directly from the previous inequality for $\delta$ sufficiently small.
\end{proof}
We conclude the prove of Theorem \ref{teost} mimicking Theorem 3.5 in \cite{StS}. 

\begin{proof}[Proof of Theorem \ref{teost}] By Lemmas \ref{Lemmon}-\ref{Lema}, for a.e. $a>a_0$ $\sigma(a)$ satisfies the derivative hypothesis of Proposition \ref{''}.\\
    Assume the thesis were false for $a$ as above, and $\sigma= \sigma(a)$ fixed. We have, for $\delta$ given by Proposition \ref{proco}, a value $0<\varepsilon<\delta$, sequence of initial datum $u_{n}^0$ and a sequence of times $t_n$ and $(v,\phi) \in \mathcal{M}^\sigma $ such that
    \begin{equation}
        \inf_{\substack  {\beta, y  \\ (w, \psi) \in \mathcal{M}^\sigma }} \left \|(u_n, \theta_n)(t_n) -(e^{i\beta}w_y, \psi_y)\right \|_{H^1\times H^1}\ge \varepsilon; \ \ \ \ u_n^0 \to v
    \end{equation}
    For Theorem \ref{teoca}, Schrödinger-Poisson flow is continuous in time; hence we can pick sequences $t_n\in \R, \alpha_n \in [0,2\pi], x_n \in \R^2,$ $(v_n, \phi_n) \in \mathcal{M}^\sigma$ such that
 \begin{equation}\label{ora}
    \inf_{\substack  {\beta, y  \\ (w, \psi) \in \mathcal{M}^\sigma }} \left \|(u_n, \theta_n)(t_n) -(e^{i\beta}w_y, \psi_y)\right \|
    = \left \|(u_n, \theta_n)(t_n) -(e^{i\alpha_n}(v_n)_{x_n}, (\phi_n)_{x_n})\right \|
    =  \varepsilon
    \end{equation}
    Set $u_n:= e^{-i\alpha_n}(u_n)_{-x_n}(t_n)$. By Lemma \ref{minan} $\Theta\left (u_0^n\right ) \to \phi$ strongly in $H^1$; conservation of energy and charge implies
   \begin{equation*}
E(v,\phi)=  \lim_n E\left ( u_n^0, \Theta\left (u_n^0\right )\right )  = \lim_n E\left ( u_n, \Theta(u_n ) \right ); \ \ \|v\|_{L^2}= \lim_n \|u_n^0\|_{L^2} = \lim_n \|u_n\|_{L^2}  
   \end{equation*}
   We define $w_n:= u_n\frac{ \|v_n\|_{L^2}}{\|u_n\|_{L^2}}$, which has the same $L^2$ norm of the ground state in $\mathcal{M}^\sigma$. Since $\|w_n- u_n\|_{H^1} \to 0$ for $n\to \infty$, for $n$ sufficiently large we have
   \begin{equation}
   \label{lok}
        \frac{1}{2} \left \|u_n-v_n\right \|_{H^1}
        \le \left \|w_n -v_n\right \|_{H^1}
        \le 2 \left \|u_n-v_n,\right \|
        _{H^1}
   \end{equation}
By \eqref{ora} the couple $(w_n, \Theta(u_n))$ satisfies the hypothesis \eqref{conf} of Proposition \ref{proco}. We have reached the contradiction as we would have
    \begin{align*}
        0= \lim_n E(w_n, \Theta (u_n)) - E(v,\phi) \ge D  \limsup_n \inf_{\substack {\beta,y\\(v,\phi) \in \mathcal{M^\sigma}} }\|(w_n, \Theta( u_n) )  - (e^{i\alpha}v_y, \phi_y) \|
        \ge D \varepsilon 
    \end{align*}
\end{proof}
The remaining part of this section is dedicated to the proof of Proposition \ref{S''}, which we split in several Lemmas. 
\\Our argument is based on a study of the spectrum of the operators $\mathcal{L}_1, \mathcal{L}_2$. An orthogonality condition fits better such techniques, rather than the norm equality \eqref{cond3}. For this we have the following
\begin{lemma}
\label{prim}
    Suppose there exists $\tau'>0$ such that the inequality \eqref{''} holds for any $(z, \psi) \in  H^1\times H^1$ satisfying \eqref{cond3} and
    \begin{equation}
        \label{cond4}(z, v)_{L^2}= 
0
    \end{equation}
    Then exist $\tau>0$ such that \eqref{''}  holds for any $(\eta, \theta) \in  H^1\times H^1$ satisfying \eqref{cond6}-\eqref{cond3}, with norm sufficiently small. 
\end{lemma}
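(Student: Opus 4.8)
The goal is to replace the nonlinear constraint $\|\eta+v\|_{L^2}=\|v\|_{L^2}$ in \eqref{cond6} by the linearized orthogonality condition $(z,v)_{L^2}=0$ in \eqref{cond4}, at the cost of a slightly smaller coercivity constant. Geometrically, the constraint $\|\eta+v\|_{L^2}=\|v\|_{L^2}$ says that $v+\eta$ lies on the $L^2$-sphere of radius $\|v\|_{L^2}$, whose tangent space at $v$ is exactly $\{(z,v)_{L^2}=0\}$; so $\eta$ is a small perturbation of an element of that tangent space, with an error that is quadratic in $\|\eta\|_{L^2}$.

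\emph{First step: decompose $\eta$.} Expanding $\|\eta+v\|_{L^2}^2=\|v\|_{L^2}^2$ gives $2(\eta,v)_{L^2}+\|\eta\|_{L^2}^2=0$, so $(\eta,v)_{L^2}=-\tfrac12\|\eta\|_{L^2}^2$. Write $\eta = z + s\,v$ where $s:=(\eta,v)_{L^2}/\|v\|_{L^2}^2 = -\tfrac12\|\eta\|_{L^2}^2/\|v\|_{L^2}^2$, so that $(z,v)_{L^2}=0$ and $|s|\le C\|\eta\|_{L^2}^2\le C\|\eta\|_{H^1}^2$. I must check that the pair $(z,\theta)$ still satisfies the linear orthogonalities \eqref{cond3}: since $v$ is real, radial and solves \eqref{sigma}, one checks $(v,iv)_{H^1}=0$, $((v,0)^t,(\partial_l v,\partial_l\phi)^t)_{H^1\times H^1}=0$ (odd times even), and $((v,0)^t,(k_m,h_m)^t)_{H^1\times H^1}=0$ — the last because $(v,0)^t\in\mathrm{Ker}\,\mathcal L_1^{(v,\phi)}$ would contradict... actually this needs care: $(v,0)^t$ is generally \emph{not} in the kernel, so I instead note that $v=\partial_\sigma$-type directions are handled by the fact that the $H^1$-orthogonality to the kernel elements $(k_m,h_m)$ is a \emph{linear} condition, so subtracting a multiple of $v$ only changes the inner products $((\eta,\theta)^t,(k_m,h_m)^t)_{H^1\times H^1}$ by $s(v,k_m)_{H^1}$; to keep \eqref{cond3} exactly I should instead project $\eta$ onto the orthogonal complement of all the listed directions \emph{plus} $v$ simultaneously — i.e. set $z := \eta - (\text{components along } iv,\ \partial_l v,\ k_m,\ \text{and } v)$, absorb all the extra pieces into a single vector $w$ with $\|w\|_{H^1}\le C\|\eta\|_{H^1}^2$ using that the $L^2$- and $H^1$-components of $\eta$ along each of those finitely many fixed directions are $O(\|\eta\|_{H^1}^2)$ whenever \eqref{cond6}-\eqref{cond3} hold. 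Then $(z,v)_{L^2}=0$ and $(z,\theta)$ satisfies \eqref{cond3} and \eqref{cond4}, with $\eta = z + w$, $\|w\|_{H^1}=O(\|\eta\|_{H^1}^2)$.

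\emph{Second step: transfer the bound.} Apply the hypothesis to $(z,\theta)$: $\langle S''|_{v,\phi}(z,\theta)^t,(z,\theta)^t\rangle \ge \tau'(\|z\|_{H^1}^2+\|\theta\|_{H^1}^2)$. Then expand $\langle S''|_{v,\phi}(\eta,\theta)^t,(\eta,\theta)^t\rangle$ by bilinearity in terms of $(z,\theta)$ and $(w,0)$: the cross term is bounded by $C\|S''\|\,\|z\|_{H^1}\|w\|_{H^1}\le C\|z\|_{H^1}\cdot C\|\eta\|_{H^1}^2$ and the pure $w$ term by $C\|w\|_{H^1}^2\le C\|\eta\|_{H^1}^4$; since $\|z\|_{H^1}\le \|\eta\|_{H^1}+\|w\|_{H^1}\le 2\|\eta\|_{H^1}$ for $\|\eta\|_{H^1}$ small, these are all $o(\|\eta\|_{H^1}^2)$. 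Finally $\|z\|_{H^1}^2 \ge (\|\eta\|_{H^1}-\|w\|_{H^1})^2 = \|\eta\|_{H^1}^2 - o(\|\eta\|_{H^1}^2)$, so combining, $\langle S''|_{v,\phi}(\eta,\theta)^t,(\eta,\theta)^t\rangle \ge \tau'(\|\eta\|_{H^1}^2+\|\theta\|_{H^1}^2) - o(\|\eta\|_{H^1}^2+\|\theta\|_{H^1}^2) \ge \tau(\|\eta\|_{H^1}^2+\|\theta\|_{H^1}^2)$ with, say, $\tau=\tau'/2$, once the norms are small enough. A technical point to verify: that $S''|_{v,\phi}$, viewed as a bounded bilinear form on $H^1\times H^1$, indeed has $\|S''\|<\infty$ uniformly for $(v,\phi)$ ranging over the compact set $\mathcal M^\sigma$ — this follows from the explicit formula \eqref{calc''}-\eqref{defop} together with the $H^2\times H^2$-compactness of $\mathcal M^\sigma$ and Sobolev embedding, which also makes the constant $C$ above uniform in $(v,\phi)$.

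\emph{Main obstacle.} The delicate point is the bookkeeping in the first step: verifying that when $\eta$ satisfies all of \eqref{cond6}-\eqref{cond3}, its components along the finitely many fixed "bad" directions $v, iv, \partial_l v, k_m, h_m$ are genuinely quadratically small, so that they can be lumped into a single remainder $w$ of size $O(\|\eta\|_{H^1}^2)$ without destroying either $(z,v)_{L^2}=0$ or \eqref{cond3}. The $L^2$-component along $v$ is exactly $-\tfrac12\|\eta\|_{L^2}^2$ by the norm constraint; the $H^1$-components along $iv$, $\partial_l v$ and $(k_m,h_m)$ are already zero by \eqref{cond3} — but subtracting the $L^2$-multiple of $v$ reintroduces a nonzero $H^1$-component along those, of size $|s|\,|(v,\,\cdot\,)_{H^1}| = O(\|\eta\|_{H^1}^2)$, which is fine. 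So one really does a single Gram–Schmidt correction over a fixed finite-dimensional space whose dimension is bounded uniformly on $\mathcal M^\sigma$, and all corrections are quadratic. Everything else is routine Taylor expansion.
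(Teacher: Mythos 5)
Your second step — uniform boundedness of $S''|_{v,\phi}$ as a bilinear form on $H^1\times H^1$ (using the $H^2\times H^2$ compactness of $\mathcal{M}^\sigma$), bilinear expansion, and absorption of $o(\|\eta\|_{H^1}^2+\|\theta\|_{H^1}^2)$ errors — is correct and is essentially the paper's computation. The gap is in the first step, namely in producing a pair that satisfies \eqref{cond3} and \eqref{cond4} \emph{exactly}. Your blanket claim that ``the $L^2$- and $H^1$-components of $\eta$ along each of those finitely many fixed directions are $O(\|\eta\|_{H^1}^2)$'' is false for the translation and kernel directions: \eqref{cond3} are conditions on the \emph{pair} $(\eta,\theta)$, so all you know is $(\eta,k_m)_{H^1}=-(\theta,h_m)_{H^1}$ and $(\eta,\partial_l v)_{H^1}=-(\theta,\partial_l\phi)_{H^1}$, which are only first-order small. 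Your corrected version — subtract $sv$ with $s=O(\|\eta\|_{L^2}^2)$ and note that $(\eta-sv,\theta)$ violates the $(k_m,h_m)$-orthogonality only by $s(v,k_m)_{H^1}=O(\|\eta\|_{H^1}^2)$ — is still not enough: the hypothesis requires exact orthogonality, and restoring it while keeping $\theta$ fixed means solving a linear system over $\mathrm{span}\{v,k_1,\dots,k_M\}$ whose matrix mixes the $L^2$ pairing against $v$ with $H^1$ pairings against the $k_m$. Since the $(k_m,h_m)$ are orthonormal only in $H^1\times H^1$ (so $(k_m,k_j)_{H^1}\neq\delta_{mj}$ in general), this is not a Gram matrix of a single inner product, positive-definiteness arguments do not apply, and its invertibility is neither obvious nor addressed; the same issue reappears if you instead project the pair onto the orthogonal complement of the kernel and then try to re-impose $(z,v)_{L^2}=0$ (solvability there needs, e.g., $\|v\|_{L^2}^2\neq\sum_m(k_m,v)_{L^2}(v,k_m)_{H^1}$). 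So, as written, the admissible decomposition you invoke is not guaranteed to exist.

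The paper avoids this by correcting the \emph{second} component as well: it writes $(\eta,\theta)=t(v,\varphi)+(z,\psi)$ with $(z,v)_{L^2}=0$, where $\varphi$ is a fixed radial real function (a suitable combination of the $h_m$) chosen so that $((v,\varphi)^t,(k_m,h_m)^t)_{H^1\times H^1}=0$ for every $m$. Since $(v,\varphi)$ is radial and real, its pairings with $iv$ and with $(\partial_l v,\partial_l\phi)$ vanish automatically, so subtracting $t(v,\varphi)$ preserves \emph{all} of \eqref{cond3} exactly, and only the scalar $t$ remains, fixed by the $L^2$ condition and of size $O(\|\eta\|_{L^2}^2)$ by the charge constraint \eqref{cond6}; after that the Taylor/bilinearity argument proceeds exactly as in your second step. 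If you insist on leaving $\theta$ untouched, you must additionally prove the invertibility, uniformly over $\mathcal{M}^\sigma$, of the mixed system above; that is the missing piece of your proposal.
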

\begin{proof}
Consider $(\eta, \theta)$ satisfying \eqref{cond6}-\eqref{cond3}, and decompose the perturbation as
\begin{align*}
  \begin{pmatrix}
      \eta \\ \theta
  \end{pmatrix}  = t\begin{pmatrix}
      v \\ \varphi
  \end{pmatrix}  + \begin{pmatrix}
      z \\ \psi
  \end{pmatrix} ; \ \ \ t\in \mathbb{C}; \ \ \ z, \varphi,  \psi \in H^1(\R^2); \ \ \ (z,v)_{L^2}=0
\end{align*}
Here $\varphi$ is a radial, real valued function. Since
$(v, \partial_l v)_{H^1}= (v, iv)_{H^1}=0$; $(z, \psi)$ satisfies the first two orthogonality condition in \eqref{cond3}. We can fix $\varphi$ such that  
\begin{equation*}
    ((v,\varphi)^t, (k_m, h_m)^t)_{H^1\times H^1} = 0
\end{equation*}It follows that $(z, \psi)$ satisfies conditions \eqref{cond3}. \\
The Taylor expansion of the charge gives 
\begin{equation*}
    Q(v+ \eta)= Q(v) + Q'(v)(\eta) + O (\|\eta \|_{L^2}^2)= Q(v) + t Q'(v)(v) + O (\|\eta \|_{L^2}^2)
\end{equation*}
Since $Q(v+\eta)= Q(v)$ by hypothesis, and $Q'(v)(v)$ is a fixed positive quantity, independent on $\eta$, we get $t=O(\|\eta \|_{L^2}^2)$. \\
Hence, both in $L^2$ and in $H^1$ we have
\begin{align*}
&\|z\|_{L^2}^2= \|\eta\|_{L^2}^2 +t^2 \|v\|_{L^2}^2 -2 t (\eta, v)_{L^2} = \|\eta\|_{L^2}^2 + o(\|\eta\|_{L^2}^2)
\\&\|z\|_{H^1}^2= \|\eta\|_{H^1}^2 +t^2 \|v\|_{H^1}^2 -2 t (\eta, v)_{H^1} = \|\eta\|_{H^1}^2 + o(\|\eta\|_{H^1}^2)
\end{align*}
The first line follows from in the relation above, while in the second  we have used $o(\|\eta\|_{L^2}^2) / \|\eta\|_{H^1}^2 \to 0 $ as $\eta \to 0 $ in $H^1$. 
%
It follows then
\begin{align*}
    &\left  \langle \left. S^{''}\right.|_{v,\phi}(\eta, \theta)^t, (\eta, \theta)^t\right  \rangle= \left \langle \left. S^{''}\right.|_{v,\phi}(z, \psi)^t, (z, \psi)^t\right \rangle + o\left (\|\eta\|_{H^1}^2+\|\theta\|_{H^1}^2\right )\\ &\ge \tau' \left (\|z\|_{H^1}^2 +\|\psi\|_{H^1}^2\right )  +  o\left (\|\eta\|_{H^1}^2+\|\theta\|_{H^1}^2\right ) \ge \tau \left (\|\eta\|_{H^1}^2 +\|\theta\|_{H^1}^2\right ) + o\left (\|\eta\|_{H^1}^2 +\|\theta\|_{H^1}^2\right) 
\end{align*}
\end{proof}
We study the operators $\mathcal{L}_1, \mathcal{L}_2$ separately. For $\mathcal{L}_2$, the proof is a direct application of a standard result in spectral analysis. For convenience of the reader we report an adaptation of the proof from \cite{Arghir}.
\begin{lemma}
\label{Spec}
    Let $\phi \in H^1$, $L_\phi(u) := (-\Delta+ 2\sigma - 2\sin (2\phi)   )u$ . If there exists $v \in H^2$ satisfying $v>0$; $L_\phi v=0$, then
    \begin{equation}
    \left \langle L_\phi u, u \right \rangle \ge 0 \ \ \ \ \forall \ u\in H^1(\R^2); \ \ \ \ \ \\ \label{maggio1} \left \langle L_\phi u, u \right \rangle > 0 \ \ \ \ \forall \ u\in H^1(\R^2)\setminus  \R v
    \end{equation}
\end{lemma}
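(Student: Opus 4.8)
The statement is the classical fact that if a Schrödinger operator $L_\phi = -\Delta + V$ (here $V = 2\sigma - 2\sin 2\phi$) annihilates a strictly positive function $v$, then $0$ is the bottom of the spectrum and is simple, so $L_\phi \ge 0$ with equality only on the line $\mathbb{R}v$. The standard route is a ground-state substitution (the Perron--Frobenius / Allegretto--Piepenbrink trick). I would argue as follows.

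\emph{Step 1: the pointwise algebraic identity.} For $u \in C_c^\infty(\mathbb{R}^2)$ write $u = v w$ with $w = u/v$, which makes sense since $v > 0$ and $v \in H^2 \hookrightarrow C^0$. A direct computation gives $|\nabla u|^2 + V u^2 = v^2|\nabla w|^2 + \nabla(v w^2 \cdot \nabla v) - w^2 v(-\Delta v + V v) = v^2|\nabla w|^2 + \mathrm{div}(v w^2 \nabla v)$, using $L_\phi v = 0$. Integrating, the divergence term drops (compact support, or suitable decay), yielding
\[
\langle L_\phi u, u\rangle = \int_{\mathbb{R}^2} v^2 |\nabla w|^2 \, dx = \int_{\mathbb{R}^2} v^2 \bigl|\nabla(u/v)\bigr|^2\,dx \ge 0.
\]
\emph{Step 2: density.} Extend from $C_c^\infty$ to all of $H^1(\mathbb{R}^2)$. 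Since $V \in L^\infty$ (as $\phi$ gives $\sin 2\phi$ bounded, and $\sigma$ is a constant), the quadratic form $\langle L_\phi u, u\rangle$ is continuous on $H^1$, and $C_c^\infty$ is dense in $H^1(\mathbb{R}^2)$, so $\langle L_\phi u, u\rangle \ge 0$ for all $u \in H^1$. This proves the first inequality in \eqref{maggio1}.

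\emph{Step 3: strictness off $\mathbb{R}v$.} Suppose $\langle L_\phi u, u\rangle = 0$ for some $u \in H^1$. Approximating and passing to the limit in the identity of Step 1 (or working directly with $w = u/v \in H^1_{loc}$ and justifying the substitution in the weak sense), one gets $\int v^2 |\nabla(u/v)|^2 = 0$, hence $\nabla(u/v) = 0$ a.e., so $u/v$ is constant and $u \in \mathbb{R}v$. Conversely $L_\phi v = 0$ gives $\langle L_\phi v, v\rangle = 0$, so the null set of the form is exactly $\mathbb{R}v$, which is the second assertion.

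\emph{Main obstacle.} The only genuinely delicate point is the justification of the ground-state substitution $u = vw$ at the level of $H^1$ rather than $C_c^\infty$: one must control the divergence/boundary term $\mathrm{div}(v w^2 \nabla v)$ and make sense of $w = u/v$ near regions where $v$ is small (here $v$ decays at infinity but stays positive, so the quotient is well-defined pointwise; integrability is the issue). The clean way is to do Step 1 for $u \in C_c^\infty$, get the inequality on all of $H^1$ by density (Step 2), and then for the equality case work with a truncation $w_R = \min(w^+, R)\chi_R - \min(w^-,R)\chi_R$ or simply invoke that the form attains its minimum $0$ and the corresponding minimizer is a weak solution $L_\phi u = 0$, whence elliptic regularity plus the strong maximum principle (applied to $v \pm \varepsilon u$, or to $|u|$, which is also a nonnegative solution-like object) forces $u$ to not change sign and then to be proportional to $v$. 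Since the paper explicitly says this is "an adaptation of the proof from \cite{Arghir}", I would follow that reference for the technical implementation and keep the exposition at the level of the identity in Step 1 plus the density and maximum-principle arguments.
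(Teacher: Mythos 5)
Your proposal is correct and follows essentially the same route as the paper: the ground-state substitution $g=u/v$, the identity $\left\langle L_\phi u,u\right\rangle=\int v^2|\nabla (u/v)|^2$ obtained via integration by parts using $L_\phi v=0$, extension from $C^\infty_c$ to $H^1$ by density, and the equality case forced by $\nabla(u/v)=0$. Your additional remarks on justifying the substitution at the $H^1$ level only elaborate on what the paper dispatches as a ``standard density argument.''
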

\begin{proof}
    For $u \in C^\infty_C$, we define $g$ as $g:= \frac u v $. We can compute, by integration by parts and hypothesis on $v$: 
    \begin{align*}
        \left \langle L_{\phi}u, u\right \rangle&= -\int g^2 v \Delta v  -\int v^2 g \Delta g - 2\int vg \nabla g \nabla v + 2\sigma \int g^2 v^2 - 2\int g^2\sin (2\phi) v^2  
        \\ &= \int g^2 vL_{\phi } v  -\int v^2 g\Delta g - \int g \nabla g  \nabla (v^2)\\&= 
        \int v^2 | \nabla g|^2
    \end{align*}
    By standard density argument, the latter equality holds for any $u \in H^1$. Hence we have $\left \langle Lu , u\right \rangle\ge 0 $ for any $u$, and the equality holds if and only if $\nabla g= 0$, i.e. $u= cv$ for a constant $c$. 
\end{proof}
\begin{lemma}
    Then there exists $\tau_2>0 $ such that
    \begin{equation*}
        \left \langle \mathcal{L}_2^{(v,\phi)} \eta, \eta\right \rangle \ge \tau_2 \|\eta\|_{H^1}^2
    \end{equation*}
    for any $(v,\phi)  \in \mathcal{M}^\sigma$, and for any $\eta\in H^1 \left (\R^2; \R\right)$ satisfying $\left (\eta, v\right )_{H^1}=0$
\end{lemma}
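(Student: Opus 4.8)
The plan is to exploit the identity $\mathcal{L}_2^{(v,\phi)} = L_1^{(v,\phi)} = -\Delta + 2\sigma - 2\sin(2\phi)$: this is a non-negative self-adjoint Schrödinger operator whose kernel is exactly the line $\R v$, so $0$ sits isolated at the bottom of its spectrum. From there I would extract an $L^2$ spectral gap, convert the hypothesis $(\eta,v)_{H^1}=0$ into $L^2$-orthogonality to $v$, and finally upgrade $L^2$-coercivity to $H^1$-coercivity by interpolation. The only genuinely delicate point is making every constant uniform as $(v,\phi)$ ranges over $\mathcal{M}^\sigma$.

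First I would record the spectral picture for a fixed $(v,\phi)\in\mathcal{M}^\sigma$. By \eqref{sigma}, $v\in H^2$ solves $\mathcal{L}_2^{(v,\phi)}v=0$, and $v>0$ on $\R^2$ by the strong maximum principle (applied to $-\Delta v=(2\sin 2\phi-2\sigma)v$, whose zeroth-order term is bounded). Lemma~\ref{Spec}, applied with this $v$, gives $\langle\mathcal{L}_2^{(v,\phi)}u,u\rangle\ge 0$ for all $u\in H^1$ with equality exactly on $\R v$; in particular $\mathrm{Ker}\,\mathcal{L}_2^{(v,\phi)}=\R v$ and $0=\inf\sigma(\mathcal{L}_2^{(v,\phi)})$ is a simple eigenvalue. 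Since $\phi\in H^2(\R^2)$ vanishes at infinity, multiplication by $-2\sin(2\phi)$ is relatively compact with respect to $-\Delta$, so Weyl's theorem gives $\sigma_{\mathrm{ess}}(\mathcal{L}_2^{(v,\phi)})=[2\sigma,\infty)$; and $\sigma>0$ because $J_a<0$ forces $E^-(v,\phi)\le J_a<0$, which by \eqref{lala} means $2\sigma\|v\|_{L^2}^2>0$. Hence $0$ is isolated and $c_0:=c_0(v,\phi):=\mathrm{dist}\big(0,\sigma(\mathcal{L}_2^{(v,\phi)})\setminus\{0\}\big)>0$, so the spectral theorem yields $\langle\mathcal{L}_2^{(v,\phi)}u,u\rangle\ge c_0\|u\|_{L^2}^2$ for every $u\perp_{L^2} v$.

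Given $\eta\in H^1(\R^2;\R)$ with $(\eta,v)_{H^1}=0$, I would split $\eta=\eta_0+sv$ with $(\eta_0,v)_{L^2}=0$. Because $\mathcal{L}_2^{(v,\phi)}v=0$ and the form is symmetric, all cross terms drop and $\langle\mathcal{L}_2^{(v,\phi)}\eta,\eta\rangle=\langle\mathcal{L}_2^{(v,\phi)}\eta_0,\eta_0\rangle$. The constraint gives $s=-(\eta_0,v)_{H^1}/\|v\|_{H^1}^2$, whence $|s|\,\|v\|_{H^1}\le\|\eta_0\|_{H^1}$ and $\|\eta\|_{H^1}\le 2\|\eta_0\|_{H^1}$. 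To pass from $L^2$ to $H^1$ I would combine the spectral gap bound with the trivial lower bound $\langle\mathcal{L}_2^{(v,\phi)}\eta_0,\eta_0\rangle\ge\|\nabla\eta_0\|_{L^2}^2-2\|\eta_0\|_{L^2}^2$ (using $0\le\sin2\phi\le 1$): a convex combination with enough weight on the spectral bound produces $\langle\mathcal{L}_2^{(v,\phi)}\eta_0,\eta_0\rangle\ge\tau'(c_0)\|\eta_0\|_{H^1}^2$ with $\tau'(c_0)>0$ depending only on $c_0$. Combining the three estimates, $\langle\mathcal{L}_2^{(v,\phi)}\eta,\eta\rangle\ge\tfrac14\tau'(c_0)\|\eta\|_{H^1}^2$.

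It remains to make $\tau_2$ independent of $(v,\phi)\in\mathcal{M}^\sigma$. Here I would invoke the $H^2\times H^2$-compactness of $\mathcal{M}^\sigma$ proved above: $\|v\|_{H^1}\ge\|v\|_{L^2}=\sqrt a$ is bounded below on $\mathcal{M}^\sigma$, and $c_0(v,\phi)$ is bounded below because along any sequence $(v_n,\phi_n)\to(v_*,\phi_*)$ in $H^2\times H^2$ (with $(v_*,\phi_*)\in\mathcal{M}^\sigma$) one has $\|\sin(2\phi_n)-\sin(2\phi_*)\|_{L^\infty}\to 0$ by the embedding $H^2(\R^2)\hookrightarrow L^\infty$, so $\mathcal{L}_2^{(v_n,\phi_n)}\to\mathcal{L}_2^{(v_*,\phi_*)}$ in operator norm; since $0$ remains an eigenvalue (with eigenfunction $v_n$) and the rest of the spectrum cannot descend by more than the perturbation size, $\liminf_n c_0(v_n,\phi_n)\ge c_0(v_*,\phi_*)>0$. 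Thus $\tau_2:=\tfrac14\inf_{\mathcal{M}^\sigma}\tau'(c_0(v,\phi))>0$ works. The main obstacle is precisely this last step — the spectral gap of $\mathcal{L}_2^{(v,\phi)}$ must not degenerate over $\mathcal{M}^\sigma$ — which is why the $H^2$-compactness of $\mathcal{M}^\sigma$ was set up in advance.
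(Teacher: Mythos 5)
Your argument is correct, but it is a genuinely different route from the paper's. The paper proves this lemma by contradiction and compactness: it takes sequences $(v_j,\phi_j)\in\mathcal{M}^\sigma$ and $\eta_j$ with $\|\eta_j\|_{H^1}=1$, $(\eta_j,v_j)_{H^1}=0$ and $\langle L_{\phi_j}\eta_j,\eta_j\rangle\to 0$, uses the $H^2$-convergence of $(v_j,\phi_j)$ and weak $H^1$ (plus local $L^p$) convergence of $\eta_j$ to pass to the limit in the potential term $\int|\eta_j|^2\sin(2\phi_j)$, deduces that the weak limit $\eta$ is nonzero, satisfies $(\eta,v)_{H^1}=0$ and $\langle L_\phi\eta,\eta\rangle\le 0$, and concludes directly from the strict positivity off $\R v$ in Lemma~\ref{Spec}; no spectral theorem, no essential-spectrum computation, and the uniformity over $\mathcal{M}^\sigma$ comes for free from the contradiction setup. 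You instead build, for each fixed $(v,\phi)$, a quantitative spectral gap $c_0>0$ (kernel $=\R v$ from Lemma~\ref{Spec}, essential spectrum $[2\sigma,\infty)$ with $\sigma>0$ via Weyl), reduce the $H^1$-orthogonality to $L^2$-orthogonality by the splitting $\eta=\eta_0+sv$ (legitimate, since $\mathcal{L}_2 v=0$ kills the cross terms and the constraint controls $|s|$), interpolate the gap with the crude bound $\|\nabla\eta_0\|_{L^2}^2-2\|\eta_0\|_{L^2}^2$ to get $H^1$-coercivity, and then make the constant uniform by spectral stability under the $H^2\times H^2$-compactness of $\mathcal{M}^\sigma$. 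What your approach buys is an explicit, quantitative gap and a cleaner separation of the two constraints; what it costs is precisely the last step, which in your write-up is the least rigorous: ``the rest of the spectrum cannot descend by more than the perturbation size'' should be justified, e.g.\ by the Lipschitz dependence of the second min--max value on a bounded perturbation (so $c_0(v_n,\phi_n)\ge\mu_2(\mathcal{L}_2^{(v_n,\phi_n)})\ge\mu_2(\mathcal{L}_2^{(v_*,\phi_*)})-\|\sin(2\phi_n)-\sin(2\phi_*)\|_{L^\infty}$), or by norm-resolvent convergence and constancy of the rank of the Riesz projection on a small disc around $0$; also ``convergence in operator norm'' should be phrased as convergence of the (bounded) multiplication-operator difference. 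These are standard repairs, so I regard your proposal as correct, just with one step to tighten where the paper's softer compactness argument avoids the issue altogether.
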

\begin{proof}
$\mathcal{L}_2^{(v,\phi)}= L_{\phi}$, and $v$ is a strictly positive solution of $L_{\phi} v=0 $.
    \\Assume the thesis were false. Since $\mathcal{L}_2 $ is a nonnegative operator by Lemma \ref{Spec}, there exist sequences $(v_j, \phi_j) \in \mathcal{M}^\sigma$, $\eta_j\in H^1$ such that
\begin{equation*}
    0= \lim_{j\to \infty} \left \langle L_{\phi_j}\eta_j, \eta_j \right \rangle; \ \ \ \|\eta_j\|_{H^1}=1; \ \ \ \left (\eta_j, v_j \right ) _{H^1}=0
\end{equation*}
$(v_j, \phi_j) \to (v,\phi) \in \mathcal{M}^\sigma$ strongly in $H^2$; up to subsequences, $\eta_j$ converges to $\eta$ weakly in $H^1$ and in $L^p$ for any $2\le p < \infty$. Product of strong and weak convergence preserves the orthogonality relation and
\begin{equation}
\label{ggg}
    \lim_{j\to \infty }\int |\eta_j|^2\sin (2\phi_j) = \int |\eta|^2 \sin (2\phi)
\end{equation}
It follows
\begin{equation*}
0= \lim_{j\to \infty} \left \langle L_{\phi_j}\eta_j, \eta_j \right \rangle \ge \min \left \{ 1, 2\sigma \right \} - \lim_{j} \int |\eta_j|^2\sin (2\phi_j) = \min \left \{ 1, 2\sigma \right \} - \int |\eta|^2\sin (2\phi)
\end{equation*}
Hence $\eta\neq 0$ and, by weak convergence and equality \eqref{ggg} we have $\left \langle L_\phi\eta, \eta\right \rangle \le 0$ with $(\eta, v)_{H^1}=0;$ but this contradicts Lemma \ref{Spec}.
\end{proof}
We want a similar bound for $\mathcal{L}_1$, following the same steps. In particular we want to prove \begin{enumerate}
    \item  $\mathcal{L}_1^{(v,\phi)}$ is positive in the subspace $L^2$ orthogonal to $v$
    \item The Kernel of $\mathcal{L}_1^{(v,\phi)}$ is spanned by the translation generators and the tangent space to $\mathcal{M}^\sigma$
    \item The spectrum of $\mathcal{L}_1^{(v,\phi)}$ restricted to \eqref{cond3}-\eqref{cond4} is bounded away from $0$ uniformly for $(v,\phi) \in \mathcal{M}^\sigma$
\end{enumerate} 
Since the proof for $\mathcal{L}_1$ is more complicated, we split each point in a different Lemma.
\begin{lemma}
\label{para}
     Let $(v,\phi)$ be a ground state as in Theorem \ref{min}. Then 
     \begin{equation*}
         \left \langle \left. S^{''}\right.|_{v,\phi} (\eta, \theta)^t,(\eta, \theta)^t\right \rangle \ge 0 \ \ \ \ \forall \ \eta, \ \theta \in H^1\ \mathrm{such \ that}\ \left (\eta, v\right )_{L^2}=0
     \end{equation*}
\end{lemma}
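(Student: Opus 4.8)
The plan is to exploit the fact that $(v,\phi)$ is a minimizer of $E$ over the constraint $S_a$ with $a=\|v\|_{L^2}^2$. First I would recall that, writing $S=E+\sigma Q$, the Lagrange multiplier structure of \eqref{sigma}--\eqref{sig} gives $S'(v,\phi)\equiv 0$. Now take any real-valued $\eta\in H^1$ with $(\eta,v)_{L^2}=0$ and any real-valued $\theta\in H^1$. The condition $(\eta,v)_{L^2}=0$ is exactly the linearization at $s=0$ of the constraint $\|v+s\eta\|_{L^2}^2=\|v\|_{L^2}^2+O(s^2)$; so I would construct a genuine admissible curve in $S_a$: set $\lambda(s):=\|v\|_{L^2}/\|v+s\eta\|_{L^2}$, so that $\lambda(s)=1+O(s^2)$, $\lambda'(0)=0$, and the curve $\gamma(s):=(\lambda(s)(v+s\eta),\phi+s\theta)$ lies in $S_a$ for all small $s$. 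Since $(v,\phi)$ minimizes $E$ on $S_a$, the function $s\mapsto E(\gamma(s))$ has a minimum at $s=0$, hence $\frac{d^2}{ds^2}\big|_{s=0}E(\gamma(s))\ge 0$.

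Next I would compute this second derivative and identify it with $\langle S''|_{v,\phi}(\eta,\theta)^t,(\eta,\theta)^t\rangle$. Because $Q$ is constant along $\gamma$, we have $E(\gamma(s))=S(\gamma(s))-\sigma Q(v)$, so $\frac{d^2}{ds^2}|_0 E(\gamma(s))=\frac{d^2}{ds^2}|_0 S(\gamma(s))$. Expanding $S$ to second order along $\gamma$ and using $S'(v,\phi)=0$ together with $\lambda'(0)=0$ (so that the first-derivative direction of $\gamma$ at $s=0$ is precisely $(\eta,\theta)$, the correction $\lambda''(0)v$ entering only the first-order term which vanishes), one gets
\begin{equation*}
0\le \frac{d^2}{ds^2}\Big|_{s=0}S(\gamma(s)) = \big\langle S''|_{v,\phi}(\eta,\theta)^t,(\eta,\theta)^t\big\rangle,
\end{equation*}
which is the claim for real-valued $\eta$. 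For a general complex $\eta=\eta_1+i\eta_2$, I would use the block decomposition \eqref{calc''}: the quadratic form splits as $\frac12\langle\mathcal{L}_1(\eta_1,\theta)^t,(\eta_1,\theta)^t\rangle+\frac12\langle\mathcal{L}_2\eta_2,\eta_2\rangle$, the second term is nonnegative by Lemma \ref{Spec} (with $v>0$ solving $\mathcal{L}_2v=L_\phi v=0$), and the first is handled by the real-variable argument above applied to $(\eta_1,\theta)$ — noting that $(\eta,v)_{L^2}=0$ forces $(\eta_1,v)_{L^2}=0$ since $v$ is real.

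The main technical point to be careful about is the justification that $s\mapsto E(\gamma(s))$ is twice differentiable at $s=0$ with the derivative given by the formal expansion: the nonlinear terms $|u|^2\sin(2\theta)$ etc. are smooth pointwise, and with $\eta,\theta\in H^1(\R^2)$ the Sobolev embeddings $H^1\hookrightarrow L^p$ for $p<\infty$ control all the terms, so this is routine but should be stated. The only genuinely delicate alternative — avoiding the curve construction and instead differentiating twice directly — would require handling the nonlinearity's regularity anyway, so the curve approach is cleanest. I expect no serious obstacle here; this lemma is the ``easy half'' ($S''\ge0$), and the real work lies in the subsequent lemmas upgrading nonnegativity to coercivity by controlling the kernel.
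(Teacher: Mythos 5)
Your argument is correct, and it rests on the same underlying strategy as the paper's proof — exploit minimality of $(v,\phi)$ over $S_a$ after renormalizing $v+s\eta$ back onto the constraint — but the bookkeeping of the renormalization is genuinely different. The paper works with a finite increment $(\eta,\theta)$: it kills the first-order term via $E'(v,\phi)(w,\theta)=-\sigma Q'(v)(w)$ and the orthogonality, projects $v+\eta$ onto $S_a$ by the factor $k$, and then controls the cost of that projection through the splitting $E=E^++E^-$, the quadratic scaling of $E^-$ in $u$, and the virial-type identity $2\sigma\|v\|_{L^2}^2=-4E^-(v,\phi)$ (which, as a useful byproduct, also gives $0<\sigma<1$, an estimate reused later); the conclusion is $\langle S''(\eta,\theta)^t,(\eta,\theta)^t\rangle\ge o(\|\eta\|_{H^1}^2+\|\theta\|_{H^1}^2)$, hence $\ge 0$ by homogeneity of the quadratic form. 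You instead differentiate twice along the admissible curve $\gamma(s)=(\lambda(s)(v+s\eta),\phi+s\theta)$ and observe that, since $\lambda'(0)=0$, the normalization correction $\lambda''(0)v$ enters only through $S'(v,\phi)=0$; this dispenses with the $E^{\pm}$ decomposition and the identity \eqref{lala} altogether and is arguably the cleaner route, at the price of having to justify twice differentiability of $s\mapsto E(\gamma(s))$ (routine in $2$d by Sobolev embedding, and implicitly assumed in the paper's own Taylor expansions). Your treatment of complex $\eta$ via the block structure \eqref{calc''} together with Lemma \ref{Spec} is also sound and matches what the paper does in Remark \ref{para2}, except that the paper's expansion applies to complex $\eta$ directly while you reduce to the real part $\eta_1$ (for which $(\eta_1,v)_{L^2}=0$ indeed follows). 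No gap; both proofs are complete.
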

\begin{proof}
By equations \eqref{sigma}-\eqref{sig} we have 
\begin{equation*}
    E'(v, \phi)(w, \theta ) = -\sigma Q'(v)(w)
\end{equation*}
For $\alpha:= \phi + \theta$, by the orthogonality relation in $L^2$ we can expand the energy as
\begin{align}
   \notag E(v+\eta, \alpha) &= E(v,\phi) +  E'(v, \phi)(\eta, \theta )+\frac{1}{2} \left \langle E''(v, \phi) (\eta, \theta)^t,(\eta,\theta)^t\right \rangle + o(\|\eta\|_{H^1}^2 +\|\theta\|_{H^1}^2) \\&= E(v,\phi) + \frac{1}2\left \langle  \left.E''\right.|_{(v, \phi)} (\eta, \theta)^t,(\eta,\theta)^t \right \rangle + o(\|\eta\|_{H^1}^2 + \|\theta\|_{H^1}^2) 
    \label{asd}
\end{align} 
To complete the energy comparison, the left hand side must be reduced to an element in $S_a$. We define hence 
\begin{equation*}
    \tilde{u} := \frac{\|v\|_{L^2} }{\sqrt{\|v\|_{L^2}^2 + \|\eta\|_{L^2}^2}} (v+\eta) = k (v+\eta)
\end{equation*}
For the orthogonality condition, one immediately checks $\tilde{u}\in S_a$. By definition of $E^-$ we have
\begin{equation}
    \label{sab}
E^+( \alpha) + E^- (v+\eta,\alpha) = E(\tilde{u}, \alpha) + \left (\frac{1}{k^2}-1\right ) E^-(\tilde{u}, \alpha) \ge E(v,\phi) + \frac{\|\eta\|_{L^2}^2} {\|v\|_{L^2}^2}E^-(\tilde{u}, \alpha)  
\end{equation}
where in the last inequality we have taken advantage of the minimality of $(v,\phi)$. Combining \eqref{asd} and \eqref{sab} we have
\begin{equation}
\label{pegg}
    \left \langle \left. E''\right . |_{v,\phi} (\eta,\theta), (\eta,\theta) \right \rangle \ge  \frac{2E^-(\tilde{u}, \alpha) }{\|v\|_{L^2}^2}\|\eta\|_{L^2}^2 + o(\|\eta\|_{H^1}^2+ \|\theta\|_{H^1}^2) 
\end{equation}
We can infer a positive condition on $\sigma$: in particular multiplying \eqref{sigma} by $v$ and integrating we get
\begin{align}
    &2 \sigma = \frac{\int_{\R^2} -|\nabla v|^2 + 2|v|^2 \sin (2\theta) \,dx}{\|v\|_{L^2}^2}= \frac{-4E^-(v,\phi)}{\|v\|_{L^2}^2} > \frac{- 4J_a}{\|v\|_{L^2}^2} \ge 0 
    \label{peg}
    \\&\sigma < \frac{\int_{\R^2} |v|^2 \sin (2\theta) \,dx}{\|v\|_{L^2}^2}\le 1
    \notag
\end{align}
We have $E^-(\tilde{u}, \alpha)=E^-(v,\phi)+ o(1)$, with $o(1) \to 0$ for $(\|\eta\|_{H^1} + \|\theta\|_{H^1}) \to 0$. Hence combining \eqref{pegg}, \eqref{peg} and the definition of $S$ we deduce for $\eta$ small enough
\begin{align*}
     &\left \langle \left. S^{''}\right.|_{v,\phi} (\eta,\theta), (\eta,\theta) \right \rangle  =\left \langle \left. E''\right . |_{v,\phi} (\eta,\theta), (\eta,\theta) \right \rangle +\sigma \|\eta\|_{L^2}^2 \
     \\
     & \ge 2\frac{  E^-(\tilde{u},\alpha)-E^-(v,\phi)}{\|v\|_{L^2}^2} \|\eta\|_{L^2}^2 + o\left (\|\eta\|_{H^1}^2+\|\theta\|_{H^1}^2\right) = o\left(\|\eta\|_{H^1}^2+ \|\theta\|_{H^1}^2\right)
\end{align*}
\end{proof}
\begin{remark}
\label{para2}
    This naturally implies 
    \begin{equation*}
       \left \langle\mathcal{L}_1^{(v,\phi)}(\eta,\theta)^t , (\eta,\theta)^t\right \rangle  \ge 0 \ \ \ \forall \ (v,\phi )\in \mathcal{M}^{\sigma}; \ \ \eta, \theta \in H^1 \ | \ \left ( \eta , v\right )_{L^2}=0
    \end{equation*}
    by restricting the previous calculation to real valued $\eta$.
\end{remark}
We now characterize the Kernel of $\mathcal{L}_1$. This is an adaptation of Appendix A in \cite{W}.
\begin{lemma}
\label{LemKer}
    Let $(v,\phi)$ be a ground state, $v,\phi \in \mathcal{M}^\sigma$, and $\mathcal{L}_1^{(v,\phi)}$ be as in \eqref{defop}. Then the Kernel of $\mathcal{L}_1^{(v,\phi)} $ is spanned by $(\partial_l v, \partial_l \phi)$, $l=1,2$ and the tangent space to $\mathcal{M}^\sigma$ in $(v,\phi)$.
\end{lemma}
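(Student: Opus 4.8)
The plan is to adapt \cite{W}, Appendix A, to the coupled setting: decompose the kernel into angular Fourier modes, pin down the non-radial part by a Perron--Frobenius argument on the mode operators, and treat the radial mode separately, which is where $\mathcal{M}^\sigma$ enters. The easy inclusion is immediate: differentiating \eqref{sigma}--\eqref{sig} in $x_l$ gives $\mathcal{L}_1^{(v,\phi)}(\partial_l v,\partial_l\phi)^t=0$ for $l=1,2$, and differentiating along a curve in $\mathcal{M}^\sigma$ through $(v,\phi)$ (all of whose elements solve the same system with the same $\sigma$) shows $T_{(v,\phi)}\mathcal{M}^\sigma\subset\mathrm{Ker}\,\mathcal{L}_1^{(v,\phi)}$. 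Since $\mathcal{M}^\sigma\subset(H^1_{rad})^2$ its tangent vectors are radial, while $(\partial_l v,\partial_l\phi)$ lies in the first angular mode, so the sum is direct; it remains to prove the reverse inclusion.

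Because $v,\phi$ are radial, $\mathcal{L}_1^{(v,\phi)}$ commutes with rotations and its kernel splits over the modes $e^{ik\vartheta}$, $k\in\Z$; the radial profile of the $k$-th mode solves a one-dimensional system $\mathcal{L}_1^{(k)}(\xi_k,\zeta_k)^t=0$ with $-\Delta$ replaced by $-\partial_r^2-\tfrac1r\partial_r+\tfrac{k^2}{r^2}$. The decisive point is that the off-diagonal coefficient $-4v\cos(2\phi)$ is strictly negative ($v>0$ by the strong maximum principle, $\phi<\pi/4$ by the argument in the proof of Lemma \ref{leman}), so each $\mathcal{L}_1^{(k)}$ is a cooperative matrix Schrödinger operator whose essential spectrum is contained in $[c,\infty)$ for some $c>0$. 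By Proposition \ref{decay}, $(\partial_1 v,\partial_1\phi)$ has radial profile $(\partial_r v,\partial_r\phi)$ with $\partial_r v\le0$, $\partial_r\phi\le0$, so $(-\partial_r v,-\partial_r\phi)$ is a nonzero nonnegative element of $\mathrm{Ker}\,\mathcal{L}_1^{(1)}$, showing that $0$ is an eigenvalue of $\mathcal{L}_1^{(1)}$ below its essential spectrum. By Krein--Rutman a nonnegative eigenvector of a cooperative operator must be the principal one (otherwise it would be $L^2$-orthogonal to the strictly positive principal eigenfunction), hence $0$ is the simple principal eigenvalue of $\mathcal{L}_1^{(1)}$ and $\mathrm{Ker}\,\mathcal{L}_1^{(1)}=\Span\{(\partial_r v,\partial_r\phi)\}$, which reassembles into $\Span\{(\partial_l v,\partial_l\phi):l=1,2\}$. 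For $|k|\ge2$, $\mathcal{L}_1^{(k)}=\mathcal{L}_1^{(1)}+\tfrac{k^2-1}{r^2}\,\mathrm{diag}(1,\lambda)$ is strictly positive on its domain, so $\mathrm{Ker}\,\mathcal{L}_1^{(k)}=\{0\}$.

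There remains the radial mode $k=0$, where $\mathcal{L}_1^{(0)}=\mathcal{L}_1^{(1)}-\tfrac1{r^2}\,\mathrm{diag}(1,\lambda)$ may carry negative spectrum, and one must show $\mathrm{Ker}\,\mathcal{L}_1^{(0)}\cap(H^1_{rad})^2\subseteq T_{(v,\phi)}\mathcal{M}^\sigma$. Here I would use the variational structure: by Lemma \ref{para} the operator $\mathcal{L}_1^{(0)}$ is nonnegative, self-adjoint and Fredholm on $\{v\}^\perp_{L^2}$, in which every radial kernel element lies; equivalently, after the substitution $\phi=\Theta(v)$ of Lemma \ref{minan} (legitimate since $\mathcal{L}_2$ is coercive, so the matrix kernel is in bijection with that of the scalar Schur complement), such an element is a zero mode of the Hessian of the reduced action $S(\,\cdot\,,\Theta(\cdot))$ at a point where $v$ minimizes this reduced action over the sphere $\{\|w\|_{L^2}=\|v\|_{L^2}\}$. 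One then has to promote such a zero mode to a tangent vector of an actual family of ground states: I would run a Lyapunov--Schmidt reduction for \eqref{sigma}--\eqref{sig} at $(v,\phi)$ and use that the reduced action, restricted to the finite-dimensional kernel, attains there a minimum with vanishing Hessian, in order to force the bifurcation map to vanish, so that the local set of radial ground states is a manifold tangent to the kernel. This last step is the main obstacle: controlling the Lyapunov--Schmidt obstruction — that is, excluding a spurious zero mode not tangent to any curve of ground states — is exactly the difficulty imposed by the absence of a uniqueness/nondegeneracy theorem for \eqref{sigma}--\eqref{sig}, and is the reason $\mathcal{M}^\sigma$ is kept implicit throughout rather than collapsed onto $\{(v,\phi)\}$.
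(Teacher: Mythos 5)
Your mode decomposition and the treatment of the non-radial sectors are sound and constitute a genuine alternative to the paper's argument there: the paper handles $k=1$ by the substitution $u=h\,v'$, $\theta=j\,\phi'$ and an explicit factorization of the quadratic form in the spirit of Lemma \ref{Spec} (using the strict negativity of $v',\phi'$ on $(0,\infty)$ from \cite{GNM}, and the sign of the coupling $-4v\cos(2\phi)$), whereas you invoke Krein--Rutman/Perron--Frobenius for the cooperative system; both yield that the first mode kernel is $\R(v',\phi')$ and that the higher modes, obtained by adding the positive potential $\tfrac{k^2-1}{r^2}\mathrm{diag}(1,\lambda)$, have trivial kernel. (Minor point: the monotonicity $\partial_r v,\partial_r\phi\le 0$ comes from Theorem \ref{min}, not from Proposition \ref{decay}, and your auxiliary claim that every radial kernel element is $L^2$-orthogonal to $v$ is not available a priori: pairing a kernel element $(u,\theta)$ with $\mathcal{L}_1^{(v,\phi)}(v,0)^t=(0,-4v^2\cos(2\phi))^t$ only gives $\int\theta\, v^2\cos(2\phi)=0$.)

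The genuine gap is the radial sector $k=0$, which is exactly the part of the statement in which $\mathcal{M}^\sigma$ appears: one must show that every element of $M:=\mathrm{Ker}\,\mathcal{L}_1^{(v,\phi)}\cap(H^1_{rad})^2$ is tangent to an actual local family of solutions of \eqref{sigma}--\eqref{sig} with the same $\sigma$, i.e. $M\subseteq T_{(v,\phi)}\mathcal{M}^\sigma$. You propose a Lyapunov--Schmidt reduction and then to ``force the bifurcation map to vanish'' from the fact that the reduced action has a minimum with vanishing Hessian on $M$; you yourself flag this as unresolved, and as stated the implication is false: a reduced functional behaving like $|m|^4$ has a minimum with vanishing Hessian at $0$ while $0$ is its only nearby critical point, in which case the local solution set would reduce to the single point $(v,\phi)$ and the claimed identity between kernel and tangent space would fail. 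So your proposal does not establish the lemma. The paper closes this case by a different (and shorter) device: it introduces the map $F:(H^2_{rad})^2\to(L^2)^2$ given by the left-hand sides of \eqref{sigma}--\eqref{sig}, notes $DF|_{(v,\phi)}=\mathcal{L}_1^{(v,\phi)}$, splits $H^2_{rad}=M\oplus M^{\perp}$ and uses that $\mathcal{L}_1^{(v,\phi)}$ restricted to $M^\perp$ is an isomorphism onto its closed range to obtain, via the implicit function theorem, a smooth local manifold of solutions of $F=0$ whose tangent space at $(v,\phi)$ is $M$; since $F=0$ implies $S'=0$, the action is constant on this manifold, which therefore locally coincides with $\mathcal{M}^\sigma$ as defined in \eqref{DefM} (this is also what underlies Corollary \ref{corM}). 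That identification of $M$ with the tangent space of the local solution set is the ingredient missing from your argument, and without it the $k=0$ case --- and hence the lemma --- is not proved.
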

\begin{proof}
Since $v, \phi $, acting as potentials for $\mathcal{L}_1^{(v,\phi)}$, are radial, a $0$ eigenfunction $(u, \theta ) $ is written as the product of a radial function and a spherical harmonic; that is $(u(x), \theta(x))= (f(r) Y_k (\alpha), g(r) Y_k(\alpha))$ with $f, g \in H^1((0,\infty); rdr)$ and $Y_k \in H^1(S^1)$ satisfying
\begin{equation} 
    \begin{cases}
    \label{vk}
        &-\left.\Delta\right|_{S^1} Y_k = \lambda_k Y_k  
        \\ & A_k^1 (f,g) : = \left (-\frac{d^2}{dr^2} - \frac{1}{r} \frac{  d}{dr} + 2\sigma - 2\sin (2\phi) + \frac {\lambda_k }{r^2}\right ) f - 4v\cos(2\phi) g = 0 %
        \\& A_k^2 (f,g) : = \left (-\lambda \frac{d^2}{dr^2} - \lambda \frac{1}{r} \frac{  d}{dr} + q\cos(2\phi) + |v|^2\sin (2\phi) + \lambda \frac {\lambda_k} {r^2}\right ) g - 4v \cos(2\phi)f = 0 
    \end{cases}
\end{equation}
where $k=0,1,2...$, $\lambda_k = k^2 $ are the eigenvalues of the Laplacian on $S^1$ and $Y_k$ is the associated eigenfunction. Such eigenfunctions are the restriction to $S^1$ of harmonic polynomials of degree $k$ (\cite{PSF}, Chapter 7). Moreover $f,g$ satisfy the proper boundary condition in $0$: for $k\ge1 $ $f(0)=g(0)=0$. 
\\We split the proof for different values of $k$
\bigskip\\  $\mathbf{k=1}$  We define $v', \phi'$ by the relation $(\partial_l v, \partial_l \phi)= (v', \phi') \frac{x_l}{|x|}$. \\$(v', \phi')^t\frac{x_l}{|x|}$ belongs to the Kernel of $\mathcal{L}_1^{(v,\phi)}$ by Remark \ref{0der}. Moreover $ v', \phi'$ are strictly negative in $(0,\infty)$ (\cite{GNM}, Theorem 3'), and their angular part is an harmonic polynomial of degree one, so they are a solution of \eqref{vk} for $k=1$.  
\\Adapting Lemma \ref{Spec}, we prove that those are the unique solutions. For $u(r), \theta(r) \in C^\infty_c(0,\infty)$ define $h(r):= \frac{u}{v'}; \ j(r):= \frac \theta{\phi'}$. For $Y_1(\alpha)$ a spherical harmonic of degree one, let $v'(r)Y_1(\alpha) =: V; \ \phi'(r) Y_1(\alpha) =: \Phi$. Hence we get
\begin{align*}
    &\left \langle \mathcal{L}_1 (u Y_1 , \theta Y_1 )^t,  (u Y_1, \theta Y_1)^t\right \rangle= \\&=- \int h^2 V \Delta (V) +h\nabla h \nabla \left(|V |^2\right ) + |V|^2 h \Delta h + \int h^2 |V|^2 (2\sigma- 2\sin(2\phi))- 8\int hj v\cos(2\phi) V \Phi -\\&- \lambda \int j^2 \Phi \Delta (\Phi) +j\nabla j \nabla \left(|\Phi|^2\right )+ |\Phi|^2 j \Delta j  + q \int j^2 (q\cos(2\phi) + |v|^2 \sin (2\phi)) |\Phi|^2 = 
    \\& = \int h^2 V L_1 V + \int j^2\Phi L_2\Phi+ \int V^2 |\nabla h|^2 + \lambda\Phi^2|\nabla j|^2  - 8\int hj v\cos(2\phi) V \Phi=\\& =\int h^2 V \left (L_1V-4\cos (2\phi)v \Phi\right ) + j^2\Phi\left( L_2\Phi  - 4\cos(2\phi) v V \right) + \\& +\int V^2 |\nabla h|^2 + \Phi^2|\nabla j|^2  + 2\int (h-j)^2 v\cos(2\phi) V \Phi  =\\& =   \int V^2 |\nabla h|^2 + \Phi^2|\nabla j|^2  + 2\int (h-j)^2 v\cos(2\phi) V \Phi 
    \end{align*}
    In the second equality we have used integration by parts, in the same spirit as Lemma \ref{Spec}. In the third we have splitted the last integral using the equality 
    \begin{equation*}
        -2hj = -h^2-j^2 + (h-j)^2
    \end{equation*}
    and in the last one we have used $\mathcal{L}_1 (V,\Phi)^t= (0,0)^t$.\\
    Hence by standard density argument
\begin{equation*}
    \left \langle \mathcal{L}_1 (u Y_1 , \theta Y_1 )^t,  (u Y_1, \theta Y_1)^t\right \rangle \ge 0 \ \ \ \forall \ u,\theta \in H^1_0 ((0,\infty) ; rdr)
\end{equation*}
and the inequality is strict unless $(u, \theta) \in \R (v', \phi')$.\bigskip\\ $\mathbf{k\ge2}$ We have, for $c_k^i>0$ 
\begin{equation*}
    A^i_k= A^i_1 + \frac{c_k^i}{r^2}
\end{equation*}
Since $A_1:=\left(A_1^1, A^2_1\right)$ defines a positive quadratic form by the previous calculation, $A_k$ is strictly positive for $k\ge 2$ and does not admit $0$ eigenfunctions.\bigskip\\$\mathbf{k=0}$ Let $M:= \mathrm{Ker} \mathcal{L}_1 \cap \left ( H^1_{rad} \times H^1_{rad} \right )$. By regularity argument $M\subset \left ( H^2 \times H^2 \right )$. \\We can define the $C^1$ function 
\begin{equation}
    F:\left (H^2_{rad}\right )^2 \to \left(L^2\right ) ^2; \ \ \ F(u,\theta) := \begin{pmatrix}
        &-\Delta u + 2\sigma u - 2\sin (2\theta) u
        \\& - \lambda \Delta \theta + q \sin (2\theta) - 2|u|^2\cos( 2\theta)
    \end{pmatrix}
\end{equation}
We have $F(v,\phi)=0$, and the Jacobian of $F$ at $(v,\phi)$ is
\begin{equation*}
     D\left.F\right|_{(v,\phi) } ( u, \theta)^t = \mathcal{L}_1 (u,\theta)^t \ \ \ \forall \ u, \theta \in H^2_{rad}
\end{equation*}
We split the Hilbert space $H^2_{rad} = M \oplus M^{\perp}$. Since $\mathcal{L}_1$ has no zero eigenvalue in $M^\perp$, its restriction is a isomorphism on the image. Hence, by implicit function Theorem, there exist a smooth manifold of solutions of $F(u,\theta)= 0$ in a neighbourhood of $(v,\phi)$, and $M$ is the tangent space to the manifold in $(v,\phi)$\\Since $F(u, \theta)= 0 $ implies $S'(u, \theta)=0 $, over the manifold the action $S$ is constant; the manifold locally coincides with $\mathcal{M }^\sigma$.
\end{proof}
The implicit function argument implies the following structure properties for $\mathcal{M   } ^\sigma$
\begin{corollary}
\label{corM}
    The manifold $\mathcal{M}^\sigma$ is at most of real dimension $2$. Moreover, if $(v_j, \phi_ j) \in \mathcal{M}^\sigma$ converges in $H^1$ to $(v,\phi)$, an orthonormal basis of $\mathrm{Ker}\mathcal{L}_1^{(v_j, \phi_j)} \cap \left ( H^1_{rad} \times H^1_{rad} \right )$ converges to a orthonormal basis of $\mathrm{Ker} \mathcal{L}_1^{(v, \phi)} \cap \left ( H^1_{rad} \times H^1_{rad} \right )$ 
\end{corollary}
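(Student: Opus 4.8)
# Proof Proposal for Corollary \ref{corM}

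\textbf{Overview of the approach.} The corollary has two assertions: (i) $\mathcal{M}^\sigma$ has real dimension at most $2$, and (ii) orthonormal bases of the radial kernels $\mathrm{Ker}\,\mathcal{L}_1^{(v_j,\phi_j)} \cap (H^1_{rad})^2$ vary continuously along convergent sequences in $\mathcal{M}^\sigma$. The plan is to extract both statements from the implicit function theorem machinery already set up in the proof of Lemma \ref{LemKer}. There, for a fixed ground state $(v,\phi)$, the map $F:(H^2_{rad})^2 \to (L^2)^2$ was shown to be $C^1$ with $D F|_{(v,\phi)} = \mathcal{L}_1^{(v,\phi)}$, and the kernel $M = \mathrm{Ker}\,\mathcal{L}_1^{(v,\phi)} \cap (H^1_{rad})^2$ was identified as the tangent space to the local solution manifold, which locally coincides with $\mathcal{M}^\sigma$.

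\textbf{Step 1: Dimension bound.} First I would recall from Lemma \ref{LemKer} that the $k=1$ and $k\ge 2$ spherical-harmonic sectors contribute only the translation generators $(\partial_l v,\partial_l\phi)$ and nothing else to $\mathrm{Ker}\,\mathcal{L}_1^{(v,\phi)}$; the radial ($k=0$) sector is exactly $M$. Since $\mathcal{M}^\sigma$ consists of \emph{radial} ground states, its tangent space at $(v,\phi)$ is contained in $(H^1_{rad})^2$, hence equals $M$. To bound $\dim M \le 2$: any element of $M$ is a radial $H^2$ solution $(f,g)$ of the system $A_0^1(f,g) = A_0^2(f,g)=0$ from \eqref{vk} with $k=0$. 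This is a second-order linear ODE system in $r$ for the pair $(f,g)$; the space of solutions that are regular at $r=0$ (i.e. $f'(0)=g'(0)=0$, the admissible Cauchy data for a radial $H^2$ function) is at most $2$-dimensional, since the two initial values $f(0),g(0)$ determine the solution. Hence $\dim M \le 2$, and by the identification with the tangent space, $\dim \mathcal{M}^\sigma \le 2$.

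\textbf{Step 2: Continuity of the kernel bases.} Suppose $(v_j,\phi_j) \to (v,\phi)$ in $H^1$ within $\mathcal{M}^\sigma$; by the compactness lemma (the unnumbered lemma preceding Proposition \ref{S''}) the convergence is actually in $H^2\times H^2$. The coefficients of $\mathcal{L}_1^{(v_j,\phi_j)}$ — namely $\sin(2\phi_j)$, $v_j\cos(2\phi_j)$, $|v_j|^2\sin(2\phi_j)$, $q\cos(2\phi_j)$ — then converge in $H^2$, hence in $L^\infty$ (Sobolev embedding in $2$D for $H^2$, or along subsequences pointwise), so $\mathcal{L}_1^{(v_j,\phi_j)} \to \mathcal{L}_1^{(v,\phi)}$ in the norm-resolvent sense as self-adjoint operators on $L^2_{rad}\times L^2_{rad}$. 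Since $0$ is an isolated eigenvalue of $\mathcal{L}_1^{(v,\phi)}$ of finite multiplicity (it is a compact perturbation of an operator with positive essential spectrum, as noted in Remark \ref{0der}), the Riesz spectral projections $P_j$ onto the spectral subspace of $\mathcal{L}_1^{(v_j,\phi_j)}$ near $0$ converge in operator norm to the projection $P$ onto $M$. Consequently $\dim \mathrm{Ker}\,\mathcal{L}_1^{(v_j,\phi_j)}\cap(H^1_{rad})^2 = \dim M$ for $j$ large, and applying $P_j$ to a fixed orthonormal basis of $M$ and Gram–Schmidt-orthonormalizing produces an orthonormal basis of the $j$-th radial kernel converging to the chosen basis of $M$. (One must also check that the spectral subspace near $0$ for $\mathcal{L}_1^{(v_j,\phi_j)}$ is in fact its kernel, i.e. no small nonzero eigenvalues accumulate; this follows because the limiting operator has a spectral gap around $0$ on $M^\perp$ and norm-resolvent convergence preserves gaps for large $j$.)

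\textbf{Main obstacle.} The delicate point is justifying the norm-resolvent convergence uniformly enough to conclude the spectral-projection convergence, together with ruling out that the radial kernels of $\mathcal{L}_1^{(v_j,\phi_j)}$ jump in dimension — equivalently, that no eigenvalue crosses zero in the limit. This is where the $H^2$-compactness of $\mathcal{M}^\sigma$ is essential: it upgrades the $H^1$ convergence hypothesis to control of the potentials in a topology strong enough to give convergence of the operators in the resolvent sense, and the finite-dimensionality and isolation of the kernel (Weyl's theorem, Remark \ref{0der}) then make the abstract perturbation argument for spectral projections (see e.g. Kato) go through. The ODE uniqueness argument in Step 1 is routine once one pins down the correct regular boundary behavior at the origin.
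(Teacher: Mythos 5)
Your Step 1 (the dimension bound via the radial ODE system: regularity of radial kernel elements, the condition $f'(0)=g'(0)=0$ regularising the singular terms, and uniqueness of the solution for prescribed values $f(0),g(0)$) is correct and is essentially the paper's argument. The problem is in Step 2, and it sits exactly at the point you yourself flag as delicate. Norm--resolvent convergence of $\mathcal{L}_1^{(v_j,\phi_j)}$ to $\mathcal{L}_1^{(v,\phi)}$ (which does follow from the $H^2$-compactness of $\mathcal{M}^\sigma$ and the embedding $H^2\hookrightarrow L^\infty$) gives convergence of the Riesz projections onto the spectral subspace in a small disk around $0$, hence that the \emph{total multiplicity} of the spectrum of $\mathcal{L}_1^{(v_j,\phi_j)}$ near $0$ eventually equals $\dim M$. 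It does \emph{not} give that this spectral subspace is the kernel: the $\dim M$ eigenvalues near $0$ could be small but nonzero for every $j$, in which case $\dim\mathrm{Ker}\,\mathcal{L}_1^{(v_j,\phi_j)}\cap(H^1_{rad})^2<\dim M$ and your Gram--Schmidt construction produces an orthonormal basis of the wrong subspace, not of the kernel. Your proposed fix --- ``the limiting operator has a spectral gap around $0$ on $M^\perp$ and norm-resolvent convergence preserves gaps'' --- only confines the perturbed eigenvalues to a shrinking neighbourhood of $0$; it cannot rule out eigenvalue splitting off zero (think of $L_0+\varepsilon P$ with $P$ a projection onto part of $\mathrm{Ker}\,L_0$: the kernel dimension drops for every $\varepsilon\neq 0$ while the eigenvalues tend to $0$). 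Since the corollary is later used precisely to guarantee that the limiting orthogonality conditions involve a \emph{full} basis of $\mathrm{Ker}\,\mathcal{L}_1^{(v,\phi)}\cap(H^1_{rad})^2$, this dimension equality cannot be waved away.

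The paper closes this hole with a different input, which your argument does not use: by Lemma \ref{LemKer} (implicit function theorem), the radial kernel at \emph{each} point of $\mathcal{M}^\sigma$ is the tangent space to the local solution manifold, and the dimension of that manifold in a neighbourhood of $(v,\phi)$ is fixed by $\dim\mathrm{Ker}\,\mathcal{L}_1^{(v,\phi)}\cap(H^1_{rad})^2$; since $(v_j,\phi_j)$ eventually lies in this neighbourhood, the kernel dimensions must coincide for $j$ large. For the convergence of the basis elements themselves, the paper argues directly by compactness rather than spectral projections: a bounded sequence of radial kernel elements converges strongly in $L^p$ (compact radial embedding), and elliptic regularity together with the strong convergence of the potentials upgrades this to strong $H^1$ convergence to an element of $\mathrm{Ker}\,\mathcal{L}_1^{(v,\phi)}$, preserving orthonormality. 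If you want to keep your spectral-projection route, you must import the manifold/IFT argument (or an equivalent reason why zero eigenvalues cannot split) to establish $\dim\mathrm{Ker}\,\mathcal{L}_1^{(v_j,\phi_j)}\cap(H^1_{rad})^2=\dim M$ for large $j$; as written, that step is a genuine gap.
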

\begin{proof}
   A radial integrable solution of $\mathcal{L}^{(v,\phi)}_1 (u,\theta)^t= 0 $ is smooth by elliptic regularity; the radial expression of the PDE's is a system of ODE's, with singular terms $\frac{u'}r, \frac{\theta ' }r$. Those are regularised by the condition $\theta' (0) = u'(0) = 0 $, and by an adaptation of Cauchy Theorem, there exists a unique solution for any fixed initial datum $\theta(0), u(0)$ (see \cite{ODE}). \\Let $(u_j, \theta_j)$ be a bounded sequence in $H^1_{rad}$ with $\mathcal{L}^{(v_j,\phi_j)}_1 (u_j,\theta_j)^t= 0 $. Boundness in $H^1_{rad}$ implies strong convergence in $L^p$; elliptic regularity and strong convergence of $(v_j, \phi_j)$ leads to strong convergence in $H^1$ to a radial solution of  $\mathcal{L}^{(v,\phi)}_1 (u,\theta)^t= 0 $. Finally, the dimension of $\mathrm{Ker}\mathcal{L}_1^{(v, \phi)} \cap \left ( H^1_{rad} \times H^1_{rad} \right )$ determines the dimension of the variety in a neighbourhood of $(v,\phi)$ and hence it must coincide, for $j$ large, with the dimension of $\mathrm{Ker} \mathcal{L}_1^{(v_j, \phi_j)} \cap \left ( H^1_{rad} \times H^1_{rad} \right )$.
\end{proof}
In the following Lemmas we prove the bound for $\mathcal{L}_1$. By an adaptation of the Rayleigh quotient, the proof narrows down to nonexistence of a preimage through $\mathcal{L}_1$ of the ground state which annihilates the quadratic form. In \cite{W}, \cite{DeB} the explicit solution allows to check the condition directly, while \cite{W2} shows nonexistence from the condition on the $\sigma $ derivative of $\| \varphi_\sigma\|_{L^2}$. Following the same strategy, we need to adapt to the low regularity of the monotonous relation between frequency and charge. For this we have the following variational strategy
\begin{lemma}
    Let $a_0$ as in Theorem \ref{min}, and $a>a_0$. If the map $\sigma (a) $ verifies $\bar{D}^+ \sigma (a)< \infty$, then there exist no $(v,\phi) \in \mathcal{M}^{\sigma(a)}$ $\eta, \theta \in H^1$ such that
    \begin{equation}
    \mathcal{L}_1^{(v,\phi)}   \begin{pmatrix}
 \eta \\ \theta \end{pmatrix}=\begin{pmatrix}
 v \\ 0 \end{pmatrix}; \ \ \ \ 
 \label{app4}
\left \langle\mathcal{L}_1^{(v,\phi)}(\eta,\theta)^t , (\eta,\theta)^t\right \rangle  =0; \ \ \ \ (\eta, v)_{L^2}= 0     \end{equation}
\end{lemma}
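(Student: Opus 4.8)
The plan is to argue by contradiction, supposing such a triple $(v,\phi),\eta,\theta$ exists, and to extract from it a contradiction with the finiteness of $\bar D^+\sigma(a)$. The heuristic (going back to \cite{W2}) is that a solution of $\mathcal{L}_1^{(v,\phi)}(\eta,\theta)^t=(v,0)^t$ with the quadratic form vanishing plays the role of a ``derivative of the ground state with respect to $\sigma$'' — morally $(\partial_\sigma v_\sigma,\partial_\sigma\phi_\sigma)$, since differentiating the system \eqref{sigma}-\eqref{sig} in $\sigma$ yields exactly $\mathcal{L}_1^{(v,\phi)}(\partial_\sigma v,\partial_\sigma\phi)^t=-2(v,0)^t$ (up to the harmless constant). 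The vanishing of $\langle\mathcal{L}_1(\eta,\theta)^t,(\eta,\theta)^t\rangle=\langle(v,0)^t,(\eta,\theta)^t\rangle=(v,\eta)_{L^2}$ is automatic here from the orthogonality $(\eta,v)_{L^2}=0$, so the real content is that, along the modulation $\sigma\mapsto(v_\sigma,\phi_\sigma)$, the charge $\|v_\sigma\|_{L^2}^2$ would have zero or infinite one-sided derivative at $a$ — and one wants to show it is actually \emph{negative} or more precisely that $\bar D^+\sigma(a)=\infty$, contradicting the hypothesis.

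The key steps I would carry out are as follows. First, I would use the implicit function theorem setup of Lemma \ref{LemKer}: in a neighbourhood of $(v,\phi)$ the radial solutions of $F(u,\theta)=0$ (the $\sigma$-dependent analogue of $F$, now with $\sigma$ a free parameter) form a smooth finite-dimensional manifold, and one can locally parametrize a branch by $\sigma$, producing a $C^1$ curve $\sigma\mapsto(v_\sigma,\phi_\sigma)$ with $(v_{\sigma_0},\phi_{\sigma_0})=(v,\phi)$ — provided $\mathcal{L}_1^{(v,\phi)}$, restricted to the orthogonal complement of its kernel, is invertible, which it is. Differentiating in $\sigma$ gives $\mathcal{L}_1^{(v,\phi)}(\dot v,\dot\phi)^t=-2(v,0)^t$, so $(\dot v,\dot\phi)$ and $-2(\eta,\theta)$ differ by an element of $\mathrm{Ker}\,\mathcal{L}_1^{(v,\phi)}$; since the kernel contains only translation generators (which are not radial) and the tangent space to $\mathcal{M}^\sigma$, one can choose the branch so that $(\dot v,\dot\phi)=-2(\eta,\theta)$ exactly, after projecting out the $\mathcal{M}^\sigma$-tangent directions. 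Second, I would differentiate the charge: $\frac{d}{d\sigma}\|v_\sigma\|_{L^2}^2 = 2(v,\dot v)_{L^2}=-4(v,\eta)_{L^2}=0$ by the hypothesis $(\eta,v)_{L^2}=0$. Third — and this is where the energy-comparison inequality \eqref{endif} from Lemma \ref{Lema} enters — I would combine this vanishing first-order term with the \emph{strict} monotonicity of $\sigma(a)$ established there: along the branch, $\|v_\sigma\|_{L^2}^2$ is the charge of a \emph{ground state} at frequency $\sigma$ near $\sigma_0$, so $a=\|v\|_{L^2}^2$ would have to be a critical point of the (strictly monotone, hence a.e. differentiable) relation $\sigma\mapsto a(\sigma)$; but a strictly monotone function with a zero derivative at a point, combined with the fact that the relation is genuinely invertible near $(v,\phi)$ by the implicit function theorem, forces the inverse one-sided derivative $\bar D^+\sigma(a)$ to be $+\infty$ — contradicting the hypothesis $\bar D^+\sigma(a)<\infty$. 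A second-order Taylor expansion of the action along the branch, using $\langle S''|_{v,\phi}(\eta,\theta)^t,(\eta,\theta)^t\rangle=0$ and $S'(v,\phi)=0$, quantifies that the charge stays constant to second order, reinforcing the degeneracy.

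The main obstacle I anticipate is the matching between the algebraic object $(\eta,\theta)$ handed to us and the geometric object $(\dot v,\dot\phi)$ coming from the implicit-function-theorem branch: one must account for the kernel of $\mathcal{L}_1^{(v,\phi)}$ (nontrivially, since by Lemma \ref{LemKer} it may contain the radial tangent space to $\mathcal{M}^\sigma$), show the branch can be chosen transverse to $\mathcal{M}^\sigma$, and verify that the modulation used to define $\mathcal{M}^\sigma$ does not secretly change the $L^2$ norm in a way that invalidates the charge computation — here the identity \eqref{lala} and the fact (from the first Lemma of Section \ref{secstab}) that $\mathcal{M}^\sigma\subset S_a$ are essential, since they pin the charge to be \emph{constant} on $\mathcal{M}^\sigma$, so the $\mathcal{M}^\sigma$-tangent part contributes nothing to $\frac{d}{d\sigma}\|v_\sigma\|_{L^2}^2$ anyway. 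A secondary technical point is the low regularity of $\sigma(a)$: one only knows it is strictly monotone with a.e. finite upper Dini derivative, so the ``contradiction with differentiability'' must be phrased purely in terms of $\bar D^+$, using that a strictly increasing function locally $C^1$-parametrizable with vanishing derivative of its inverse at a point has $\bar D^+$ equal to $+\infty$ there.
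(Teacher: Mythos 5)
Your proposal diverges substantially from the paper's argument, and as it stands it has a genuine gap at its core. The branch $\sigma\mapsto(v_\sigma,\phi_\sigma)$ that you build by the implicit function theorem consists only of critical points of the action (radial solutions of \eqref{sigma}-\eqref{sig}), not of \emph{ground states}, i.e.\ minimizers of $E$ over the spheres $S_{a(\sigma)}$. But the function $\sigma(a)$ in the hypothesis is defined exclusively through ground states, so the computation $\frac{d}{d\sigma}\|v_\sigma\|_{L^2}^2=-4(v,\eta)_{L^2}=0$ along your branch says nothing, by itself, about $\bar D^+\sigma(a)$. Even if you could show the branch stays within the ground-state family, the conclusion would still not follow: $\sigma(a)$ is an infimum over frequencies, so for a branch point with charge $a(\sigma)>a$ you only get $\sigma(a(\sigma))\le\sigma$, which bounds the difference quotient of $\sigma(\cdot)$ from \emph{above} by $(\sigma-\sigma_0)/(a(\sigma)-a)$; the blow-up of that upper bound does not force $\bar D^+\sigma(a)=\infty$. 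Moreover, vanishing of the first derivative of the charge at the single point $\sigma_0$ gives no control on the sign or size of $a(\sigma)-a$, so the "strictly monotone inverse with zero derivative" step is not quantitative enough to produce any contradiction.

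The paper's proof supplies exactly the quantitative ingredient your sketch lacks, and it never needs a solution branch. It first shows, by perturbing $(v,\phi)$ \emph{inside} $S_a$ (with $u_\varepsilon=v+\varepsilon\eta-\tilde\varepsilon\frac{\|\eta\|_{L^2}}{\|v\|_{L^2}}v$, $\tilde\varepsilon\sim C\varepsilon^2$, $\theta_\varepsilon=\phi+\varepsilon\theta$) and using minimality of $(v,\phi)$ together with $\mathcal{L}_1(\eta,\theta)^t=(v,0)^t$, that the cubic term $\left\langle S'''|_{v,\phi}(\eta,\theta)^t,(\eta,\theta)^t,(\eta,\theta)^t\right\rangle$ cannot vanish: otherwise $S(u_\varepsilon,\theta_\varepsilon)-S(v,\phi)=-C\varepsilon^3\|v\|_{L^2}^2+O(\varepsilon^4)<0$. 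Then, with the sign of the cubic term fixed, it perturbs \emph{off} the sphere with $v_\varepsilon=v+\frac{\varepsilon}{\|\eta\|_{L^2}}\eta$, so that $\|v_\varepsilon\|_{L^2}^2=a+\varepsilon^2$, and obtains $J_{a+\varepsilon^2}\le J_a-\sigma(a)\frac{\varepsilon^2}{2}-c\varepsilon^3+o(\varepsilon^3)$, while the hypothesis $\bar D^+\sigma(a)<\infty$ combined with the energy-comparison inequality \eqref{endif} gives $J_{a+\varepsilon^2}\ge J_a-\frac{\varepsilon^2}{2}\sigma(a)+O(\varepsilon^4)$; the genuinely cubic gain $-c\varepsilon^3$ against the quartic error is what yields the contradiction. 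If you want to salvage your route, you would have to both (i) prove that the IFT branch consists of ground states near $(v,\phi)$ (which is not known in this setting and is essentially why the paper works with $\mathcal{M}^\sigma$ and energy comparisons instead), and (ii) extract second- or third-order information along the branch playing the role of the paper's $S'''\neq 0$; the first-order computation alone cannot close the argument.
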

\begin{proof}
    Assume by contradiction there exists, for a $(v, \phi) \in \mathcal{M}^{\sigma(a)}$, $\eta, \theta$ as in the Theorem. By energy comparison we infer that the third derivative of $S$ evaluated at $v,\phi $ satisfies
    \begin{equation}
    \label{S'''}
        \left \langle S'''|_{v,\phi} (\eta, \theta)^t, (\eta,\theta)^t, (\eta,\theta)^t\right  \rangle \neq 0
    \end{equation}
   If this were not true for $\varepsilon $ small define
\begin{equation*}
    u_\varepsilon:= v + \eta_\varepsilon; \ \ \ \theta_\varepsilon:= \phi + \varepsilon \theta; \ \ \ \eta_\varepsilon:= \varepsilon  \eta - \tilde{\varepsilon}\frac{\|\eta\|_{L^2}}{\|v\|_{L^2}} v
\end{equation*}
By $(v, \eta)_{L^2}=0$ there exists $\tilde{\varepsilon} = C \varepsilon^2 + o(\varepsilon^2) $ with $C>0$ such that $\|v\|_{L^2}^2= \|u_\varepsilon\|_{L^2}^2$; henceforth
\begin{equation}
    S(u_\varepsilon,\theta_\varepsilon) \ge S(v,\phi)
    \label{contr}
\end{equation}
On the other hand, mimicking \eqref{30} we get
\begin{align*}
   &S(u_\varepsilon, \theta_\varepsilon)- S(v,\phi)   = \frac 1 2 \left \langle \mathcal{L}_1^{(v,\phi)} ( \eta_\varepsilon, \varepsilon\theta) ^t  , ( \eta_\varepsilon, \varepsilon\theta) ^t  \right \rangle+  \frac{\varepsilon^3}6  \left \langle S'''|_{v,\phi} (\eta, \theta)^t, (\eta,\theta)^t, (\eta,\theta)^t\right  \rangle  + O(\varepsilon^4)\\
 & = \frac {\varepsilon^2} 2  \left \langle \mathcal{L}_1^{(v,\phi)} ( \eta, \theta) ^t  , ( \eta, \theta) ^t  \right \rangle - \varepsilon \tilde{\varepsilon}\left \langle \mathcal{L}_1^{(v,\phi)} ( \eta, \theta) ^t  , ( v, 0) ^t  \right \rangle+\frac{\tilde{\varepsilon}^2}{2} \left \langle \mathcal{L}_1^{(v,\phi)} ( v,0) ^t  , ( v,0) ^t  \right \rangle  + O(\varepsilon^4)
\\&= - C\varepsilon^3 \left \langle \mathcal{L}_1^{(v,\phi)} ( \eta, \theta) ^t  , ( v, 0) ^t  \right \rangle + O(\varepsilon^4) = - C \varepsilon^3  \|v\|_{L^2}^2 + O(\varepsilon^4) 
\end{align*}
In the third equality we have used \eqref{app4} and the explicit $\mathcal{L}_1^{(v,\phi)} (v, 0)^t = (0, -4\cos(2\phi)v^2) ^t$, and again \eqref{app4} in the last one. But this contradicts \eqref{contr} for $\varepsilon$ small.\\
Up to a change of sign of $\varepsilon$ in the following argument, we can assume \eqref{S'''} to be strictly negative. Consider now the configuration 
\begin{equation*}
    v_\varepsilon:= v + \frac{\varepsilon} {\|\eta\|_{L^2}}\eta; \ \ \ \phi_\varepsilon:= \phi + \frac{\varepsilon} {\|\eta\|_{L^2}}\theta; 
\end{equation*}
Notice $\|v_\varepsilon\|_{L^2}^2 = a +\varepsilon^2$; modification of the Taylor expansion of the action in the previous calculation for the energy leads to
\begin{equation*}
    J_{a+ \varepsilon^2} \le E(v_\varepsilon, \phi_\varepsilon) = J_a - \sigma(a)\frac{ \varepsilon^2}2 - c \varepsilon^3 + o(\varepsilon^3) 
\end{equation*}
for a positive $c$. On the other hand, by hypothesis on the derivative of $\sigma(a) $ we have $\sigma(a+ \varepsilon^2) \le \sigma (a) + \varepsilon^2 C + o(\varepsilon^2)$, for a finite $C$. But then equation \eqref{endif} reads, for $a_1= a+ \varepsilon^2$, $a_2=a$
\begin{equation*}
    J_{a+\varepsilon^2} \ge J_a - \frac{\varepsilon^2}{2} \sigma(a+\varepsilon^2)\ge J_a - \frac{\varepsilon^2}{2} \sigma(a) + O(\varepsilon^4)
\end{equation*}
and this reaches the contradiction. 
\end{proof}
\begin{lemma} 
If 
    \begin{equation*}
\inf_{\substack{(v,\phi) \in \mathcal{M}^\sigma\\ \eta , \theta \mathrm{\ verifying\ } \eqref{cond4}- \eqref{cond3}}}         \left \langle\mathcal{L}_1^{(v,\phi)}(\eta,\theta)^t , (\eta,\theta)^t\right \rangle \le 0 
    \end{equation*}
\label{ult}
then there exist $(v,\phi) \in \mathcal{M}^\sigma$, $\eta, \theta \in H^1 $ which satisfy \eqref{app4}. 
\end{lemma}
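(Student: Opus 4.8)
The plan is to turn the constrained infimum into an attained minimum and then read \eqref{app4} off the Euler--Lagrange equation. Write $Q_{(v,\phi)}(\eta,\theta):=\langle\mathcal{L}_1^{(v,\phi)}(\eta,\theta)^t,(\eta,\theta)^t\rangle$ and let $V_{(v,\phi)}$ be the closed subspace of $H^1(\R^2;\R)\times H^1(\R^2;\R)$ cut out by the constraints \eqref{cond4}--\eqref{cond3}. By Remark~\ref{para2} one has $Q_{(v,\phi)}\ge 0$ on $V_{(v,\phi)}$ for every $(v,\phi)\in\mathcal{M}^\sigma$, so the hypothesis forces the infimum of the Rayleigh quotient $Q_{(v,\phi)}(\eta,\theta)/\|(\eta,\theta)\|_{L^2\times L^2}^{2}$, taken over $(v,\phi)\in\mathcal{M}^\sigma$ and $0\ne(\eta,\theta)\in V_{(v,\phi)}$, to equal $0$.

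\emph{Existence of a minimiser.} Take $(v_n,\phi_n)\in\mathcal{M}^\sigma$ and $(\eta_n,\theta_n)\in V_{(v_n,\phi_n)}$ with $\|(\eta_n,\theta_n)\|_{L^2\times L^2}=1$ and $Q_{(v_n,\phi_n)}(\eta_n,\theta_n)\to0$. Since $v_n,\phi_n\to0$ at infinity uniformly in $n$ (exponential decay, Proposition~\ref{decay}, together with the $H^2\times H^2$--compactness of $\mathcal{M}^\sigma$), the matrix potential in $\mathcal{L}_1^{(v_n,\phi_n)}$ tends uniformly to $\mathrm{diag}(2\sigma,2q)$, with $2\sigma>0$ by \eqref{peg} and $q>0$; hence $Q_{(v_n,\phi_n)}(\eta,\theta)=\|\nabla\eta\|_{L^2}^2+\lambda\|\nabla\theta\|_{L^2}^2+2\sigma\|\eta\|_{L^2}^2+2q\|\theta\|_{L^2}^2+R_n(\eta,\theta)$ with $R_n$ carrying only decaying coefficients, which makes $(\eta_n,\theta_n)$ bounded in $H^1\times H^1$. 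After relabelling, $(v_n,\phi_n)\to(v,\phi)\in\mathcal{M}^\sigma$ in $H^2\times H^2$ and $(\eta_n,\theta_n)\rightharpoonup(\eta_0,\theta_0)$ in $H^1\times H^1$. Splitting a large ball (Rellich) from its complement (small $L^\infty$ coefficients) gives $R_n(\eta_n,\theta_n)\to R_\infty(\eta_0,\theta_0)$, the limit being built from $(v,\phi)$; were $(\eta_0,\theta_0)=0$ this would force $0=\lim Q_{(v_n,\phi_n)}(\eta_n,\theta_n)\ge\min\{2\sigma,2q\}>0$, impossible, so $(\eta_0,\theta_0)\ne0$. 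By weak lower semicontinuity $0\le Q_{(v,\phi)}(\eta_0,\theta_0)\le\liminf Q_{(v_n,\phi_n)}(\eta_n,\theta_n)=0$, and the constraints \eqref{cond4}--\eqref{cond3} pass to the limit: the $L^2$-- and $H^1$--products against $v_n$, against $(\partial_l v_n,\partial_l\phi_n)$, and against an orthonormal basis of $\mathrm{Ker}\,\mathcal{L}_1^{(v_n,\phi_n)}\cap(H^1_{rad})^2$ — which converges to one for $(v,\phi)$ by Corollary~\ref{corM} — all converge by weak $\times$ strong. After rescaling in $L^2\times L^2$, $(\eta_0,\theta_0)$ thus minimises $Q_{(v,\phi)}$ over $V_{(v,\phi)}$ with minimal value $0$.

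\emph{Euler--Lagrange equation.} As the minimal value is $0$, testing the quotient along directions in the subspace $V_{(v,\phi)}$ gives $\langle\mathcal{L}_1^{(v,\phi)}(\eta_0,\theta_0)^t,\psi\rangle=0$ for all $\psi\in V_{(v,\phi)}$, i.e. $\mathcal{L}_1^{(v,\phi)}(\eta_0,\theta_0)^t$ lies in the annihilator of $V_{(v,\phi)}$:
\[
\mathcal{L}_1^{(v,\phi)}(\eta_0,\theta_0)^t=\nu\,(v,0)^t+\sum_{l=1}^2\zeta_l(I-\Delta)(\partial_l v,\partial_l\phi)^t+\sum_m\xi_m(I-\Delta)(k_m,h_m)^t .
\]
Pairing with $(\partial_j v,\partial_j\phi)^t$ and with $(k_n,h_n)^t$ — both in $\mathrm{Ker}\,\mathcal{L}_1^{(v,\phi)}$ by Lemma~\ref{LemKer}, so the left-hand side vanishes by symmetry of the bilinear form — and using $\int v\,\partial_j v=0$, the mutual $L^2$-- and $H^1$--orthogonality of the translation generators among themselves and with the radial kernel (angular integration), and crucially $\int v\,k_n=0$ (since $(k_n,h_n)$ is tangent to $\mathcal{M}^\sigma\subset S_a$, along which $\|v\|_{L^2}$ is constant), one obtains $\zeta_j=0$ and $\xi_n=0$ for all $j,n$. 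Hence $\mathcal{L}_1^{(v,\phi)}(\eta_0,\theta_0)^t=\nu\,(v,0)^t$.

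\emph{Conclusion.} If $\nu=0$ then $(\eta_0,\theta_0)\in\mathrm{Ker}\,\mathcal{L}_1^{(v,\phi)}$, which by Lemma~\ref{LemKer} is spanned by the translation generators and the $(k_m,h_m)$; but $(\eta_0,\theta_0)\in V_{(v,\phi)}$ is $H^1$-orthogonal to all of them, so $(\eta_0,\theta_0)=0$, contradicting $\|(\eta_0,\theta_0)\|_{L^2\times L^2}=1$. Thus $\nu\ne0$, and $(\eta,\theta):=\nu^{-1}(\eta_0,\theta_0)$ satisfies $\mathcal{L}_1^{(v,\phi)}(\eta,\theta)^t=(v,0)^t$, $\langle\mathcal{L}_1^{(v,\phi)}(\eta,\theta)^t,(\eta,\theta)^t\rangle=\nu^{-2}Q_{(v,\phi)}(\eta_0,\theta_0)=0$ and $(\eta,v)_{L^2}=\nu^{-1}(\eta_0,v)_{L^2}=0$, which is precisely \eqref{app4}. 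The main obstacle is the compactness step — ruling out escape of mass to infinity — which relies on the uniform exponential decay of the ground states in the compact manifold $\mathcal{M}^\sigma$ (Proposition~\ref{decay}) and on the strict positivity of the bottom $\min\{2\sigma,2q\}$ of the essential spectrum of $\mathcal{L}_1^{(v,\phi)}$.
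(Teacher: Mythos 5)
Your proposal is correct and follows essentially the same route as the paper: a contradiction/limit argument using the $H^2$-compactness of $\mathcal{M}^\sigma$, weak convergence of the normalized minimizing pairs, passage of the decaying potential terms and of the constraints to the limit, nonvanishing of the limit via the positive constant coming from $2\sigma>0$, a dual (Lagrange-multiplier) representation of $\mathcal{L}_1^{(v,\phi)}(\eta_0,\theta_0)^t$, elimination of the kernel-direction multipliers through Lemma~\ref{LemKer}, the angular orthogonalities and $\int v\,k_m=0$, and finally $\nu\neq0$ plus rescaling to obtain \eqref{app4}. The only deviations are cosmetic: you normalize in $L^2\times L^2$ instead of $H^1\times H^1$ and avoid the paper's extra multiplier $\alpha$ by exploiting nonnegativity of the form on the linear constraint subspace, while your appeal to uniform exponential decay (Proposition~\ref{decay}) is heavier than needed — strong $H^2$ (hence $L^\infty$) convergence of $(v_n,\phi_n)$ already gives the uniform smallness of the coefficients outside a large ball — but none of this affects the substance.
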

\begin{proof}
    By Remark \ref{para2} and $(\eta, v)_{L^2}= 0$, $\mathcal{L}_1 \ge 0$. Assume the infimum is $0$ and let $(v_j, \phi_j) \in \mathcal M ^\sigma$, $\eta_j, \theta_j\in H^1$ be such that
    \begin{align}
        0 &= \lim_{j\to \infty } \left \langle\mathcal{L}_1^{(v_j, \phi_j)
        }(\eta_j,\theta_j)^t , (\eta_j,\theta_j)^t\right \rangle; \ \  \|  \eta_j\|_{H^1}^2 + \|\theta_j\|_{H^1}^2 =1; 
        \notag\\
        \  \ &  \left((\eta_j, \theta_j)^t,(\partial_l v_j, \partial_l \phi_j)^t \right )_{H^1\times H^1} = 0 = ( \eta_j, v_j)_{L^2}= \left((\eta_j, \theta_j)^t,(k_m^j, h_m^j)^t \right )_{H^1\times H^1} 
        \label{app}
    \end{align}
with $(k_m^j, h_m^j)$ o.n. basis of Ker$\mathcal{L}_1^{(v_j,\phi_j)} \cap \left ( H^1_{rad} \times H^1_{rad} \right )$.
\\$(v_j,\phi_j) \to (v,\phi) \in \mathcal{M}^\sigma$ strongly in $H^2$; by Corollary \ref{corM}, up to subsequences, $ (k_m^j, h_m^j) \to (k_m, h_m)$ o.n. basis of Ker$\mathcal{L}_1^{(v,\phi)} \cap \left ( H^1_{rad} \times H^1_{rad} \right )$ strongly and $\eta_j, \theta_j \to \eta, \theta$ weakly in $H^1$. Orthogonality conditions in \eqref{app} are preserved in the limit, and 
\begin{equation}
\label{y}
    \lim_{j\to \infty} \left ( 2\int \eta_j^2 \sin (2\phi_j) + 4\int v_j\cos(2\phi_j) \eta_j \theta_j\right )= 2 \int \eta^2 \sin (2\phi) + 4\int v\cos(2\phi) \eta \theta
\end{equation}
Hence it follows
\begin{align*}
    0&= \lim_{j\to \infty } \left \langle\mathcal{L}_1^{(v_j,\phi_j)}(\eta_j,\theta_j)^t , (\eta_j,\theta_j)^t\right \rangle \ge 
    \\ &\ge \min \left \{ 2\sigma, 1\right \} + \liminf_{j\to \infty}L_2(\theta_j,\theta_j)- \limsup_{j\to \infty} \left ( 2\int \eta_j^2 \sin (2\phi_j) + 4\int v_j\cos(2\phi_j) \eta_j \theta_j\right ) \ge 
    \\ &\ge \min \left \{ 2\sigma, 1\right \} - 2 \int \eta^2 \sin (2\phi) - 4\int v\cos(2\phi) \eta \theta
\end{align*}
In particular we have $\eta \neq 0  $. Combining Remark \ref{para2}, weak convergence and equation \eqref{y}, $\eta, \theta$ satisfy 
\begin{equation*}
    0 \le     \left \langle\mathcal{L}_1^{(v,\phi)}(\eta,\theta)^t , (\eta,\theta)^t\right \rangle \le  \lim_{j\to \infty } \left \langle\mathcal{L}_1^{(v_j,\phi_j)}(\eta_j,\theta_j)^t , (\eta_j,\theta_j)^t\right \rangle =0  
\end{equation*}
Up to a proper rescaling we have 
 \begin{align}
        0 &=  \left \langle\mathcal{L}_1^{(v, \phi)
        }(\eta,\theta)^t , (\eta,\theta)^t\right \rangle; \ \  \|  \eta\|_{H^1}^2 + \|\theta\|_{H^1}^2 =1; 
        \notag\\
        \  \ &  \left((\eta, \theta)^t,(\partial_l v, \partial_l \phi)^t \right )_{H^1\times H^1} = 0 = ( \eta_j, v)_{L^2}= \left((\eta, \theta)^t,(k_m, h_m)^t \right )_{H^1\times H^1} 
        \label{app2}
    \end{align}
There exists hence Lagrange multipliers $\alpha, \beta_l, \gamma_m, \delta \in \R$ such that
\begin{equation}
    \mathcal{L}_1   \begin{pmatrix}
 \eta \\ \theta \end{pmatrix}= \alpha \begin{pmatrix}
 \eta -\Delta \eta\\ \theta-\Delta \theta \end{pmatrix} + \beta_l \begin{pmatrix}
 \partial_l v -\Delta \partial_l v  \\ \partial_l \phi -\Delta \partial_l \phi\end{pmatrix} + \gamma_m \begin{pmatrix}
 k_m -\Delta k_m  \\ h_m -\Delta h_m\end{pmatrix}+ \delta \begin{pmatrix}
 v \\ 0 \end{pmatrix}
 \label{can}
\end{equation}
Following \cite{W}, Proposition 2.9, or \cite{DeB}, Lemma 2.1, we multiply equation \eqref{can} by $(\eta,\theta)^t$; integrating by parts, conditions in \eqref{app2} imply $\alpha=0$. By Lemma \ref{LemKer}, the same argument gets, with test function $\beta_l(\partial_l v, \partial_l \phi)^t$, $\beta_1=\beta_2 =0$. Similarly $\gamma_m$ is $0$ for any $m$; the only additional difficulty is noticing that $(k_m, h_m)$ is orthogonal to $(v,0)$ since $\mathcal{M}^\sigma \subset S_a$ by Corollary \ref{corM}. 
\\Finally, $\delta\neq 0 $, as otherwise $(\eta, \theta ) \in\mathrm{Ker \ }\mathcal{L}_1^{(v,\phi)} $.
\end{proof}
At this point Proposition \ref{S''} follows by Lemmas \ref{prim}-\ref{ult}.
\subsection{Variational Stability}
We recall, without proof, the classical Concentration-Compactness Lemma (see \cite{Lio}) and prove a Lemma characterizing the angle energy in the splitting scenario. With these tools we can prove Proposition \ref{teost2}.
\begin{lemma}[Concentration-Compactness]
\label{concomp}
    Consider a sequence $u_j\in H^1 \left ( \R^2 \right ) $ such that $\|u_j\|_{L^2}^2 = a>0$. Then there exists a subsequence not relabelled, which verifies one of the following conditions: 
    \begin{itemize}
        \item $\mathrm{Compactness}$ For a certain sequence $x_j \in \R^2$, and $u \in H^1 \left ( \R^2 \right ) $, $u_j( \cdot -x_j)  $ converges strongly to $u$ in $L^p $ for any $p\in [2, \infty)$
        \item $\mathrm {Vanishing}$ $u_j$ converges strongly to $0$ in $L^p $ for any $p\in (2,\infty)$
        \item $\mathrm {Splitting}$ There exists $0<b<a$, such that for any $\varepsilon>0$ there exist two sequences $ u_j^1. u_j^2 \in H^1 \left ( \R^2 \right )$ with compact supports and satisfying, for $j$ sufficiently large
        \begin{align*}
   & \left \|u_j^1 \right \|_{H^1} +    \left \|u_j^2 \right \|_{H^1} \le 4 \sup_j  \left \|u_j \right \|_{H^1}; \  \  \left \|u_j^1+ u_j^2 - u_j \right \|_{L^2} \le \varepsilon ; \\ & \left \| \nabla u_j^1 \right \|_{L^2} +    \left \|\nabla u_j^1 \right \|_{L^2} \le  \left \|\nabla u_j \right \|_{L^2 } + \varepsilon 
    \\ &\left |\int   \left | u_j^1 \right | ^2 \,dx - b \right | \le \varepsilon; \ \ \ \  \left |\int \left | u_j^2 \right | ^2 \,dx +b-a \right | \le \varepsilon; \ \ \ \ \ \mathrm{dist (supp} \ u^1_j, \mathrm{supp\ }u^2_j) \ge \frac2 \varepsilon \notag
        \end{align*}
\end{itemize}
\end{lemma}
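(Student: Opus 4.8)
This is the classical Lions concentration–compactness dichotomy, and I would run the standard argument based on the Lévy concentration functions of the mass densities $m_j:=|u_j|^2$. Throughout I assume $(u_j)$ is bounded in $H^1(\R^2)$, as the conclusions implicitly require. For $R>0$ set
\[
Q_j(R):=\sup_{y\in\R^2}\int_{B_R(y)}|u_j|^2\,dx,
\]
which is nondecreasing in $R$ with values in $[0,a]$. By Helly's selection theorem there is a subsequence (not relabelled) along which $Q_j\to Q$ pointwise on $(0,\infty)$, with $Q$ nondecreasing and $0\le Q\le a$; put $b:=\lim_{R\to\infty}Q(R)\in[0,a]$. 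The three cases $b=0$, $b=a$, $0<b<a$ will produce Vanishing, Compactness and Splitting respectively.

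\emph{Vanishing} ($b=0$). Then $\sup_y\int_{B_R(y)}|u_j|^2\to0$ for every fixed $R$. I would invoke the vanishing lemma: cover $\R^2$ by unit balls of finite overlap, and on each ball use the Gagliardo–Nirenberg bound $\|u\|_{L^p(B)}^p\le C\|u\|_{L^2(B)}^2\|u\|_{H^1(B)}^{p-2}$, valid in dimension two for every $p\in(2,\infty)$. Summing over the cover (interpolating $L^p$ with $L^2$ and $L^4$ when $2<p<4$) gives $\|u_j\|_{L^p(\R^2)}^p\le C\big(\sup_y\|u_j\|_{L^2(B_1(y))}^2\big)\sup_j\|u_j\|_{H^1}^{p-2}\to0$, which is the Vanishing alternative.

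\emph{Compactness} ($b=a$). Fix $\varepsilon\in(0,a/2)$ and choose $R_\varepsilon$ with $Q(R_\varepsilon)>a-\varepsilon$, so that for $j$ large there is $y_j^\varepsilon$ with $\int_{B_{R_\varepsilon}(y_j^\varepsilon)}|u_j|^2>a-\varepsilon$. Any two balls carrying more than $a-\varepsilon>a/2$ of the total mass $a$ must intersect, so along a nested sequence $\varepsilon_k\downarrow0$ the corresponding centres stay within bounded distance; a diagonal extraction then yields a single translation sequence $x_j$ with $\int_{B_{R_{\varepsilon_k}}(0)}|v_j|^2>a-\varepsilon_k$ for all $k$ and all large $j$, where $v_j:=u_j(\cdot-x_j)$. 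This is uniform tightness of $(|v_j|^2)$; together with $v_j\rightharpoonup v$ in $H^1$ and $v_j\to v$ in $L^2_{\mathrm{loc}}$ (Rellich), one gets $v_j\to v$ strongly in $L^2(\R^2)$, and interpolating with the $H^1$ bound, strongly in $L^p(\R^2)$ for every $p\in[2,\infty)$ — the Compactness alternative.

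\emph{Splitting} ($0<b<a$), which I expect to be the main obstacle. Fix $\varepsilon>0$, pick $R$ with $Q(R)>b-\varepsilon$, and — using $Q_j(R')\to Q(R')\le b$ for every $R'$ together with monotonicity and a pigeonhole over a long range of dyadic scales — find, for $j$ large, a centre $y_j$ and a radius $T_j\to\infty$ with $\int_{B_R(y_j)}|u_j|^2>b-\varepsilon$ and $\int_{B_{2T_j}(y_j)\setminus B_{T_j}(y_j)}|u_j|^2<\varepsilon$. Let $u_j^1:=\xi_j u_j$ and $u_j^2:=(1-\eta_j)u_j$ with smooth cutoffs $\xi_j,\eta_j$ supported in $B_{T_j}(y_j)$ and outside $B_{2T_j}(y_j)$ respectively, transitioning inside the small-mass annulus. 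Then $\mathrm{supp}\,u_j^1$ and $\mathrm{supp}\,u_j^2$ are at distance $\ge T_j$, which may be arranged $\ge2/\varepsilon$; $\|u_j^1+u_j^2-u_j\|_{L^2}^2\le\int_{\text{annulus}}|u_j|^2\le\varepsilon$; $\left|\int_{\R^2}|u_j^1|^2\,dx-b\right|\lesssim\varepsilon$ and $\left|\int_{\R^2}|u_j^2|^2\,dx-(a-b)\right|\lesssim\varepsilon$; and, since $|\nabla\xi_j|,|\nabla\eta_j|=O(1/T_j)$ are supported on a set of $|u_j|^2$-mass $\le\varepsilon$, the cutoff-derivative and cross terms are controlled by Cauchy–Schwarz to give $\|\nabla u_j^1\|_{L^2}+\|\nabla u_j^2\|_{L^2}\le\|\nabla u_j\|_{L^2}+C\varepsilon$. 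The delicate points here are the selection of the transition scale $T_j$ so that the annulus genuinely carries little mass for \emph{all} large $j$ (exploiting the gap $a-b>0$) and the bookkeeping that keeps the gradient increase within the prescribed error; the diagonal-translation argument in the Compactness case is the second, milder, technicality.
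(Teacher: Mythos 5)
The paper does not prove this lemma at all: it is recalled ``without proof'' as the classical concentration--compactness dichotomy, with a citation to Lions, so there is no internal argument to compare against. Your sketch is the standard Lions proof (Lévy concentration function $Q_j$, Helly extraction, vanishing via a covering/Gagliardo--Nirenberg lemma, compactness via tightness plus Rellich and interpolation, dichotomy via cutoffs in a low-mass annulus selected using $Q_j\to Q\le b$), and it is essentially correct, including your observation that $H^1$-boundedness must be assumed (it is implicit in the statement and holds for the minimizing sequences to which the paper applies it). Two small points to tighten. First, in the splitting case your definition of $u_j^2:=(1-\eta_j)u_j$ with $\eta_j$ ``supported outside $B_{2T_j}(y_j)$'' is garbled as written: then $1-\eta_j\equiv 1$ on $B_{2T_j}(y_j)$, so $u_j^2$ overlaps $u_j^1$; you clearly intend $\eta_j\equiv 1$ on $B_{T_j}(y_j)$, supported in $B_{2T_j}(y_j)$, with the transition in the small-mass annulus, so that $u_j^2$ lives outside $B_{T_j}(y_j)$ and the stated support separation holds. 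Second, the version stated in the paper requires \emph{both} pieces to have compact support, whereas your $u_j^2$ is supported in the complement of a ball; this is fixed by an extra truncation of $u_j^2$ at a radius $M_j$ chosen (for each fixed $j$) so large that the tail $\int_{|x-y_j|>M_j}|u_j|^2$ is negligible and the truncation cutoff has arbitrarily small gradient, which perturbs the mass and gradient estimates by less than $\varepsilon$.
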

\begin{lemma}
For any $u_1, u_2\in H^1$ having compact support, with $\mathrm{dist (supp} \ u^1_j, \mathrm{supp\ }u^2_j)=  \alpha$ we have
\begin{equation*}
    E(u_1 +u_2, \Theta(u_1+u_2)) \ge E(u_1, \Theta(u_1)) + E(u_2, \Theta(u_2)) + C_\alpha
\end{equation*}
with $C_\alpha \to 0 $ for $\alpha \to \infty$, uniformly for $u_1, u_2$ bounded in $H^1$. 
\end{lemma}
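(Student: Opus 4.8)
My plan rests on the observation that the energy splits cleanly once the director is optimised: for every $u\in H^1(\R^2)$,
\begin{equation*}
E(u,\Theta(u)) = \tfrac14\|\nabla u\|_{L^2}^2 + \tfrac14 F_u(\Theta(u)) = \tfrac14\|\nabla u\|_{L^2}^2 + \tfrac14\,m(u),\qquad m(u):=\min_{\theta\in H^1}F_u(\theta),
\end{equation*}
with $F_u$ the functional of Lemma~\ref{minan}. Since $\mathrm{supp}\,u_1$ and $\mathrm{supp}\,u_2$ are disjoint, $|\nabla(u_1+u_2)|^2=|\nabla u_1|^2+|\nabla u_2|^2$ a.e., so the gradient part of $E$ is exactly additive, and the statement reduces to an almost-superadditivity of $m$: it suffices to show $m(u_1+u_2)\ge m(u_1)+m(u_2)-C/\alpha$ with a constant $C$ depending only on an $H^1$-bound for $u_1,u_2$.

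First I would build competitors for $m(u_1)$ and $m(u_2)$ by localising the minimiser $\theta^*:=\Theta(u_1+u_2)$. Writing $K_i:=\mathrm{supp}\,u_i$, take $\chi_i(x):=\varphi(\mathrm{dist}(x,K_i))$ for a fixed profile $\varphi$ with $\varphi\equiv1$ on $[0,\alpha/4]$, $\varphi\equiv0$ on $[\alpha/2,\infty)$, $0\le\varphi\le1$ and $|\varphi'|\le C/\alpha$; these are Lipschitz with bounded support. Because $\mathrm{dist}(K_1,K_2)=\alpha$, the open sets $\{\mathrm{dist}(\cdot,K_i)<\alpha/2\}$ are disjoint, so at each point at most one $\chi_i$ is nonzero and $\chi_1^2+\chi_2^2\le1$. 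Then $\theta_i:=\chi_i\theta^*\in H^1$ satisfies $0\le\theta_i\le\pi/4$, hence $m(u_i)\le F_{u_i}(\theta_i)$, and I would compare $F_{u_1}(\theta_1)+F_{u_2}(\theta_2)$ with $F_{u_1+u_2}(\theta^*)=m(u_1+u_2)$ term by term. The potential term is exact, since $\theta_i\equiv\theta^*$ on $K_i$ forces $-2\sum_i\int|u_i|^2\sin2\theta_i=-2\int|u_1+u_2|^2\sin2\theta^*$. For the $q$-term, monotonicity of $t\mapsto1-\cos t$ on $[0,\pi]$ together with $0\le\chi_i\theta^*\le\theta^*\le\pi/4$ gives $1-\cos2\theta_i\le1-\cos2\theta^*$ pointwise, and disjointness of $\mathrm{supp}\,\theta_1$ and $\mathrm{supp}\,\theta_2$ yields $\sum_i q\int(1-\cos2\theta_i)\le q\int(1-\cos2\theta^*)$. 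Finally, expanding $\nabla\theta_i=\chi_i\nabla\theta^*+\theta^*\nabla\chi_i$ gives $\sum_i\int|\nabla\theta_i|^2=\int(\chi_1^2+\chi_2^2)|\nabla\theta^*|^2+R_\alpha\le\int|\nabla\theta^*|^2+R_\alpha$, where $R_\alpha$ gathers the $\nabla\chi_i$-contributions and is controlled by $C\alpha^{-1}\|\theta^*\|_{H^1}^2$ through Cauchy--Schwarz and $|\nabla\chi_i|\le C/\alpha$.

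Adding these estimates gives $m(u_1)+m(u_2)\le m(u_1+u_2)+C\alpha^{-1}\|\theta^*\|_{H^1}^2$, hence
\begin{equation*}
E(u_1+u_2,\Theta(u_1+u_2))\ \ge\ E(u_1,\Theta(u_1))+E(u_2,\Theta(u_2))-\tfrac{C}{4\alpha}\|\theta^*\|_{H^1}^2,
\end{equation*}
so one may take $C_\alpha:=-\tfrac{C}{4\alpha}\sup\|\theta^*\|_{H^1}^2$. The one point needing care is uniformity: the remainder must depend on $u_1,u_2$ only through their $H^1$ norms. This is exactly what Lemma~\ref{minan} provides, via $\|\theta^*\|_{H^1}\le\|\theta^*\|_{H^2}\le C\|u_1+u_2\|_{L^4}$ together with $H^1(\R^2)\hookrightarrow L^4(\R^2)$; no decay estimate on $\Theta$ and no analysis of how the tails of $\Theta(u_1)$ and $\Theta(u_2)$ overlap is required, which is precisely why the localisation argument is the right tool here and is the only (mild) obstacle I anticipate.
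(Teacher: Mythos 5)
Your argument is correct, and it rests on the same two pillars as the paper's proof: the variational characterization of $\Theta$ from Lemma \ref{minan} (so that cutting off the combined angle produces admissible competitors for the separate problems) and the uniform bound \eqref{0an} on $\Theta(u_1+u_2)$. The difference is in how the cutoff error is controlled, and in the logical packaging. You spread the transition of $\chi_i$ over a region of width comparable to $\alpha$, so $|\nabla\chi_i|\le C/\alpha$ and the error is estimated against the \emph{global} norm $\|\Theta(u_1+u_2)\|_{H^1}^2$, which \eqref{0an} plus the embedding $H^1(\R^2)\hookrightarrow L^4(\R^2)$ bounds uniformly on $H^1$-bounded sets; this gives a direct proof with an explicit rate $C_\alpha=O(1/\alpha)$. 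The paper instead argues by contradiction: it keeps a cutoff transitioning on an annulus of \emph{unit} width, but chooses the radius $R_n<\alpha_n/2$ (a pigeonhole over the $\sim\alpha_n$ available unit annuli, using $\|\theta_n\|_{H^1}\le C$) so that the \emph{local} norm $\|\theta_n\|_{H^1(B_n)}$ tends to $0$, and then all error terms are controlled by that local quantity. Your version buys a quantitative, non-contradiction statement (and, as you note, only positivity of the distance between the supports is really used); the paper's version buys a shorter estimate in which every correction is visibly supported in a single thin annulus, at the price of being purely qualitative. Your reduction $E(u,\Theta(u))=\tfrac14\|\nabla u\|_{L^2}^2+\tfrac14\min_\theta F_u(\theta)$, which turns the claim into almost-superadditivity of $u\mapsto\min F_u$, is a clean reformulation of the paper's observation that the discrepancy reduces to the $E^+$ terms; the two are equivalent.
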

\begin{proof}
By contradiction, assume there exist $\alpha_n \to  \infty$, $\tau<0$, $u_1^n, u_2^n$ bounded in $H^1$ and verifying the hypothesis such that for all $n$
    \begin{equation*}
    E(u_1^n +u_2^n,  \theta_n )  -E(u_1^n,\varphi_1^n) - E(u_2^n,  \varphi_2^n)  < \tau 
\end{equation*}
for any sequences $\varphi_1^n, \varphi_2^n$, where $\theta_n:= \Theta\left ( u_1^n +u_2^n\right)$.\\
By \eqref{0an} $\| \theta_n\|_{H^1}^2 \le C$ for any $n.$ There exists a sequence of rays $R_n<\alpha_n/2$ such that over $B_n := B_{R_n+1}\left ( \mathrm{ \ supp \ } u_1^n\right ) \setminus B_{R_n}\left ( \mathrm{ \ supp \ } u_1^n\right )$
\begin{equation*}
    \| \theta_n  \|_{H^1(B_n)} \to 0 
\end{equation*}
where $B_R(K)$ is the ball of radius $R$ of the set $K$. For $\chi_n $ a smooth cut off with $\chi_n\equiv 1$ over $B_{R_n }\left ( \mathrm{ \ supp \ } u_1^n\right ) $,   $\chi_n\equiv 0$ over $B_{R_n+1}^C\left ( \mathrm{ \ supp \ } u_1^n\right ) $ we reach the contradiction by 
\begin{align*}
    &E \left (u_1^n + u_2^n,\theta_n \right ) - E \left (u_1^n , \chi_n \theta_n \right ) -  E \left ( u_2^n, (1-\chi_n) \theta_n \right ) =\\ &=  E^+(\theta_n) - E^+( \chi_n \theta_n) - E^+((1-\chi_n) \theta_n)  \ge- C \|\theta\|_{H^1(B_n)}
\end{align*}
\end{proof}
\begin{proof}[Proof of Proposition \ref{teost2}]
We prove that any minimizing sequence converges to a ground state. Orbital stability in the sense of Definition \ref{defst2} then follows by a classical argument of \cite{sss}.\\
Let $v_n, \phi_n$ be a minimizing sequence over $S_a$. 
The vanishing scenario of Lemma \ref{concomp} cannot happen: if $v_n \to 0$ strongly in $L^4$ we have $\int |v_n|^2 \sin(2\phi_n) \to 0$, but this implies $\liminf E(v_n, \phi_n) \ge 0$, in contradiction with the minimality assumption.
\\We turn now to the splitting scenario for two sequences $v_n^1, v_n^2$. Applying the previous Lemma we get 
\begin{align}
\label{sg}
   \notag E(v_n, \phi_n) &\ge E \left (v_n^1 + v_n^2, \phi_n \right ) + C\varepsilon \ge E \left (v_n^1 + v_n^2, \Theta ( v_n^1 + v_n^2 ) \right ) + C\varepsilon\\ &\ge E \left (v_n^1, \Theta ( v_n^1 ) \right ) +E \left (v_n^2, \Theta ( v_n^2 ) \right ) + C\varepsilon + C_{\frac{1} \varepsilon} \ge J_{b-a}+ J_b + C\varepsilon+ C_{\frac{1} \varepsilon} 
\end{align}
The last inequality follows by a normalization operation, as in \cite{aaa}. The minimal energy $J_a$ is strictly sub additive with respect to $a$: to this end it is enough to iterate the inequality, for $(v_a, \phi_a)$ a ground state over $S_a$ and $b>a$: 
\begin{equation*}
 J_b \le E\left ( \sqrt{\frac{b}{a}} v_a , \Theta    \left (\sqrt{\frac{b}{a}} v_a\right ) \right ) < E\left ( \sqrt{\frac{b}{a}} v_a , \phi_a \right ) \le \frac{b}a J_a 
\end{equation*}
But for $\varepsilon \to 0$ \eqref{sg} contradicts strict subadditivity.
\\Hence $v_n$ must converge, up to translation, to $v$ strongly in $L^p$ for $p>2$. Weak convergence in $H^1$ of $\phi_n$ is sufficient for convergence of the integral $\int |v_n|^2\sin (2\phi_n)$. Strong convergence to a ground state then follows the same step as in the proof of Theorem \ref{min} in \cite{Arg}. 
\end{proof}
\section{Existence for any $0<\sigma<1$}\label{secexi}
In this section we prove Theorem \ref{teosig}: we lay down the variational problem, define the Nehari manifold and prove the existence of the minimizer.\\ In the definition of a modified energy $E_\sigma$, we are naturally directed by the request that a stationary point for the energy is a stationary solution of the problem \eqref{sigma}-\eqref{sig}. For the construction of the Nehari manifold, the idea is to look for a solution of $E_\sigma'(u)=0$ between the functions that satisfy $E'_\sigma(u)(u)=0 $. This will be the equation describing the constraint.\bigskip\\
In the presence of favourable growth and smallness condition for the functional $E_\sigma$, as well as some regularity, the Nehari manifold is a convenient instrument to prove the existence of critical points. The interested reader is directed to the user friendly notes \cite{Nene} for a wider presentation of the argument.\\Regarding our problem, the regularity of $E_\sigma$ can be proved with a bit of effort. The growth condition is more critical, as the functional does not match the standard ones present in the literature. Still, we can recover some weak form of compactness by careful energetic comparisons around the minimal value of the constrained problem.\bigskip\\
We fix hereafter a value $0<\sigma <1$. We define the modified Energy as 
\begin{equation}
    \label{enemodi}E_\sigma: H^1(\R^2; \R) \to \R; \ \ \ \ E_\sigma(u) := E^-(u,\Theta(u)) + \frac{\sigma}{2} \|u\|_{L^2}^2  + E^+( \Theta(u)) 
\end{equation}
For the minimization problem, we are restricting to real valued and radially symmetric functions. In particular the second request seems quite important, as we need  in several steps the compact embedding $H^1_{rad} \hookrightarrow L^p$ for $2<p<\infty$. Recovering radial symmetry starting with functions in $H^1$ through rearrangements represents a non trivial complication, hence we prefer to consider the variational problem in the simpler setting $H^1_{rad}$.\bigskip\\ Moreover, in the definition of the energy we are absorbing the dependence on the $\theta$ variable in  $\Theta(u)$, the minimizing angle by Lemma \ref{minan}. We recall that for $u\in H^1_{rad}$ $\Theta(u)$ is radial. \\This choice actually contributes to get the desired compactness, which is the most delicate part, but comes at the cost of some additional technicalities linked to the implicit map $\Theta$ and its low regularity as a map from $H^1 \to H^1$.\\In particular, we need to spend some calculations to prove the explicit formula of the derivative of the energy. If $\Theta$ were $C^1$, this would have come directly from
\begin{equation*}
    \frac{d}{du} E_\sigma (u) = \frac{\partial}{\partial u }E_\sigma(u)+ \frac{\partial}{\partial \Theta } E(u,\Theta(u))\frac{\partial \Theta }{\partial u} = \frac{\partial}{\partial u } E_\sigma (u)
\end{equation*}
where the last equality is just \eqref{eqcal2}. As we do not know if $\Theta$ is $C^1$, we have the following
\begin{proposition}
 The energy $E_\sigma$ is $C^1\left (H^1_{rad};\R\right )$ and its derivative is given by
 \begin{equation}
     \label{enederi} E_\sigma '(u)v= \frac{1}{2}\left ( \int \nabla u\cdot \nabla v + 2\sigma \int uv - 2 \int uv \sin(2\Theta(u)) \right ) 
 \end{equation}
\end{proposition}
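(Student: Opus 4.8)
The plan is to establish $C^1$ regularity of $E_\sigma$ by computing the Gateaux derivative explicitly, proving it equals the claimed formula \eqref{enederi}, and then showing the Gateaux derivative is continuous from $H^1_{rad}$ to its dual, which upgrades Gateaux differentiability to Fréchet differentiability. The main technical obstacle, as the authors themselves flag, is that $\Theta$ is only known to be continuous (not $C^1$) as a map $H^1\to H^1$, so the naive chain-rule argument is unavailable; the key trick is to exploit the \emph{variational} characterization of $\Theta(u)$ from Lemma \ref{minan} — namely that $\Theta(u)$ minimizes $F_u$ — to control the error terms coming from the variation of the angle.

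First I would fix $u,v\in H^1_{rad}$ and $t\in\R$ small, write $u_t:=u+tv$, $\theta:=\Theta(u)$, $\theta_t:=\Theta(u_t)$, and expand
\begin{equation*}
  E_\sigma(u_t)-E_\sigma(u) = \big[E(u_t,\theta_t)-E(u_t,\theta)\big] + \big[E(u_t,\theta)-E(u,\theta)\big] + \tfrac{\sigma}{2}\big(\|u_t\|_{L^2}^2-\|u\|_{L^2}^2\big).
\end{equation*}
The second bracket is a polynomial perturbation in $u$ with $\theta$ held fixed, so a direct computation gives $\frac12\int\nabla u\cdot\nabla v + \smallO(t)$ worth of terms, plus the $t$-linear piece $-t\int uv\sin(2\theta)$; together with the last term this produces exactly the right-hand side of \eqref{enederi}, up to $O(t^2)$. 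The crux is showing the \emph{first} bracket is $\smallO(t)$. Here I would use that $\theta$ minimizes $F_{u}$ and $\theta_t$ minimizes $F_{u_t}$: since $F_{u_t}$ differs from $F_u$ only through the term $-2\int(|u_t|^2-|u|^2)\sin(2\psi)$, which is $O(t)$ uniformly in $\psi$ bounded in $H^1$, and since by Lemma \ref{minan} $\theta_t\to\theta$ strongly in $H^1$ as $t\to0$, one gets
\begin{equation*}
  0 \le F_u(\theta) - F_u(\theta_t) = \big(F_{u_t}(\theta_t)-F_u(\theta_t)\big) - \big(F_{u_t}(\theta)-F_u(\theta)\big) + \big(F_u(\theta)-F_{u_t}(\theta_t)\big) \le O(t)\cdot\smallO(1),
\end{equation*}
where $F_{u}(\theta)\le F_u(\theta_t)$ by minimality, $F_{u_t}(\theta_t)\le F_{u_t}(\theta)$ by minimality, and the displayed differences of $F_{u_t}-F_u$ evaluated at the (converging, hence bounded) arguments are $O(t)$ but with a coefficient $\int(\sin 2\theta_t - \sin 2\theta)(\ldots)$ that tends to $0$. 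Noting that $E(u_t,\theta_t)-E(u_t,\theta) = \big(F_{u_t}(\theta_t)-F_{u_t}(\theta)\big)/4$ and that this quantity is squeezed between two $\smallO(t)$ quantities by the above, the first bracket is $\smallO(t)$, which gives the Gateaux derivative.

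Finally, to promote this to $C^1$ (hence Fréchet differentiable), I would observe that the linear functional in \eqref{enederi} depends on $u$ only through $u$ itself (in the $L^2$ and $H^1$ pairings) and through $\sin(2\Theta(u))$; since $\Theta:H^1_{rad}\to H^1$ is continuous by Lemma \ref{minan}, and multiplication/composition with the bounded Lipschitz function $\sin(2\cdot)$ is continuous on $H^1\cap L^\infty$ — using here the radial Strauss embedding $H^1_{rad}\hookrightarrow L^\infty$ away from the origin together with the $H^2$-bound \eqref{0an} which gives $\Theta(u)\in L^\infty$ — the map $u\mapsto E_\sigma'(u)\in (H^1_{rad})^*$ is continuous, via Cauchy–Schwarz and the compact embeddings $H^1_{rad}\hookrightarrow L^p$. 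This continuity, combined with the Gateaux derivative formula, yields $E_\sigma\in C^1(H^1_{rad};\R)$ with the stated derivative. The one place needing care is the integrability of $uv\sin(2\Theta(u))$ and its continuity in $u$: this is handled by Hölder with $u,v\in L^4$ and $\sin(2\Theta(u))\in L^\infty$, the latter from \eqref{0an} and Sobolev embedding $H^2(\R^2)\hookrightarrow L^\infty$.
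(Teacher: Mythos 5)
Your route is correct in substance but genuinely different from the paper's. The paper proves Fréchet differentiability directly: it isolates the angle-variation term $I$, rewrites the angular Dirichlet energy using the Euler--Lagrange equation \eqref{eqcal2} satisfied by \emph{both} $\Theta(u)$ and $\Theta(u+v)$, and then invokes the Lipschitz estimate \eqref{scad} (the whole point of Lemmas \ref{lat}--\ref{leman}, which needs $u$ radial) to conclude $I=C_u\,O\!\left(\|v\|_{L^2}\|v\|_{L^4}+\|v\|_{L^4}^2\right)=o(\|v\|_{H^1})$. You instead compute only a Gateaux derivative, controlling the angle-variation bracket by a double-minimality squeeze that uses nothing beyond the variational characterization and the plain continuity of $\Theta$ from Lemma \ref{minan}, and then upgrade to $C^1$ (hence Fréchet) via continuity of $u\mapsto E_\sigma'(u)$ in the dual norm. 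What each buys: the paper's argument is quantitative (a remainder of quadratic size) but leans on the heavier machinery of the radial Lipschitz bound; yours is softer, bypasses \eqref{scad} and the Euler--Lagrange identity entirely for this proposition, but yields no rate and needs the standard ``continuous Gateaux derivative implies $C^1$'' theorem at the end.

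Two slips to fix in the write-up, neither fatal to the idea. First, your displayed chain is wrong as stated: minimality of $\theta=\Theta(u)$ gives $F_u(\theta)\le F_u(\theta_t)$, so $0\le F_u(\theta)-F_u(\theta_t)$ is backwards, and the three-bracket decomposition is not an exact identity (it misses an $O(t)$ term $F_u(\theta)-F_{u_t}(\theta)$, which is exactly of the size you are trying to beat). The correct squeeze, with the same ingredients you cite, is
\begin{equation*}
0\;\ge\;F_{u_t}(\theta_t)-F_{u_t}(\theta)\;=\;\bigl[F_u(\theta_t)-F_u(\theta)\bigr]+\bigl[(F_{u_t}-F_u)(\theta_t)-(F_{u_t}-F_u)(\theta)\bigr]\;\ge\;-2\int\bigl(|u_t|^2-|u|^2\bigr)\bigl(\sin 2\theta_t-\sin 2\theta\bigr),
\end{equation*}
and the right-hand side is $O(t)\cdot\|\sin 2\theta_t-\sin 2\theta\|_{L^2}=o(t)$ by H\"older ($u,v\in L^4$) and $\theta_t\to\theta$ in $H^1$. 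Second, the appeal to the Strauss embedding and to $L^\infty$ bounds for $\sin(2\Theta(u))$ in the continuity step is unnecessary: $|\sin(2\Theta(u))|\le 1$ pointwise, so continuity of $u\mapsto E_\sigma'(u)$ follows from $\|\Theta(u_n)-\Theta(u)\|_{L^2}\to 0$ and H\"older alone.
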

\begin{proof}
    We decompose the difference of the energy in $u+v$ and $u$ as
    \begin{align}
        \notag 4&(E_\sigma(u+v) - E_\sigma (u)) = \int |\nabla (u+v)|^2 - |\nabla u |^2 + \left (|u+v|^2 -|u|^2\right )\left(2\sigma - 2 \sin(2\Theta(u)\right) +\\ &+ 2\int |u+v|^2\left(\sin(2\Theta(u)) - \sin (2\Theta(u+v)\right) + 4 (E^+ (\Theta(u+v)) - 4E^+ (\Theta(u))) \label{aranc}
    \end{align}
    The first integral is known to be differentiable, thus dividing by $\|v\|$ and passing to the limit leads to the desired result by standard calculations. It only remains to consider the second line, which we relabel as $I$. $\Theta (u), \Theta (u+v)$ solve equation \eqref{eqcal2}; multiplying the sum of the two equations by $\Theta (u+v)- \Theta (u)$, we can rewrite the angular Dirichlet energy as
    \begin{align*}
        \lambda \int |\nabla &\Theta(u+v)|^2 - | \nabla \Theta(u)|^2 = -  q\int (\sin(2\Theta (u+v)) + \sin (2\Theta(u))) (\Theta(u+v)- \Theta(u))+\\ +&2\int \left ( |u|^2 \cos(2\Theta(u)) + |u+v|^2 \cos (2\Theta(u+v))\right )( \Theta(u+v)- \Theta(u) ) 
    \end{align*}
Using this equality Taylor expansions for the trigonometric functions infers for the remaining term in \eqref{aranc}
\begin{equation*}
 I = O\left (\|v\|_{L^2}\| \Theta(u+v)- \Theta(u) \|_{L^2} + \| \Theta(u+v)- \Theta(u) \|_{L^2}^2 \right)
\end{equation*}
By estimate \eqref{scad} the previous term reads
\begin{equation}
\label{mando}
     I \le  C_u O \left(\|v\|_{L^2}\| v \|_{L^4} + \| v \|_{L^4}^2 \right)
\end{equation}
which goes to zero faster than $\|v\|_{H^1}$.
\end{proof}
\begin{remark}
\label{scrib}
If $u$ satisfies $E'_\sigma(u)=0$, then the couple $\left(u,\Theta(u)\right )$ is a solution of the system \eqref{sigma}-\eqref{sig} with the fixed value of $\sigma$
\end{remark}
Now we turn to the definition of the Nehari manifold, and its main properties. 
\begin{proposition}
    Let $\mathcal{N}_\sigma$ be the Nehari manifold
    \begin{equation}
 \mathcal{N}_\sigma:= \left \{ 0 \neq u \in H^1_{rad} \ \biggm| \ \int |\nabla u |^2 + 2\sigma |u|^2 - 2 \sin(2\Theta (u))|u|^2\,dx =0 \right \}   \end{equation}
 Then the following hold:
 \begin{enumerate}
\item $ \mathcal{N}_\sigma$ is complete in $H^1_{rad}$
     \item There exists a continuous one to one correspondence $m: S^\sigma \to \mathcal{N}_\sigma$, for $S^\sigma $ defined as 
     \begin{equation}
         S^\sigma := \left \{ u \in H^1_{rad}; \ \|u\|_{H^1}=1 \ \biggm| \ \int |\nabla u |^2 - 2(1-\sigma) |u|^2 \,dx <0 \right \}
     \end{equation}
     The map is a local homeomorphism and its inverse is given by 
     \begin{equation}
         m^{-1}(w)= \frac{w}{\|w\|_{H^1}}
     \end{equation}
     \item There exists a $\delta>0$ such that $\|u\|_{H^1} \ge \delta$ for any $u \in \mathcal{N_\sigma}$
     \item For any compact subset $W \subset S^\sigma$, there exists a constant $c_W$ such that the image of $m(W)$ is contained in $B_{c_W}(0)$
     \end{enumerate}
     \label{Neah}
\end{proposition}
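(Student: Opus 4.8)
The plan is the standard fibering/Nehari analysis: for each direction $w$ on the unit $H^1_{rad}$-sphere study the map $t\mapsto E_\sigma(tw)$, show it has at most one critical point on $(0,\infty)$, and read off the four claims. The analytic engine will be a \emph{comparison principle} for $\Theta$, which I would prove first: if $u_1,u_2\in H^1$ with $|u_1|\le|u_2|$ a.e., then $\Theta(u_1)\le\Theta(u_2)$ a.e.; indeed $w:=(\Theta(u_1)-\Theta(u_2))^+$ tested against the subtracted equations \eqref{eqcal2} gives $-\lambda\Delta(\Theta(u_1)-\Theta(u_2))\le 0$ on $\{\Theta(u_1)>\Theta(u_2)\}$, because on $[0,\pi/4]$ the map $\theta\mapsto\sin 2\theta$ is increasing and $\theta\mapsto\cos 2\theta$ is nonnegative and decreasing, whence $\lambda\|\nabla w\|_{L^2}^2\le 0$ and $w\equiv 0$. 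Along the way I would also record the strict bounds: for $u\neq 0$, $\Theta(u)<\pi/4$ a.e. on $\R^2$ and $\Theta(u)>0$ a.e. on $\{u\neq 0\}$ — both follow because the Laplacian of an $H^2$ function vanishes a.e. on its level sets, so evaluating \eqref{eqcal2} on $\{\Theta(u)=\pi/4\}$ (resp. $\{\Theta(u)=0\}\cap\{u\neq0\}$) would force $q=0$ (resp. $|u|^2=0$).

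\textbf{Points (3) and (1).} For $u\in\mathcal{N}_\sigma$, using $\sin x\le x$, the bound $\|\Theta(u)\|_{L^2}\le C\|u\|_{L^4}$ from \eqref{0an}, Hölder and Gagliardo--Nirenberg, $\min(1,2\sigma)\|u\|_{H^1}^2\le 2\int\sin(2\Theta(u))|u|^2\le C'\|u\|_{H^1}^3$, hence $\|u\|_{H^1}\ge\delta:=1/C'>0$. Since $u\mapsto\int|\nabla u|^2+2\sigma|u|^2-2\int\sin(2\Theta(u))|u|^2$ is continuous on $H^1_{rad}$ (continuity of $\Theta:H^1\to H^1$, Remark \ref{das}), its zero set is closed, and intersecting with $\{\|u\|_{H^1}\ge\delta\}$ shows $\mathcal{N}_\sigma$ is closed in $H^1_{rad}$; this gives (3) and (1).

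\textbf{The fibering map, points (2) and (4).} Fix $w$ with $\|w\|_{H^1}=1$ and set, for $t>0$, $\psi_w(t):=\int|\nabla w|^2+2\sigma|w|^2-2\int\sin(2\Theta(tw))|w|^2=:A_w-2g_w(t)$, so that by the formula \eqref{enederi} just proved $\frac{d}{dt}E_\sigma(tw)=\tfrac{t}{2}\psi_w(t)$ and $tw\in\mathcal{N}_\sigma\iff\psi_w(t)=0$. I would show $g_w$ is \emph{strictly} increasing: comparing the two variational inequalities for the minimizers $\theta_i:=\Theta(t_iw)$ of $F_{t_iw}$ yields $(t_2^2-t_1^2)\big(S_w(\theta_2)-S_w(\theta_1)\big)\ge 0$ with $S_w(\theta):=\int|w|^2\sin 2\theta$; and if equality held one gets $\theta_1=\theta_2$ by uniqueness of the minimizer (Lemma \ref{minan}), then subtracting the Euler--Lagrange equations forces $\cos(2\theta_1)=0$ on the positive-measure set $\{w\neq 0\}$, contradicting $\Theta<\pi/4$ a.e. Next, $g_w(0^+)=0$ by continuity of $\Theta$, while $g_w(\infty)=\|w\|_{L^2}^2$: on any ball $B$ with $|w|\ge\varepsilon$, testing \eqref{eqcal2} for $\Theta(tw)$ against a fixed bump function and using $0\le\Theta(tw)\le\pi/4$ bounds the left side uniformly in $t$, forcing $\int_{B'}\cos(2\Theta(tw))\to 0$; covering $\{w\neq0\}\setminus\{0\}$ by such balls and using monotone convergence gives the claim. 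Hence $\psi_w$ is continuous and strictly decreasing from $A_w>0$ to $A_w-2\|w\|_{L^2}^2$, and a short computation (using $\|w\|_{H^1}=1$) identifies $A_w-2\|w\|_{L^2}^2<0$ with the condition $w\in S^\sigma$. Therefore: if $w\in S^\sigma$ there is a unique $t^\ast(w)>0$ with $t^\ast(w)w\in\mathcal{N}_\sigma$ and $E_\sigma(tw)$ strictly increases then strictly decreases; if $w\notin S^\sigma$ the ray $\R^+w$ misses $\mathcal{N}_\sigma$. Defining $m(w):=t^\ast(w)w$ gives a bijection $S^\sigma\to\mathcal{N}_\sigma$ whose inverse is $u\mapsto u/\|u\|_{H^1}$ (well defined and continuous on $\mathcal{N}_\sigma$ since $\|u\|_{H^1}\ge\delta$). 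For continuity of $m$ at $w$: $t^\ast(w_n)\ge\delta$ by (3), and $t^\ast(w_n)$ is bounded above since $t^\ast(w_n)\to\infty$ would give $\liminf g_{w_n}(t^\ast(w_n))\ge g_w(T)$ for all $T$, i.e. $\ge\|w\|_{L^2}^2>A_w/2$, impossible as $g_{w_n}(t^\ast(w_n))=A_{w_n}/2\to A_w/2$; any subsequential limit $\tau$ then satisfies $\tau w\in\mathcal{N}_\sigma$ by continuity of the Nehari functional, so $\tau=t^\ast(w)$ by uniqueness. Thus $m$ is a homeomorphism (in particular a local homeomorphism), which is (2); and (4) follows since $c_W:=\max_{w\in W}t^\ast(w)<\infty$ and $\|m(w)\|_{H^1}=t^\ast(w)\le c_W$.

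\textbf{Main obstacle.} The nontrivial part is the fibering analysis: the strict monotonicity of $g_w$ and the asymptotics $g_w(\infty)=\|w\|_{L^2}^2$, which are exactly what make the intersection of a ray with $\mathcal{N}_\sigma$ unique and nonempty precisely on $S^\sigma$. Because $\Theta$ is only known to be continuous — not $C^1$ — from $H^1$ to $H^1$, one cannot differentiate the fibering map and must instead extract these facts from the comparison principle together with the a priori bound $\Theta\le\pi/4$ and the strict positivity of $q$.
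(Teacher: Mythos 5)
Your proposal is correct, and it follows the same overall fibering-along-rays strategy as the paper, but several of the key steps are implemented differently. The paper studies $\alpha_w(r)=E_\sigma(rw)$: it gets existence of a critical point by comparing with a \emph{fixed} competitor angle $\phi$ (with $\sin(2\phi)$ close to $1$ on a large set), which drives $E_\sigma(rw)\le -cr^2+E^+(\phi)\to-\infty$, so an interior maximum exists; uniqueness is obtained by a two-sided application of the minimality of $\Theta$ plus subtraction of the Euler--Lagrange equations (essentially your strict monotonicity of $g_w$ in disguise); point (3) uses, as you do, the bound $\|\Theta(u)\|_{H^2}\le C\|u\|_{L^4}$ for small amplitudes; and point (4) is a uniform version of the same energy comparison over the compact set $W$, rather than your continuity of $t\mapsto t^\ast(w)$. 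What your route buys is a sharper quantitative picture: the limit $g_w(t)\to\|w\|_{L^2}^2$ (proved by testing \eqref{eqcal2} against bumps, or via your comparison principle) identifies exactly when a ray meets $\mathcal{N}_\sigma$, so existence, uniqueness, the characterization of $S^\sigma$, and a \emph{global} homeomorphism all come out of one monotone function, and (4) follows for free from continuity of $t^\ast$; the cost is that you must justify the $t$-uniform bound in the bump test (integrate by parts twice onto $\chi$ so that only $\|\Theta\|_{L^\infty}\le\pi/4$ and $\|\Delta\chi\|_{L^1}$ enter, since $\|\nabla\Theta(tw)\|_{L^2}$ is not uniformly bounded in $t$ — your appeal to $0\le\Theta\le\pi/4$ is the right fix), and you should note that the comparison principle is only needed, if at all, for the monotone-convergence step, which can equally be done by exhausting $\{w\neq 0\}$ with balls on which $|w|$ is bounded below (legitimate since radial $H^1$ functions are continuous away from the origin). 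Your level-set argument for $\Theta(u)<\pi/4$ a.e.\ replaces the paper's appeal to the argument of Lemma \ref{leman}, and is fine; the strict-monotonicity argument for $g_w$ (paired variational inequalities, uniqueness of the minimizer, subtraction of the two equations) is essentially the computation the paper uses to exclude an interval of critical points, so the two proofs converge there.
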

We omit hereafter the indices $\sigma$ for $S, \mathcal N$ without any risk of confusion, while we keep the subscript for the energy $E_\sigma$, in order not to confuse it with the energy $E$. 
\begin{proof}
Completeness of $\mathcal{N}$ follows from completeness of $H^1_{rad}$, continuity of $\Theta$ given by Lemma \ref{minan} and $0\notin \overline {\mathcal{N}}$. The last property is a consequence of point $3$ in the Proposition, whose proof does not rely on completeness.\\
Any $u \in \mathcal{N}$ satisfies 
\begin{equation*}
    \int |\nabla u |^2 - 2(1-\sigma) |u|^2 \,dx \le 0 
\end{equation*}
If it were an equality for some $u\in \mathcal N $, then it would imply $\Theta(u) \equiv \pi/4$ over all the support of $u$; but this cannot be as already commented in the proof of Lemma \ref{leman}. Hence, $u/\|u\| \in S$.\bigskip\\On the other hand, for $w \in S$, consider the function $\alpha_w(r) := E_\sigma( rw)$ for $r>0$, with derivative $\alpha'_w(r) = E_\sigma'(rw)w$. We claim that there exist a unique $r_w$ such that $ \alpha'_w(r_w)=0$ and hence $r_ww \in \mathcal N$; in particular $\alpha'_w(r)>0$ for $0<r<r_w$ and $\alpha'_w(r)<0$ for $r>r_w$. \\For $r \to 0$, we have $\alpha_w(r)\to 0$ and, since $\sigma $ is positive, $\alpha_w(r)$ must be positive for all $r< \tilde{a}$, $\tilde{a}$ as in Theorem \ref{min}. Moreover for $w \in S$ fixed, we can find an angle $\phi$ such that 
\begin{equation*}
    \int |\nabla w |^2 + 2\sigma |w|^2 - 2 \sin(2\phi)|w|^2\,dx =-c<0
\end{equation*}
For $r \to \infty$ therefore, by Lemma \ref{minan}, we have
\begin{equation}
\label{confo}
    \alpha_r(w) \le E(rw, \phi) + 2\sigma \|rw\|_{L^2}^2  \le -cr^2 + E^+ (\phi)
\end{equation}
and the right hand side goes to $-\infty$ for $r$ large. Hence there exists at least one $r_w$ such that $\alpha'_w(r_w)=0$.\\ By the minimality of $\Theta$ in
Lemma \ref{minan} we have that for any $r>0$, for any $\phi \in H^1$
\begin{equation*}
    \int |\nabla w|^2 +2(\sigma- \sin (2\Theta(wr)))|w|^2   + \frac{E^+(\Theta(wr))}{r^2} \le \int |\nabla w|^2 +2(\sigma  -  \sin (2\phi)|w|^2   + \frac{E^+(\phi)}{r^2}
\end{equation*}
Applying the relation twice, since $r_ww\in \mathcal{N}$
\begin{align*}
& \alpha_w(r)=  r^2\left( \int |\nabla w|^2 +2\sigma |w|^2 - 2 \sin (2\Theta(wr))|w|^2\right)   + E^+(\Theta(wr)) \le \\ & \le E^+(\Theta(wr_w)) \le r_w^2 \left( \int |\nabla w |^2 + 2\sigma |w|^2 - 2 \sin(2\Theta (wr))|w|^2\,dx\right ) + E^+(\Theta(wr))
\end{align*}
Rearranging terms we get that if $\alpha_w'(r)=\frac{r}{2} \int|\nabla w|^2 +2\sigma |w|^2 - 2 \sin (2\Theta(wr))|w|^2  $ is positive, then $r<r_w$, and if it is negative $r>r_w$. \\It only remains to exclude that $\alpha'_w(r)=0$ for $r$ in an interval $(r_1, r_2)$. But this follows easily from the uniqueness of $\Theta(u)$ as a minimizer: if $\alpha_w'(r_1)= \alpha'_w(r_2)= 0$, then $\Theta(r_1w) = \Theta(r_2w)$. Since the angles solve equation \eqref{eqcal2}, by taking the difference it would imply $0= (r_1^2 - r_2^2)\cos (2\Theta(r_1w))| w|^2$, impossible. \bigskip \\To prove the last points, we notice uniformly in $w \in S$, and for $r$ small we have 
\begin{equation*}
    \alpha_w'(r)= \frac{r}{2} \int|\nabla w|^2 +2\sigma |w|^2 - 2 \sin (2\Theta(wr))|w|^2 \ge \frac{r}{2} \int|\nabla w|^2 +2\sigma |w|^2 - 2O(r^\frac 3 2 )\|w\|_{L^4}^3
\end{equation*}
by estimate $\|\Theta(rw)\|_{H^1} \le C \|rw\|_{L^4} $ of Lemma \ref{minan} and Holder inequality. Hence exists a uniform bound $0<\delta \le r_w $ for $w \in S$. Finally, for any compact subset $W \subset S$ we have uniformly $ \int|\nabla w|^2 -2(1-\sigma) |w|^2 \le c_W <0  $, so that we can repeat the comparison done in \eqref{confo} to get $r_w \le C_W $ for any $w\in W$. 
\end{proof} 
The previous result allows us to apply an abstract result for Nehari manifolds, and to study the functional as if it were defined over $S$. In the following we denote, for $M$ a $C^1$ manifold in $H^1_{rad}$, $T_{M,p}$ as the tangent space to $M$ in $p \in M$.
\begin{lemma}
\label{sf}
    Consider the functional $\Psi : S \to \R$ defined as 
    \begin{equation*}
        \Psi ( w) = E_\sigma ( m(w)); \ \ w \in S
    \end{equation*}
where $m$ is the homeomorphism given in Proposition \ref{Neah}. Then the following hold: \begin{enumerate}
    \item $\Psi$ is $C^1$, and it holds
\begin{equation}
    \Psi'(w)(z) = \|m(w)\|  E_\sigma'(m(w))(z) \ \ \forall \ z \in T_{S,w}
    \label{deride}
\end{equation}
\item $w$ is a critical point for $\Psi $ if and only if $m(w) $ is a critical point for $E_\sigma$ over $\mathcal{N}$
\item If $u$ is a minimum point for $E_\sigma$ over $\mathcal{N}$, it satisfies $E'_\sigma(u)=0$
\end{enumerate} 
\end{lemma}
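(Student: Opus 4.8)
The plan is to run the standard Nehari-manifold scheme of \cite{Nene}, using as the only substitute for the missing $C^{2}$ regularity of $E_\sigma$ the fact, established in the proof of Proposition \ref{Neah}, that $m(w)$ is the \emph{unique maximiser} of $r\mapsto\alpha_w(r):=E_\sigma(rw)$ over $\{r>0\}$. Write $m(w)=r_w w$ with $r_w:=\|m(w)\|\ge\delta>0$; the map $w\mapsto r_w$ is continuous because $m$ is (Proposition \ref{Neah}), and bounded on compacts. From the maximality, for any $v,w\in S$,
\begin{equation*}
E_\sigma(r_w v)-E_\sigma(r_w w)\ \le\ \Psi(v)-\Psi(w)\ \le\ E_\sigma(r_v v)-E_\sigma(r_v w),
\end{equation*}
since $E_\sigma(r_w v)\le\max_{r>0}E_\sigma(rv)=\Psi(v)$ and $E_\sigma(r_v w)\le\max_{r>0}E_\sigma(rw)=\Psi(w)$, while $E_\sigma(r_w w)=\Psi(w)$ and $E_\sigma(r_v v)=\Psi(v)$.

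For point (1), fix $w\in S$, $z\in T_{S,w}$ and a $C^{1}$ curve $h\mapsto v_h\in S$ with $v_0=w$, $\dot v_0=z$. Since $h\mapsto r_w v_h$ is $C^{1}$ and $E_\sigma\in C^{1}$ with derivative \eqref{enederi}, the left-hand increment equals $r_w E_\sigma'(m(w))(z)\,h+o(h)$. For the right-hand increment, write it in integral form $\int_0^1 E_\sigma'\big(r_{v_h}(w+s(v_h-w))\big)\big(r_{v_h}(v_h-w)\big)\,ds$ and pass to the limit using $r_{v_h}\to r_w$, $v_h-w=zh+o(h)$, and the continuity of $E_\sigma'$ uniformly for $s\in[0,1]$; this yields again $r_w E_\sigma'(m(w))(z)\,h+o(h)$. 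Hence $\Psi$ is Gâteaux differentiable along $T_{S,w}$ with $\Psi'(w)(z)=\|m(w)\|\,E_\sigma'(m(w))(z)$; this expression is linear in $z$ and continuous in $w$ (continuity of $m$ and of $E_\sigma'$), so $\Psi\in C^{1}(S)$ and \eqref{deride} holds.

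For points (2) and (3), observe that $T_{S,w}=\{z\in H^1_{rad}:(z,w)_{H^1}=0\}$ is exactly the orthogonal complement of $m(w)=r_w w$ in $H^1_{rad}$, and that $\|m(w)\|\ge\delta>0$. Thus \eqref{deride} shows $\Psi'(w)=0$ if and only if $E_\sigma'(m(w))$ vanishes on $m(w)^{\perp}$. Combined with $E_\sigma'(m(w))(m(w))=0$ — which is precisely the defining relation of $\mathcal{N}$ — and the splitting $H^1_{rad}=\mathbb{R}\,m(w)\oplus m(w)^{\perp}$, this is equivalent to $E_\sigma'(m(w))=0$; since any nonzero free critical point of $E_\sigma$ automatically lies on $\mathcal{N}$, this is (2). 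For (3): if $u$ minimises $E_\sigma$ over $\mathcal{N}$, then $w_0:=u/\|u\|_{H^1}=m^{-1}(u)$ minimises $\Psi$ over $S$; as $S$ is relatively open in the unit sphere of $H^1_{rad}$ (it is cut out by a strict inequality of a continuous functional), $w_0$ is an interior minimum, so $\Psi'(w_0)$ vanishes on $T_{S,w_0}$, and (2) gives $E_\sigma'(u)=0$.

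The main obstacle is the passage to the limit in the right-hand increment of the sandwich: because $\Theta$, and hence $E_\sigma$, is not known to be twice differentiable, $m$ need not be $C^{1}$ and one cannot simply differentiate $w\mapsto m(w)$; the integral form of the increment circumvents this, requiring only the continuity of $E_\sigma'$ (the content of the $C^{1}$ statement, formula \eqref{enederi}) and of $w\mapsto r_w$. Everything else — the sandwich itself, the orthogonal decomposition, and the openness of $S$ — is routine bookkeeping.
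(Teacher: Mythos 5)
Your proposal is correct, and it is essentially the argument the paper points to: the paper gives no proof of this lemma but refers to \cite{Nene} (Proposition 9 and Corollary 10), and your sandwich based on the unique maximality of $r\mapsto E_\sigma(rw)$ at $r_w$, followed by the squeeze along a tangent curve, is exactly that scheme, which only needs $E_\sigma\in C^1$ and the properties of $m$ from Proposition \ref{Neah}. Your replacement of the usual mean-value step by the integral form of the increment (to avoid any differentiability of $w\mapsto m(w)$) is a sound implementation of the same idea, and the deduction of (2)--(3) from the splitting $H^1_{rad}=\R\, m(w)\oplus m(w)^{\perp}$ together with the Nehari identity $E_\sigma'(m(w))(m(w))=0$ is the standard conclusion.
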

For the proof we refer the reader to \cite{Nene}, Proposition 9 and Corollary 10.
\begin{remark}
\label{notte}
    In the variational problem, we notice that minimizing $E_\sigma$ over $\mathcal{N}$ is equivalent to minimize $E^+(\Theta(u)) $ over the same set, since the condition in the constraint is equivalent to a part of the energy.
\end{remark}
\begin{theorem}
\label{sigma fin}
Consider $0<\sigma<1$, and let $c$ be the infimum of the energy over the Nehari manifold:
\begin{equation*}
    c= \inf_{u \in \mathcal N_\sigma} E_\sigma(u) 
\end{equation*}
Then there exists a radial decreasing $v \in \mathcal{N}_\sigma $ such that $E_\sigma(v) = c$. The minimum $v$ and the angle $\phi= \Theta(v)$ are solutions of the ground state equations \eqref{sigma}-\eqref{sig} for the given value of $\sigma$. 
\end{theorem}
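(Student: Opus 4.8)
The plan is to realize the minimizer by the direct method on the constrained problem, moving freely between $\mathcal{N}_\sigma$ and the sphere $S^\sigma$ via Proposition~\ref{Neah} and Lemma~\ref{sf}, and then reading off the ground-state system from Lemma~\ref{sf}(3) and Remark~\ref{scrib}. First I would record that $0<c<+\infty$. Finiteness holds because $S^\sigma\neq\emptyset$ (a sufficiently spread-out radial profile satisfies $\int|\nabla w|^2<2(1-\sigma)\int|w|^2$) and $\mathcal{N}_\sigma=m(S^\sigma)$. Positivity uses that on $\mathcal{N}_\sigma$ one has $E_\sigma(u)=E^+(\Theta(u))$, the rest of the energy being exactly the Nehari constraint (Remark~\ref{notte}): if $E^+(\Theta(u_n))\to0$ then $\|\Theta(u_n)\|_{H^1}\to0$, hence $\|\Theta(u_n)\|_{L^2}\to0$, and the constraint $\|\nabla u_n\|_{L^2}^2+2\sigma\|u_n\|_{L^2}^2=2\int|u_n|^2\sin(2\Theta(u_n))\le 4\int\Theta(u_n)|u_n|^2$ together with Gagliardo--Nirenberg forces $\|u_n\|_{H^1}\to0$, contradicting the lower bound $\|u\|_{H^1}\ge\delta$ on $\mathcal{N}_\sigma$.

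Given a minimizing sequence $u_n\in\mathcal{N}_\sigma$, I would replace it by a radially decreasing one. Since $E_\sigma(u)=\min_{0\le\theta\le\pi/4}E(u,\theta)+\frac{\sigma}{2}\|u\|_{L^2}^2$, the Pólya--Szegő inequality together with Hardy--Littlewood applied to $\int|u|^2\sin2\theta$ (legitimate since $t\mapsto\sin2t$ is increasing on $[0,\pi/2]$) and the equimeasurability of $\int(1-\cos2\theta)$ give $E_\sigma((su)^\ast)\le E_\sigma(su)$ for every $s>0$; writing $w_n=u_n/\|u_n\|_{H^1}$ and noting $(sw_n)^\ast=s(w_n)^\ast$, this yields $\max_s E_\sigma(s(w_n)^\ast)\le\max_s E_\sigma(sw_n)=E_\sigma(u_n)$. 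Moreover $\int|\nabla(w_n)^\ast|^2<2(1-\sigma)\int|(w_n)^\ast|^2$, so $(w_n)^\ast\in S^\sigma$ and $m((w_n)^\ast)$ is radially decreasing with energy $\le c+o(1)$. Thus from now on the minimizing sequence may be assumed radially decreasing and nonnegative.

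The main obstacle is that $E_\sigma$ is not coercive on $\mathcal{N}_\sigma$, so one must establish compactness of the radially decreasing minimizing sequence at the level $c$, and this is where $0<\sigma<1$ enters decisively. From boundedness of $E^+(\Theta(u_n))$ the superlevel set $\{\Theta(u_n)\ge\varepsilon\}$ has measure bounded uniformly in $n$; since $u_n$ and $\Theta(u_n)$ are radially decreasing it is a ball of radius $\le R_\varepsilon$ independent of $n$. Splitting $u_n=a_n+b_n$ with $a_n:=u_n\mathbf{1}_{\{\Theta(u_n)\ge\varepsilon\}}$ supported in that fixed ball and $b_n$ the tail, on which $\sin2\Theta(u_n)<\sin2\varepsilon$, the Nehari identity gives $\|\nabla u_n\|_{L^2}^2+(2\sigma-2\sin2\varepsilon)\|b_n\|_{L^2}^2\le 2(1-\sigma)\|a_n\|_{L^2}^2$; choosing $\varepsilon$ with $\sin2\varepsilon<\sigma$ makes the left-hand side coercive. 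It then remains to rule out concentration of the core $a_n$ and escape of $L^2$ mass to infinity — equivalently, a strict loss of $\|\nabla\cdot\|_{L^2}^2+2\sigma\|\cdot\|_{L^2}^2$ in the weak limit — which I would do by comparing $E_\sigma(u_n)$ with the energy of an explicit competitor obtained by truncating $u_n$ and reprojecting onto $\mathcal{N}_\sigma$: such a competitor would otherwise have energy strictly below $c$. I expect this energetic comparison, exploiting the competition between $+2\sigma\int|u|^2$ and $-2\int|u|^2\sin(2\Theta(u))$, to be the delicate technical heart of the proof.

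Once the minimizing sequence is strongly precompact, extract $u_n\to u$ in $H^1_{rad}$; in particular $u_n\to u$ in $L^p$ for $2<p<\infty$ by the compact radial embedding. By Lemma~\ref{minan} (its proof only uses $L^4$-convergence), \eqref{0an} and \eqref{scad} (the latter valid since the limit is radial), $\Theta(u_n)\to\Theta(u)$ strongly in $H^1$, so $E^+(\Theta(u_n))\to E^+(\Theta(u))$ and $\int|u_n|^2\sin2\Theta(u_n)\to\int|u|^2\sin2\Theta(u)$; hence $E^+(\Theta(u))=c$, and $u\not\equiv0$ since otherwise $\|\Theta(u_n)\|_{H^2}\le C\|u_n\|_{L^4}\to0$ would give $E^+(\Theta(u_n))\to0\neq c$. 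Passing the Nehari identity to the limit and using strong convergence, $\|\nabla u\|_{L^2}^2+2\sigma\|u\|_{L^2}^2=2\int|u|^2\sin2\Theta(u)$, i.e. $u\in\mathcal{N}_\sigma$ and $E_\sigma(u)=c$; $u$ is radially decreasing by construction. Finally Lemma~\ref{sf}(3) gives $E_\sigma'(u)=0$, whence by Remark~\ref{scrib} the pair $(u,\Theta(u))$ solves \eqref{sigma}-\eqref{sig} with the prescribed $\sigma$, and its smoothness follows from the elliptic bootstrap of Proposition~\ref{decay}.
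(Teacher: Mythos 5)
Your overall architecture is legitimate and genuinely different from the paper's: you replace the paper's first move (producing a Palais--Smale minimizing sequence via Ekeland's variational principle) by symmetrizing the minimizing sequence fiberwise, using $E_\sigma(m(w))=\max_{s>0}E_\sigma(sw)$ together with P\'olya--Szeg\H{o}/Hardy--Littlewood, and your proofs that $0<c<\infty$, that the limit is nontrivial, and the final passage through Lemma \ref{sf}(3) and Remark \ref{scrib} are all sound. The paper instead symmetrizes only at the very end and carries the extra information $E_\sigma'(u_n)\to 0$ through the whole argument.

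The genuine gap is the compactness step, which you announce but do not prove, and which is exactly where the paper does its hardest work. Your splitting gives
\begin{equation*}
\|\nabla u_n\|_{L^2}^2+(2\sigma-2\sin 2\varepsilon)\,\|b_n\|_{L^2}^2\le (2-2\sigma)\,\|a_n\|_{L^2}^2,
\end{equation*}
which only bounds the full $H^1$ norm by the core mass $\|a_n\|_{L^2}^2$, and nothing so far prevents that core mass from diverging; so boundedness of $u_n$ in $H^1$, let alone strong subsequential convergence, is still entirely open at that point. The sentence ``truncate $u_n$ and reproject onto $\mathcal{N}_\sigma$, such a competitor would otherwise have energy strictly below $c$'' is not an argument: after truncation the reprojection parameter $r$ from Proposition \ref{Neah} is not controlled, and a \emph{strict} drop of $E^+(\Theta(\cdot))$ below $c$ requires quantitative information on the behaviour of $\Theta(u_n)$ along the sequence. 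In the paper this information is extracted precisely from the discarded PS property: the vanishing derivative is what turns the constraint inequality \eqref{papapa} into an equality via \eqref{pepe}, giving strong convergence of the normalized sequence $v_n=u_n/\|u_n\|_{H^1}$; the expansion \eqref{asimp} combined with the minimality of $\Theta(u_n)$ then forces the limit $v$ to have compact support with $\phi\equiv\pi/4$ there; and only with this structure does the dilated, rearranged competitor $v_s(x)=v^*(sx)$, reprojected through \eqref{r1}--\eqref{r3}, yield $E^+(\Theta(r_{v_s}v_s))<c$ and the contradiction that rules out unboundedness (the same PS input is reused for the bounded case). Having traded the Ekeland/PS device for rearrangement, you have no substitute for this chain, so the heart of the theorem --- boundedness and precompactness of the minimizing sequence at level $c$ --- remains unproven in your proposal; either reinstate a PS-type sequence or supply a concrete competitor estimate that achieves the strict inequality.
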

Before proving the Theorem, we claim the existence of a minimizing sequence with vanishing derivative
\begin{lemma}
    There exists a minimizing sequence $u_n \in \mathcal N$ for $E_\sigma $ such that $E_\sigma'(u_n)\to 0$. 
\end{lemma}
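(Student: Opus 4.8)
The plan is to obtain the desired sequence by Ekeland's variational principle applied to the reduced functional $\Psi$ of Lemma~\ref{sf} on the $C^1$ manifold $S$. Recall from Lemma~\ref{sf} and Remark~\ref{notte} that $\inf_{\mathcal{N}}E_\sigma=\inf_{S}\Psi=:c$, that $\Psi$ is of class $C^1$, and that on $\mathcal{N}$ one has $E_\sigma(u)=E^+(\Theta(u))\ge 0$; using in addition the Nehari constraint together with $\|u\|_{H^1}\ge\delta$ from Proposition~\ref{Neah}(3) and the Gagliardo--Nirenberg inequality, a short computation shows $c>0$.

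The only real difficulty is that $S$, being an open subset of the unit sphere of $H^1_{rad}$, is not complete, so Ekeland's principle cannot be invoked on $S$ directly. First I would show that near-minimizing sequences for $\Psi$ stay uniformly away from $\partial S$. Indeed, if $w\in S$ approaches $\partial S$, i.e. $\int|\nabla w|^2-2(1-\sigma)|w|^2\uparrow 0$, then by the characterization of $r_w:=\|m(w)\|_{H^1}$ in the proof of Proposition~\ref{Neah} one has $r_w\to\infty$; the equation \eqref{eqcal2} then forces $\Theta(r_w w)\to\pi/4$ on the set $\{w\neq 0\}$ (whose measure is bounded below along such a sequence, since $\|w\|_{H^1}=1$), so that $\Psi(w)=E^+(\Theta(r_ww))$ eventually exceeds $c$. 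Consequently there is $\varepsilon>0$ such that $\{w\in S:\Psi(w)<c+1\}\subset S_\varepsilon:=\{w\in\overline{S}:\int|\nabla w|^2-2(1-\sigma)|w|^2\le-\varepsilon\}$, and $S_\varepsilon$ is a closed subset of the unit sphere — hence a complete metric space — which is contained in $S$; moreover, by the comparison \eqref{confo} used in Proposition~\ref{Neah} one has $r_w\le C_\varepsilon$ uniformly on $S_\varepsilon$, so $\delta\le\|m(w)\|_{H^1}\le C_\varepsilon$ there.

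Now I would apply Ekeland's variational principle to $\Psi$ on the complete metric space $S_\varepsilon$, obtaining $w_n\in S_\varepsilon$ with $\Psi(w_n)\to c$ and $\|\Psi'(w_n)\|_{T^*_{S_\varepsilon,w_n}}\to 0$. Since $\Psi(w_n)\to c$, for $n$ large $w_n$ lies in the relative interior of $S_\varepsilon$ in $S$, so $T_{S_\varepsilon,w_n}=T_{S,w_n}=\{z\in H^1_{rad}:(z,w_n)_{H^1}=0\}$, and therefore $\Psi'(w_n)(z)\to 0$ uniformly over $z\in T_{S,w_n}$ with $\|z\|_{H^1}\le 1$. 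Set $u_n:=m(w_n)\in\mathcal{N}$, so that $E_\sigma(u_n)=\Psi(w_n)\to c$ and $(u_n)$ is a minimizing sequence. By formula \eqref{deride}, for $z\perp w_n$ in $H^1$ one has $E_\sigma'(u_n)(z)=\Psi'(w_n)(z)/\|u_n\|_{H^1}$, which tends to $0$ uniformly because $\delta\le\|u_n\|_{H^1}\le C_\varepsilon$. It remains to pass from the tangent space to all of $H^1_{rad}$: given an arbitrary $z\in H^1_{rad}$, write $z=z^{\perp}+(z,w_n)_{H^1}w_n$ with $z^\perp\in T_{S,w_n}$; since membership $u_n\in\mathcal{N}$ is exactly the identity $E_\sigma'(u_n)(u_n)=0$ and $w_n=u_n/\|u_n\|_{H^1}$, we get $E_\sigma'(u_n)(w_n)=0$, whence $E_\sigma'(u_n)(z)=E_\sigma'(u_n)(z^{\perp})$ with $\|z^\perp\|_{H^1}\le\|z\|_{H^1}$. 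Thus $\|E_\sigma'(u_n)\|_{(H^1_{rad})^*}\to 0$, which proves the Lemma.

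The step I expect to be the main obstacle is the second paragraph: quantifying how $\Psi$ fails to be near its infimum as $w\to\partial S$, i.e. controlling the degeneration of the minimal angle $\Theta(rw)$ and hence of the angular energy $E^+$ under the scaling $r\to\infty$. This is precisely the place where the structure of \eqref{eqcal2} and the restriction $0<\sigma<1$ (implicit through Proposition~\ref{Neah}) come into play; everything else is a routine application of Ekeland's principle and of the correspondence $m$.
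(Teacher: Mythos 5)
Your plan diverges from the paper's at the exact point you flag as the main obstacle, and that point is a genuine gap, not a technicality. The claim that the sublevel set $\{w\in S:\Psi(w)<c+1\}$ stays uniformly away from $\partial S$ amounts to excluding that the infimum $c$ is approached along sequences with $\|m(w_n)\|_{H^1}\to\infty$; this is precisely the hard scenario which the paper only disposes of later, in Theorem \ref{sigma fin}, and only for Palais--Smale minimizing sequences (the condition $E_\sigma'(u_n)\to 0$ is used there, via \eqref{pepe}, to get strong convergence of $v_n=u_n/\|u_n\|_{H^1}$, and the case of a nonzero weak limit is killed by the rearrangement/rescaling construction \eqref{r1}--\eqref{r3}, which compares against $c$). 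In your sketch none of the intermediate claims is proved: that $w\to\partial S$ forces $r_w\to\infty$; that then $\Theta(r_ww)\to\pi/4$ on $\{w\neq 0\}$; and, most importantly, that this makes $\Psi(w)$ exceed $c$ --- this last implication does not follow even granting the others, since all it yields is a positive lower bound of order $q\,|\{w\neq 0\}|$ for $E^+\bigl(\Theta(r_ww)\bigr)$, with no comparison to $c$. In addition, the uniform bound $r_w\le C_\varepsilon$ on your set $S_\varepsilon$ cannot be extracted from \eqref{confo}: that comparison needs an angle $\phi$ with controlled $E^+(\phi)$ adapted to $w$, which is why Proposition \ref{Neah} assumes compactness of $W$; $S_\varepsilon$ is closed but not compact (think of radial $w$ with mass escaping to infinity), so the constant is not uniform. (This upper bound is actually not needed in your final step, where only $\|u_n\|_{H^1}\ge\delta$ enters, but it shows the reduction to $S_\varepsilon$ is not under control.)

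The obstacle you identified --- non-completeness of $S$ --- is exactly what the paper sidesteps: Ekeland's variational principle is applied directly to $E_\sigma$ on $\mathcal{N}$, which is complete by Proposition \ref{Neah}(1) and on which $E_\sigma\ge 0$. The resulting inequality $E_\sigma(u_\varepsilon)\le E_\sigma(u)+\varepsilon\|u-u_\varepsilon\|_{H^1}$ for all $u\in\mathcal{N}$ is then tested along $u=m(\gamma(t))$, with $\gamma$ a $C^1$ curve in $S$ through $w_\varepsilon=m^{-1}(u_\varepsilon)$, giving $\|\Psi'(w_\varepsilon)\|_{T'_{S,w_\varepsilon}}\le C\varepsilon\|u_\varepsilon\|_{H^1}$; combined with \eqref{deride} and with $E_\sigma'(u_\varepsilon)(w_\varepsilon)=0$ from the Nehari constraint, this yields $E_\sigma'(u_\varepsilon)\to 0$. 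Your final paragraph (splitting $H^1_{rad}=T_{S,w}\oplus\R w$ and using $\|u_n\|_{H^1}\ge\delta$) coincides with the paper's endgame; what is missing is a proof of the boundary-repulsion property, and supplying one would be comparable in difficulty to the compactness analysis of Theorem \ref{sigma fin} itself, so as written the proposal does not establish the Lemma.
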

\begin{proof}
    $\mathcal{N}$ is complete and $E_\sigma$ is positive over $\mathcal{N}$; by Ekeland's variational principle (see Theorem 4.1 of \cite{Eke}), for any $\varepsilon>0$ there exist $u_\varepsilon\in \mathcal N$ such that
\begin{align}
    \notag &E_\sigma(u_\varepsilon) \le c+ \varepsilon;\\
    \\&E_\sigma(u_\varepsilon) \le E_\sigma(u) + \varepsilon \|u-u_\varepsilon\|_{H^1} \ \ \ \forall \ u \in \mathcal{N}
    \label{Pipi}
\end{align}
By Proposition \ref{Neah}, $u_\varepsilon= m (w_\varepsilon)$ with $w_\varepsilon\in S$. For any $z\in T_{S,w_\varepsilon}$, consider a $C^1$ curve $\gamma(t) \in S$ such that $\gamma(0) = w_\varepsilon$, $\dot{\gamma}(0)=z$. Inserting $u=m(\gamma(t))$ in inequality \ref{Pipi} we have
\begin{equation*}
\Psi(w_\varepsilon) - \Psi(\gamma(t)) \le \varepsilon\| u_\varepsilon- u\|_{H^1} \le C\varepsilon\|u_\varepsilon\|_{H^1} \|w_\varepsilon- \gamma(t)\|
\end{equation*}
The term $\| u_\varepsilon\|$ on the right hand side comes from the scaling of the norm for the local homeomorphism $m$. Now passing to the limit for $t\to 0$ it follows
\begin{equation*}
    \Psi'(w_\varepsilon)z \le C \varepsilon \|u_\varepsilon\|_{H^1}\|z\|_{H^1}
\end{equation*}
The inequality holds for any $z \in T_{S,w_\varepsilon}$, and passing to the supremum we get 
\begin{equation*}
    \frac{\|\Psi '(w_\varepsilon)\|_{T'_{S,w_\varepsilon}}}{\|u_\varepsilon\|} \le C \varepsilon
\end{equation*}Notice that for any $w_\varepsilon\in S$, $H^1_{rad}= T_{S,w_\varepsilon} \bigoplus \R w_\varepsilon$, and that $E_\sigma'(u_\varepsilon) w_\varepsilon =0$ because of the Nehari constraint. At this point the thesis follows by \eqref{deride}.
\end{proof}
\begin{proof}[Theorem \ref{sigma fin}]
    Let $u_n\in \mathcal N$ be the minimizing sequence satisfying $E'_\sigma (u_n) \to 0$. If $u_n$ is unbounded, then up to subsequences $v_n := u_n /\|u_n\|_{H^1}\rightharpoonup v$. If the weak limit were $0$ then for any $r>0$, for $\varepsilon_n \to 0$
    \begin{equation}
    \label{jjj}
      c+\varepsilon_n \ge E_\sigma(u_n) = E_\sigma(r_{v_n}v_n) \ge E_\sigma(r v_n) \ge  \sigma r^2 \|v_n\|_{H^1}^2 - r^2 \int v_n^2 \sin(2\Theta(rv_n))
    \end{equation}
By compact embedding $H^1_{rad}(\R^2) \hookrightarrow L^p(\R^2)$ for any $2<p<\infty$, we have $v_n^2 \to 0 $ strongly in $L^{\frac{3}{2}}$. Similarly, for $r$ fixed, $\Theta(v_nr)$ is bounded in $H^1_{rad}$ and converges strongly in $L^3$ up to subsequences. Hence the integral in the right hand side converges to $0$, and we reach a contradiction by choosing $r$ large enough.\bigskip \\
If $v\neq 0$, by remark \ref{notte} $E^+( \Theta(u_n))\le c+ \varepsilon_n, \ \varepsilon_n \to 0$; up to subsequences, $\Theta (u_n) \rightharpoonup \phi$ in $H^1_{rad}$. Moreover the constraint equality which defines $\mathcal {N}$ passes to the limit as an inequality by weak convergence: 
\begin{equation}
\label{papapa}
    \int |\nabla v_n|^2 + 2\sigma |v_n|^2 - 2\int |v_n|^2 \sin (2\Theta( u_n)) = 0 \ge  \int |\nabla v|^2 + 2\sigma |v|^2 - 2\int |v|^2 \sin (2\phi) 
\end{equation}
As before, the negative integral converges because of the compact embeddings of $H^1_{rad}$. \\We can prove that the above inequality is actually an equality, and hence the convergence $v_n \to v$ is strong in $H^1$. In fact, by the Palais Smale condition, we have 
\begin{equation}
    0= \lim_{n} \frac{4E'(u_n) v}{\|u_n\|} =   \int \nabla v_n \cdot \nabla v + 2\sigma \int v_n v - 2\int v_n v \sin (2\Theta( u_n))
    \label{pepe}
\end{equation}
The integral on the right hand side converges by weak convergence to the right hand side of equation \eqref{papapa}.\\We have the following asymptotic for the energy for large $\|u_n\|_{H^1}$:
\begin{equation}
\label{asimp}
    \frac{E_\sigma(u_n) }{\|u_n\|_{H^1}^2} = \int |\nabla v|^2 + 2\sigma |v|^2 - 2|v|^2 \sin (2\Theta(u_n)) + \frac{E^+ (\Theta(u_n))}{\|u_n\|_{H^1}^2} + o(1 ) 
\end{equation}
The last term represents the difference between $v$ and $v_n$, and goes to $0$ with $n$ by strong convergence. By this explicit formulation of the leading term in the expansion, in the limit we expect $\Theta(u_n)$ to maximize the negative contribution of the energy, $\int|v|^2 \sin (2\Theta_n)$.\\Recall again $\Theta(u_n)$, bounded in $H^1$, minimizes the energy for $u_n$ fixed. We claim that $v$ has compact support, and $\phi\equiv \pi/4$ on the support of $v$. If this were not true, we could take a positive $\alpha \in H^1$ such that $\theta := \alpha + \phi $ verifies $\theta \le \frac{\pi }4$ and the set \begin{equation*}
     \left \{ \theta> \phi \right \} \cap \left \{ v>0\right \}    
\end{equation*} has positive measure.
Then for $C>0$ we have
\begin{equation*}
    2\int |v|^2 \sin (2\phi) +2C   \le  2\int |v|^2 \sin (2\theta) 
\end{equation*}
This implies definitively in $n$, for a certain $o_n(1) \to 0 $ for $n \to \infty$
\begin{equation*}
    2\int |v|^2 \sin (2\Theta(u_n)) +C=2\int |v|^2 \sin (2\phi) +C + o_n(1)   \le  2\int |v|^2 \sin (2\theta) 
\end{equation*}
But this, combined with \eqref{asimp}, contradicts the minimality of $\Theta(u_n)$ for $n$ large. 
\\By weak convergence we have $E(\phi) \le \lim E^+(\Theta(u_n)) = c$; but we can prove explicitly that such a $\phi$ cannot have minimal energy. Consider $v^*, \phi^*$ symmetric decreasing rearrangements of $v,\phi$. By Pólya–Szegö inequality and properties for symmetric rearrangements, we have $E^+(\phi^*) \le c$. Moreover it remains $\phi^*\equiv \pi/4$ over the support of $v^*$, and $v^*$ still verifies
\begin{equation*}
\int |\nabla v^*|^2 - 2(1-\sigma) |v^*|^2 \le 0 
\end{equation*}
Define $R>0$ by $B_R(0)= \mathrm{spt}v^*$; by continuity, there exists a $\varepsilon>0$ such that $\sin (2\phi^*) \ge 1/2 $ on $B_{R(1+\varepsilon)}(0)$. We look at the rescaled function $v_s(x):= v^*(sx)$. We have 
\begin{equation*}
    \int |\nabla v_s|^2 - 2(1-\sigma) |v_s|^2=\int |\nabla v^*|^2 - \frac{2(1-\sigma)}{s^2} |v^*|^2 \le  \frac{2(1-\sigma)(s^2-1)}{s^2} \int|v^*|^2
\end{equation*}
so that $v_s/\|v_s\|$ belongs to $S$ for any $s<1$. For $s<1$ sufficiently close to $1$ it holds
\begin{equation*}
 \int |\nabla v_s|^2 + 2\sigma |v_s|^2 - 2|v_s|^2 \sin (2\phi^*)< 0
\end{equation*}
Fixed such $s$, by Proposition \ref{Neah} there exists a $r=r_{v_s}$ such that $rv_s\in \mathcal{N}$, i.e.
\begin{equation}
\label{r1}
    \int |\nabla v_s|^2 + 2\sigma |v_s|^2 - 2|v_s|^2 \sin (2\Theta(r_{v_s}v_s)) = 0 
\end{equation}
Again by Lemma \ref{minan}, we have 
\begin{equation}
\label{r2}
    E_\sigma(r_{v_s}v_s) \le E(r_{v_s}v_s, \phi^*) + 2\sigma\|r_{v_s} v_s\|_{L^2}^2
\end{equation}
but since 
\begin{equation}
   \int |\nabla v_s|^2 + 2(\sigma - \sin (2\phi^*))|v_s|^2 <
   \int |\nabla v_s|^2 + 2(\sigma - \sin (2\Theta(r_{v_s}v_s))) |v_s|^2 = 0 
   \label{r3}
\end{equation}
this would imply $E^+(\Theta( r_{v_s}v_s))< E^+ (\phi^*) \le c$, absurd by definition of $c$.\bigskip
\\Hence the sequence $u_n$ is bounded, and up to subsequences converges to $u$. Repeating the calculations as in \eqref{jjj}, we deduce $u\neq 0$. Proceeding as in \eqref{papapa}- \eqref{pepe}, we deduce strong convergence to $u\in \mathcal N$. Finally, by Lemma \ref{minan} we have that $u$ is the minimal point.
\\If $u$ is not symmetric, the radial rearrangement $u^*, \phi^*= \left (\Theta(u)\right ) ^*$ would satisfy 
\begin{equation*}
     \int |\nabla u^*|^2 + 2\sigma\int |u^*|^2 - 2\int \sin (2\phi^*))|u^*|^2 < 0; \ \ E^+(\phi^*) \le E^+(\Theta(u))
\end{equation*}
Following the same steps as in \eqref{r1}, \eqref{r2} and \eqref{r3} for a certain $r>0$, $ru^* \in \mathcal N$ and it verifies $E^+(\Theta(ru^*)) < E^+(\Theta(u))= c$, absurd.
\end{proof}
At this point the statement of Theorem \ref{teosig} follows from Theorem \ref{sigma fin}, the last point of Lemma \ref{sf} and Remark \ref{scrib}.
\section{Decaying rate}\label{secdec}
In this section we prove Proposition \ref{decay}. The proof is an adaptation to the coupled system of a standard method, see \cite{BL1}, Lemma 4.2.. The argument is not immediately transparent, as it is involves the use of several auxiliary functions and their associated ODE's; nonetheless once the method is implemented, the proof becomes simple.
\\We recall the following decaying property for radially decreasing functions.
\begin{lemma}
\label{le1}
    Let $f\in L^2\left (\R^2\right)$ be a radial decreasing function. Then there exist $C>0$ such that for any $r$
    \begin{equation*}
        |f(r)|\le \frac{C}{r}\|f\|_{L^2}
    \end{equation*}
\end{lemma}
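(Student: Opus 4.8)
The plan is to read the bound off directly from monotonicity; this is the two‑dimensional instance of the classical radial (Strauss) lemma. First I would record a harmless preliminary reduction: if $f$ is radial, nonincreasing in $r$, and lies in $L^2(\R^2)$, then $f(r)\to 0$ as $r\to\infty$ (otherwise $|f(r)|$ would stay bounded below by a positive constant for all large $r$, forcing $\int_0^\infty |f(r)|^2 r\,dr=\infty$), and since a nonincreasing function tending to $0$ is nonnegative, $f\ge 0$ everywhere, so $|f|=f$ is itself nonincreasing. In the applications of this Lemma $f$ is one of $v,\phi\ge 0$, so this point is automatic anyway. A radial monotone function is continuous off an at most countable set, so $f(r)$ is unambiguously defined for all but countably many $r$, and it suffices to prove the inequality at those radii.

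For such an $r$ and any $x$ with $|x|\le r$, monotonicity gives $f(x)\ge f(r)\ge 0$, hence $|f(x)|^2\ge |f(r)|^2$ pointwise on the ball $B_r(0)$. Integrating,
\[
\|f\|_{L^2(\R^2)}^2 \;\ge\; \int_{B_r(0)} |f(x)|^2\,dx \;\ge\; |f(r)|^2\,\big|B_r(0)\big| \;=\; \pi r^2\, |f(r)|^2 .
\]
Taking square roots yields $|f(r)|\le \pi^{-1/2}\,r^{-1}\,\|f\|_{L^2}$, i.e. the claim with $C=\pi^{-1/2}$; extending from the continuity radii to every $r$ (for whichever one‑sided‑continuous representative one fixes) is immediate by monotonicity.

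There is essentially no obstacle here. The only points worth keeping an eye on are that the constant $\pi$ is the area of the unit disk, so the estimate is specific to $\R^2$ (the general $\R^d$ version carries the exponent $d/2$), and the innocuous measure‑theoretic remark about selecting the monotone representative noted above.
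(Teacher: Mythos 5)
Your argument is correct and is exactly the standard one: the paper does not prove this lemma itself but cites Cazenave (Lemma 1.7.3), whose proof is precisely the comparison $\|f\|_{L^2}^2 \ge \int_{B_r(0)}|f|^2 \ge \pi r^2 |f(r)|^2$ that you give, together with the same routine remarks on sign and the monotone representative. So your proposal matches the intended proof, with the explicit constant $C=\pi^{-1/2}$.
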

For the proof see \cite{Caz}, Lemma [1.7.3].\\
\begin{proof}[Proof of Proposition \ref{decay}]
By hypothesis, $(v, \phi)$ are $H^1$ solutions of
\begin{equation}
\label{sno}
        -\Delta v =2v\sin (2\phi) - 2\sigma v; \ \ -\Delta \phi = -q\sin (2\phi) + 2|v|^2 \cos(2\phi)
\end{equation}
The nonlinear terms on the right hand side of each equation are smooth functions depending only on $v, \phi$. By standard elliptic estimates, we can bootstrap higher regularities and infer $v, \phi \in H^k$ for any $k$. \\
We prove the decay estimate for $v$. By Sobolev embeddings $v\in C^2$, and we can write equation \eqref{sno} as an ordinary differential equation in the radial variable
\begin{equation*}
    -v_{rr} -\frac{1}{r} v_r = 2v(r)\sin (2\phi(r)) - 2\sigma v(r)
\end{equation*}
By standard computation (see \cite{BL1}) the function $w(r):= r^{1/4}v^2(r) $ satisfies
\begin{equation}
\label{a1}
    w_{rr}= \left (r^{\frac{1}{2}}v \right )_r^2 + \left [ 2\sigma - 2\sin(2\phi(r)) - \frac{1}{r^2}\right ]w(r) 
\end{equation}
For $r >r_0$ by Lemma \ref{le1} we have 
\begin{equation}
\label{a_2}
    2\sigma - 2\sin(2\phi(r)) - \frac{1}{r^2} \ge \sigma 
\end{equation}
$w\ge 0$ by definition, and $w_{rr}\ge \sigma w$ for $r>r_0$. \\We look then at the last auxiliary function
$z(r)= e^{-\sqrt{\sigma} r}(w_r + \sqrt{\sigma} w)$, that satisfies $z_r\ge 0$ on $(r_0,\infty)$. \\If for some $r_1>r_0$, $z(r_1) >0$, then $w +w_r$ is not integrable in $(r_1, \infty)$ since
\begin{equation*}
    w_r + \sqrt{\sigma} w \ge z(r_1)e^{\sqrt{\sigma}r} 
\end{equation*}
But this cannot happen by definition of $w$: since $v \in H^1(\R^2)$ we have
\begin{equation*}
    \int_{r_1}^\infty rv^2(r) + \int_{r_1}^\infty rv_r^2 < \infty
\end{equation*}
and hence $w_r, w$ are integrable. 
\\ For $r\in (r_0, \infty)$ we must have then $z(r)\le 0$. By definition of $z$ this implies $(e^{\sqrt{\sigma}r} w(r))_r =e^{\sqrt{\sigma}r} z(r)\le 0$, and hence $w(r) \le C e^{-\sqrt{\sigma}r} $. Again by definition of $w$ we have
\begin{equation*}
    |v(r)|\le C r^{-\frac{1}{2}}e^ {-\sqrt{\sigma}r} \ \ \ \forall \ r\ge r_0
\end{equation*}
We turn now to the estimate for $\phi$, which we get by a barrier method. For $r$ large and $\varepsilon<q $ small it holds in $B_r^C(0)$
\begin{equation*}
    -\Delta \phi + 2(q-\varepsilon) \phi \le 2|v|^2 \cos (2\phi)
\end{equation*}
Let $\psi:= e^{-\beta |x|}$ with $\beta < \min\left \{ \sigma, \sqrt{q- \varepsilon}\right \} $. For $C>0$ to be fixed, by straightforward computation and for $r$ large enough
\begin{equation*}
    - \Delta ( \phi - C \psi) + (q-\varepsilon) (\phi - C\psi) \le  2 |v|^2 - C\left ( q- \varepsilon - \beta^2  -\frac{1}{|x|} \right ) \psi  \le 2|v|^2 - Cc_\beta \psi
\end{equation*}
for a positive $c_\beta$. Definition of $\beta $ and exponential decay of $v$ imply there exists $R, C$ large enough such that in $B_R^C(0)$
\begin{equation*}
     - \Delta ( \phi - C \psi) + (q-\varepsilon) (\phi - C\psi) \le 0
\end{equation*}
Moreover we can pick $C$ even larger and get $(\phi - C \psi)(R) \le 0$. By maximum principle we have therefore the desired exponential decay
\begin{equation*}
    \phi(r) \le C e^{-\beta |x|} \ \ \ \ \forall \ |x|  > R
 \end{equation*}
\end{proof}\bigskip \textbf{Acknowledgments}        \bigskip \\
The author has been partially supported by the Basque
Government through the BERC 2022-2025 program and IKUR program, by the project PID2023-146764NB-I00 funded by MICIU/AEI/10.13039/
501100011033 and cofunded by the European Union, and by the Spanish State Research Agency through
BCAM Severo Ochoa CEX2021-001142. \\The author warmly thanks the anonymous referees, as their
patient and careful reading of previous versions has been of significant help for the preparation of the
present work, and A. Zarnescu, for proposing the problem and his help during the elaboration.

\end{document}